\documentclass[11pt]{amsart}
\usepackage{tikz}
\usetikzlibrary{arrows.meta}
\usepackage{esint}
\usepackage{verbatim}
\usepackage{graphicx}
\usepackage{subfigure}
\usepackage{amsfonts}
\usepackage{amssymb}
\usepackage{amsmath}
\usepackage{amsthm}
\usepackage{latexsym}
\usepackage{amscd}
\usepackage{mathrsfs}
\usepackage{verbatim}
\usepackage{mathrsfs}
\usepackage{enumitem}
\usepackage{braket}
\usepackage{float}
\usepackage[margin=1.3in]{geometry}
\usepackage{comment}
\usepackage{bm}
\usepackage{xcolor}
\usepackage{imakeidx}
\makeindex[title=Index of Notation]

\usepackage{hyperref}
\hypersetup{
	colorlinks=true,
	linkcolor=black,
	urlcolor=black,
	citecolor=black
}
\usepackage{caption}
\usepackage{array}

\usepackage{amssymb}
\usepackage{pdfpages}
\usepackage{verbatim}

\usepackage{appendix}
\usepackage{xspace}

\newtheorem{theorem}{Theorem}[section]

\newtheorem{lemma}[theorem]{Lemma}
\newtheorem{proposition}[theorem]{Proposition}
\newtheorem{corollary}[theorem]{Corollary}
\theoremstyle{definition}
\newtheorem{definition}[theorem]{Definition}

\theoremstyle{remark}
\newtheorem{remark}[theorem]{Remark}

\makeatletter

\newcommand{\Rmnum}[1]{\expandafter\@slowromancap\romannumeral #1@}
\makeatother

\newcommand{\ID}{\mathbb{D}}

\newcommand{\IN}{\mathbb{N}}
\newcommand{\IR}{\mathbb{R}}

\numberwithin{equation}{section}

\makeatletter
\@namedef{subjclassname@2020}{\textup{2020} Mathematics Subject Classification}
\makeatother

\subjclass[2020]{ 42B20 58J35 46E35}

\begin{document}
	\title[Covariant  Riesz transforms ]{The Covariant Riesz Transforms on Riemannian Manifolds}
	
\author{Yongheng Han}
\address{School of Mathematical Science, University of Science and Technology of China,  Hefei City,  Anhui Province 230026}

\email{hyh2804@mail.ustc.edu.cn}

\author{Bing Wang}
\address{ Institute of Geometry and Physics, and School of Mathematical Sciences, University of Science and Technology of China, Hefei 230026, China; Hefei National Laboratory, Hefei 230088, China}

\email{topspin@ustc.edu.cn}

	\date{\today}

	\keywords{Singular integral, Riesz transforms, Calder\'on-Zygmund inequalities.}
	
	\begin{abstract}
		We establish the $L^p$-boundedness of the local covariant Riesz transform for differential forms on manifold $M$ with bounded $\|Rm\|$. Let $\Delta_j$ be the Hodge Laplace operator on $j$-forms. For any $p \in (1, \infty)$ and $\kappa>\kappa_0$, we show that the operator $\nabla (\Delta_j + \kappa)^{-1/2}$ is bounded on $L^p(M)$. Consequently, we obtain Calderón-Zygmund estimates for manifolds with bounded Riemannian curvature.
	\end{abstract}

	\maketitle
	\tableofcontents
	\section{Introduction}
	
	
	 In the Euclidean space $\mathbb{R}^n$, the Riesz transforms are defined as the collection of operators $R_j = \partial_j (-\Delta)^{-1/2}$. The Euclidean theory culminated in the Calderón-Zygmund framework, which characterizes Riesz transforms as singular integral operators. Within this setting, $R_j$ are identified as convolution operators with kernels $K(x) = c_n x_j |x|^{-(n+1)}$, whose $L^p$-boundedness is established via Fourier-analytic methods and real-variable theory. 
	
	The investigation of Riesz transforms on general Riemannian manifolds was initiated by Robert S. Strichartz. Extensive research has been conducted on Riesz transforms in the setting of manifolds, yielding a substantial body of literature and significant theoretical advancements. For the Riesz transform of functions, Strichartz \cite{Str83} proved the $L^p$ boundedness of the Riesz transform $\nabla(-\Delta)^{-1/2}$ for rank-one symmetric spaces (including hyperbolic space $\mathbb{H}^n$). He bypassed the heat kernel difficulties by using the wave equation method and spectral theory, utilizing the fact that the spectrum of the Laplacian on these spaces is bounded away from zero (the ``spectral gap''). Bakry \cite{Bak85} used the $\Gamma_2$-calculus to prove that on manifolds with non-negative Ricci curvature, the Riesz transform is $L^p$-bounded. Li \cite{Li91} showed that the Riesz transform on a complete manifold with non-negative Ricci curvature is of weak type $(1,1)$. Furthermore, Coulhon and Duong \cite{CD99} provided positive results for $1 < p \leq 2$ under the doubling volume property and optimal on-diagonal heat kernel estimates. For the case $p > 2$, we refer to \cite{Aus04, CD01, CD03}.
	
	 Let $\Delta_j$ be the Hodge Laplace operator on $j$-forms, $e^{-t\Delta_j}$ the associated heat semigroup, and $H_j(x,y,t)$ the corresponding heat kernel on $M$. In \cite{Bak85}, Bakry demonstrated 
	\begin{theorem}\label{Bak}\cite{Bak85}\label{thm1.6}
		Assume that $\|Rm\|\leq \Lambda_0$, then there exists a $\kappa_0=\kappa_0(\Lambda_0,n)>0$ such that for all $\kappa\geq \kappa_0$ and $j=\{0,\cdots,n\}$ the operators $d_j(\Delta_j+\kappa )^{-1/2}$ and $d_{j-1}^*(\Delta_j +\kappa)^{-1/2}$ are weak $(1,1)$. For every $p\in (1,+\infty)$, one has
		\begin{equation*}
			\bigg\|d_j(\Delta_j+\kappa)^{-1/2}\bigg\|_{p,p}<\infty,\quad \bigg\|d^*_{j-1}(\Delta_j+\kappa)^{-1/2}\bigg\|_{p,p}<\infty,
		\end{equation*}
		with norm bounds depending on $n,p,\Lambda_0, \kappa$. Here, $d_j$ denotes the exterior derivative on $j$-form, $d^*_{j-1}$ its formal adjoint.
	\end{theorem}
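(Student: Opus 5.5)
The strategy is to follow Bakry's semigroup approach: obtain a pointwise domination of the semigroup on forms by a scalar heat semigroup, and then run Littlewood--Paley--Stein $g$-function arguments. The weak $(1,1)$ part will come from a Calder\'on--Zygmund decomposition adapted to the local geometry.

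The first step is the Weitzenb\"ock formula $\Delta_j = \nabla^*\nabla + \mathcal{R}_j$. Since $\|Rm\|\le\Lambda_0$, the curvature term satisfies $|\mathcal{R}_j|\le C(n)\Lambda_0$. Kato's inequality then gives the pointwise domination
\begin{equation*}
|e^{-t\Delta_j}\omega|\le e^{C(n)\Lambda_0 t}\, e^{-t\Delta_0}|\omega| .
\end{equation*}
Choosing $\kappa_0> C(n)\Lambda_0$, the shifted semigroups $e^{-t(\Delta_j+\kappa)}$ are exponentially decaying. They are also dominated by a scalar submarkovian semigroup, so they are bounded on every $L^p$.

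For $p\ge 2$, the plan is to use the Stein $g$-functions
\begin{equation*}
g(\omega)^2=\int_0^\infty t\,|\nabla_{x,t} P_t\omega|^2\,dt,
\end{equation*}
where $P_t=e^{-t(\Delta_j+\kappa)^{1/2}}$ is the subordinated Poisson semigroup. One also needs the vertical analogue for $d$ and $d^*$. The following steps are carried out in order.

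\begin{enumerate}[label=(\roman*)]
\item Prove the upper bound $\|g(\omega)\|_p\lesssim\|\omega\|_p$ for $p\geq 2$. The approach is to compute $(\partial_t^2-\Delta_0)|P_t\omega|^p$ and to control the curvature terms in the Bochner identity by $\kappa$. This is Bakry's $\Gamma_2$ argument: bounded curvature yields $\Gamma_2\ge -K\Gamma$, and the $+\kappa$ shift absorbs $K$.
\item Prove the reverse inequality $\|\omega\|_{p'}\lesssim \|g(\omega)\|_{p'}$ by duality.
\item Establish the pointwise inequality $g_{d}( (\Delta_j+\kappa)^{1/2}\omega)\lesssim g^*$. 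This uses the commutation $dP_t = P_t^{(j+1)} d$ together with the domination above, which converts the Riesz transform bound into $g$-function bounds.
\end{enumerate}

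Writing $d_j(\Delta_j+\kappa)^{-1/2}$ and its adjoint, duality then transfers the $L^p$ bounds to $d^*_{j-1}$ and to $p<2$.

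For the weak $(1,1)$ estimate, the operator is viewed as a singular integral whose kernel is
\begin{equation*}
K(x,y)=\int_0^\infty t^{-1/2}\, d_x H_j(x,y,t)\, e^{-\kappa t}\,dt .
\end{equation*}
Bounded curvature gives uniform local geometry: volume doubling at scales $\le 1$ and Gaussian bounds for $H_j$ and $\nabla H_j$ for $t\le 1$. The factor $e^{-\kappa t}$ controls large times, because volume growth is at most $e^{C\sqrt{\Lambda_0}r}$ and $\kappa$ is taken large compared to this exponential rate. One then performs a local Calder\'on--Zygmund decomposition at scales $\le 1$ and verifies the H\"ormander condition
\begin{equation*}
\int_{d(x,y)>2d(y,y')}|K(x,y)-K(x,y')|\,dx\le C .
\end{equation*}

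\textbf{Main obstacle.} I expect this last verification to be the hardest step. It needs gradient estimates for the form heat kernel in the second variable that hold uniformly under only $|Rm|$ bounds, without any injectivity radius assumption. The first thing I would try is the standard trick of using $L^2$ off-diagonal (Davies--Gaffney) estimates for $\sqrt{t}\,d e^{-t\Delta_j}$ in place of pointwise bounds. This follows the Coulhon--Duong approach, which requires only doubling at small scales and Gaussian upper bounds of the heat kernel, both available locally from the curvature bound via the domination above.
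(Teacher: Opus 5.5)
\textbf{No proof in the paper to compare against; gap in the $g$-function branch.} The paper does not prove this statement. It quotes it from Bakry and uses it as an input. Inside the paper, the $L^p$ part would also follow from Theorem~\ref{thm1.4} via the pointwise bound $|d\omega|+|d^*\omega|\le C(n)|\nabla\omega|$, and the weak $(1,1)$ part from Step 2 of the proof of Theorem~\ref{thm6.2}. So your proposal has to stand on its own, and its Littlewood--Paley--Stein branch does not work as designed.

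In step (i), take $u=P_t\omega$ and $\kappa\ge\|V_j\|_\infty$. The computation you describe gives
\begin{equation*}
(\partial_t^2-\Delta_0)|u|^2=2|\nabla_{x,t}u|^2+2\langle (V_j+\kappa)u,u\rangle\ge 2|\nabla_{x,t}u|^2,
\qquad (\partial_t^2-\Delta_0)|u|^p\ge c_p|u|^{p-2}|\nabla_{x,t}u|^2 .
\end{equation*}
Stein's argument then bounds $g(\omega)^2\le c_p^{-1}(\sup_t|u|)^{2-p}\int_0^\infty t(\partial_t^2-\Delta_0)|u|^p\,dt$ and finishes with H\"older and the maximal theorem. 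That step needs $2-p\ge 0$. So this computation gives $\|g(\omega)\|_p\lesssim\|\omega\|_p$ only for $1<p\le 2$, not for $p\ge 2$.

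Appealing to ``$\Gamma_2\ge -K\Gamma$'' does not close the gap. Subharmonicity of $|\nabla_{x,t}P_t\omega|$, with the full covariant derivative of a $j$-form, requires commuting $\nabla$ past $\Delta_j=\nabla^*\nabla+V_j$. That commutation produces terms $\nabla Rm * P_t\omega$, which $\|Rm\|\le\Lambda_0$ does not control. This is exactly the obstruction the covariant transform of Theorem~\ref{thm1.4} has to overcome.

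Your bookkeeping is also off in two places:
\begin{itemize}
\item To use (iii) at exponent $p$, you need the reverse vertical inequality $\|\eta\|_p\lesssim\|V(\eta)\|_p$ for $(j+1)$-forms at that same $p$, not at $p'$.
\item Duality only pairs $d_j(\Delta_j+\kappa)^{-1/2}$ at $p$ with its adjoint $d_j^*(\Delta_{j+1}+\kappa)^{-1/2}$ at $p'$. It cannot produce $d$ at $p<2$.
\end{itemize}

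\textbf{Repair via your Coulhon--Duong branch.} The cleaner repair is already in your proposal. Its ingredients are:
\begin{itemize}
\item the Gaussian bound for $|H_j|$, which comes from your domination;
\item weighted $L^2$ bounds for $\nabla_xH_j$, or directly the pointwise bound of Theorem~\ref{thm4-3};
\item a local Calder\'on--Zygmund decomposition at scales $\le 1$;
\item the factor $e^{-\kappa t}$, which makes the $\int_1^\infty$ part bounded on $L^1$ by Schur's test against the $e^{C\sqrt{\Lambda_0}r}$ volume growth.
\end{itemize}
This gives weak $(1,1)$ for $\nabla(\Delta_j+\kappa)^{-1/2}$, hence for both $d_j(\Delta_j+\kappa)^{-1/2}$ and $d^*_{j-1}(\Delta_j+\kappa)^{-1/2}$. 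As you say, it avoids the $y$-regularity that a H\"ormander condition would need. Marcinkiewicz interpolation with the $L^2$ bound $\|d\omega\|_2^2+\|d^*\omega\|_2^2=\langle\Delta_j\omega,\omega\rangle$ gives $1<p\le 2$. The adjoint identity above then gives $p\ge 2$, and steps (i)--(iii) become unnecessary.

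\textbf{Repair if you keep a $g$-function route.} Then build the square function from $dP_t\omega$, $d^*P_t\omega$ and $\partial_tP_t\omega$ rather than from $\nabla P_t\omega$. Because $dP^{(j)}_t=P^{(j+1)}_td$ and $d^*P^{(j)}_t=P^{(j-1)}_td^*$, each of these solves the Poisson equation in degree $j+1$, $j-1$ or $j$. The displayed identity in those degrees, together with Kato's inequality, makes $\bigl(|dP_t\omega|^2+|d^*P_t\omega|^2+|\partial_tP_t\omega|^2\bigr)^{1/2}$ subharmonic once $\kappa\ge\max_i\|V_i\|_\infty$. That uses only $\|Rm\|\le\Lambda_0$. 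It is why the statement concerns $d$ and $d^*$ rather than $\nabla$.
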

     If $j=0$, then $d_j(\Delta_j+\kappa)^{-1/2}=\nabla (-\Delta+\kappa)^{-1/2}$.  Theorem \ref{thm1.6} can be viewed as  a generalization of the Riesz transform on the functions.
     
     Let $\nabla$ denote the Riemannian gradient.
     The covariant Riesz transform of forms on $M$ is defined by 
	\begin{equation*}
		\mathcal{R}_j := \nabla \Delta_j^{-1/2} = \Gamma\left(\frac{1}{2}\right)^{-1} \int_0^\infty t^{-1/2} \nabla e^{-t\Delta_j} dt.
	\end{equation*}
	We can also define the local version of the covariant Riesz transform by
	\begin{equation*}
		\begin{split}
			\nabla (\Delta_j+\kappa)^{-1/2} = \Gamma\left(\frac{1}{2}\right)^{-1} \int_0^\infty t^{-1/2} e^{-\kappa t}\nabla e^{-t\Delta_j} dt,
		\end{split}
	\end{equation*}
	for any $\kappa\geq 0$. Bakry \cite{Bak85} established the boundedness of local version of the covariant Riesz transform for Einstein manifolds with bounded curvature. In \cite{DT01},  Driver-Thalmaier use martingale methods to give Bismut type derivative formulas for differentials and co-differentials of heat semigroups on forms, and more generally for sections of vector bundles. Using their formula, Thalmaier-Wang \cite{TW04} obtained derivative estimates for various heat semigroups on Riemannian vector bundles. As an
application, they established the weak (1, 1) property for a class of Riesz transforms on a vector bundle(e.g., differential forms). In \cite[Proposition 4.18]{GP15}, G\"uneysu-Pigola proved the boundedness of the covariant Riesz transform for $1<p<2$ under some assumptions of curvature and volume. Baumgarth, Devyver, and G\"{u}neysu formulated a conjecture regarding the boundedness of the local covariant Riesz transform and proved it for $1 < p \leq 2$. 
	
	\begin{theorem}\cite{BDG23}\label{BDG23}\label{thm1.1}
		Let $M$ be an $n$-manifold with $\|Rm\|+\|\nabla Rm\| \leq \Lambda_0$. Then for every $p \in (1, 2]$, there exists a constant $\kappa_0 = \kappa_0(n,\Lambda_0, p)$ such that for $\kappa \geq \kappa_0$ and $j \in \{0, \dots, n\}$, one has $\|\nabla (\Delta_j + \kappa)^{-1/2}\|_{p,p} < \infty$, with a norm depending only on $n, p, \Lambda_0$ and $\kappa$.
	\end{theorem}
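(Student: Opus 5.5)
We prove Theorem \ref{BDG23} by reducing the covariant Riesz transform to a heat-semigroup gradient estimate and an $L^p$ bound for a Littlewood--Paley type square function. For $p\le 2$ this can be done without Calder\'on--Zygmund kernel estimates.

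\textbf{Step 1 (pointwise gradient domination).} By the Weitzenb\"ock formula, $\Delta_j = \nabla^*\nabla + \mathcal{R}$, where $\mathcal{R}$ is a zeroth-order curvature term with $|\mathcal{R}|\le C(n)\Lambda_0$. Differentiating the heat flow, $\nabla e^{-t\Delta_j}\omega$ satisfies a heat equation for the Bochner-type Laplacian on $T^*M\otimes\Lambda^j$, with lower-order terms $\mathcal{R}\nabla$ and $(\nabla\mathcal{R})$. These are controlled by $\|Rm\|+\|\nabla Rm\|\le\Lambda_0$; this is where $\|\nabla Rm\|$ enters. A Bochner computation for $|\nabla \omega_t|^2$ together with Kato's inequality should give, for $\omega_t=e^{-t\Delta_j}\omega$,
\begin{equation*}
|\nabla \omega_t| \le e^{Ct}\, e^{-t\Delta_0}\big(|\nabla \omega|\big) + C\,e^{Ct}\,e^{-t\Delta_0}\big(|\omega|\big).
\end{equation*}
Here $\Delta_0$ is the scalar Laplacian, and domination by scalar semigroups is used. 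One can run this in a probabilistic (Driver--Thalmaier) or a PDE form. A more useful variant is the Bismut-type bound $|\nabla \omega_t|\le C t^{-1/2}e^{Ct}\big(e^{-t\Delta_0/2}|\omega|^2\big)^{1/2}$ for $t\le1$.

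\textbf{Step 2 (duality/square-function reduction).} Write
\begin{equation*}
\nabla(\Delta_j+\kappa)^{-1/2}\omega = c\int_0^\infty t^{-1/2}e^{-\kappa t}\nabla e^{-t\Delta_j}\omega\,dt.
\end{equation*}
The factor $e^{-\kappa t}$ with $\kappa>\kappa_0\gg C$ absorbs the $e^{Ct}$ growth, so large times are harmless. For $p\le2$ I would follow the Coulhon--Duong scheme. First prove $L^2$ boundedness, which is immediate: integrating the Weitzenb\"ock formula gives $\|\nabla\omega\|_2^2\le\langle(\Delta_j+C\Lambda_0)\omega,\omega\rangle$. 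Then prove a weak $(1,1)$ bound and interpolate. For the weak $(1,1)$ bound, use the Calder\'on--Zygmund decomposition adapted to the semigroup (Duong--McIntosh), replacing the Calder\'on--Zygmund kernel regularity by off-diagonal integrated gradient estimates
\begin{equation*}
\int_{d(x,y)>\sqrt{t}}|\nabla_x H_j(x,y,s)|\,e^{-\kappa s}\,dx\le C s^{-1/2}e^{-c t/s}.
\end{equation*}
These follow from Gaussian bounds for $|H_j|$ (from domination by the scalar heat kernel, using bounded geometry) combined with a weighted $L^2$ Davies--Gaffney estimate for $\nabla e^{-s\Delta_j}$, itself derived from the energy identity of Step 1.

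\textbf{Main obstacle.} Since $\mathrm{Ric}$ is only bounded below, no global volume doubling is available. All estimates must therefore be localized to scales $\le1$, where bounded curvature gives uniform local doubling via Bishop--Gromov, and the large-scale part must be killed by $e^{-\kappa t}$. The term to worry about is the exponential volume growth $e^{\sqrt{(n-1)\Lambda_0}\,r}$ against the Gaussian decay. I would first choose $\kappa_0$ so that $e^{-\kappa s}e^{-r^2/Cs}e^{Cr}$ is integrable uniformly; completing the square shows that $\kappa_0\sim C\Lambda_0$ suffices. I would then check that the truncated Calder\'on--Zygmund decomposition, using only balls of radius $\le1$, still closes the weak $(1,1)$ argument. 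The dependence of $\kappa_0$ on $p$ enters through interpolation constants and through the $L^p$ version of the Step 1 domination.
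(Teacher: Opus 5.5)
Your outline is correct and follows the same architecture the paper uses for this range. The paper only quotes the statement from Baumgarth--Devyver--G\"uneysu. Its own proof of $1<p\le 2$, under the weaker hypothesis $\|Rm\|\le\Lambda_0$, is Steps~1--2 of Theorem~\ref{thm6.2}, and it proceeds as you propose:
\begin{itemize}
\item $L^2$ boundedness from the Weitzenb\"ock formula (Lemma~\ref{lm6.1});
\item a localized Coulhon--Duong weak $(1,1)$ argument, with $e^{-\kappa t}$ absorbing the $e^{Ct}$ factors and the exponential volume growth;
\item Marcinkiewicz interpolation.
\end{itemize}

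The only real difference is where the gradient input comes from. The paper uses the pointwise bound $|\nabla_x H_j(x,y,t)|\le C_1t^{-1/2}V_x^{-1}(\sqrt t)\,e^{C_2\Lambda_0t}e^{-d^2(x,y)/(C_2t)}$ of Theorem~\ref{thm4-3}. That bound is obtained by lifting to $T_xM$ and applying parabolic $W^{2,1}_p$ estimates in harmonic coordinates, and it needs only $\|Rm\|\le\Lambda_0$. Your Step~1 instead uses the commutator identity $(\partial_t+\nabla^*\nabla)\nabla\omega_t=Rm*\nabla\omega_t+\nabla Rm*\omega_t$ and a Bismut-type bound. That is where you spend $\|\nabla Rm\|$, and it is essentially the route of Baumgarth--Devyver--G\"uneysu.

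However, your Step~2 never actually needs Step~1. The weighted estimate $\int_M|\nabla_xH_j(x,y,s)|^2e^{\beta d^2(x,y)/s}\,dx\le Ce^{Cs}/(sV_y(\sqrt s))$ follows from a plain integration by parts. It uses only $\nabla^*\nabla=\Delta_j-V_j$, $|V_j|\le C\Lambda_0$, and Gaussian bounds for $H_j$ and $\partial_sH_j$ (the latter from analyticity of the self-adjoint semigroup). Cauchy--Schwarz and local volume comparison then give your integrated off-diagonal bound.

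So your argument already works under $\|Rm\|\le\Lambda_0$ alone, which matches the paper's result. Also, because it interpolates between weak $(1,1)$ and $L^2$, the resulting $\kappa_0$ does not depend on $p$. Your closing remark about a $p$-dependent $\kappa_0$ is therefore unnecessary.
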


	Inspired by the work of Wang \cite{Wan03}, Li-Wang \cite{LW06}, and Pigola \cite{Pig20}, we establish the following:
	
	\begin{theorem}[\textbf{Main result}]
    \label{thm1.4}
		Let $M$ be an $n$-manifold with $\|Rm\| \leq \Lambda_0$. Then for every $p \in (1, \infty)$, there exists a constant $\kappa_0 = \kappa_0(n, \Lambda_0, p)$ such that for $\kappa \geq \kappa_0$ and $j \in \{0, \dots, n\}$, one has $\|\nabla (\Delta_j + \kappa)^{-1/2}\|_{p,p} < \infty$, with a norm depending only on $n, p, \Lambda_0$ and $\kappa$.
	\end{theorem}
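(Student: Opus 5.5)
Since the statement has to hold under only $\|Rm\|\le\Lambda_0$, with no control on $\nabla Rm$ and no injectivity radius bound, the plan is to make every estimate local and then pass to covering spaces. For $x\in M$, on the ball $B(x,r_0)$ with $r_0=r_0(n,\Lambda_0)$ we pull back by the exponential map to the Euclidean ball $B_{r_0}(0)\subset T_xM$, or use harmonic coordinates there. In these coordinates the pulled-back metric is uniformly equivalent to Euclidean, it is $C^{1,\alpha}$ (indeed $W^{2,q}$ for every $q$, by Anderson's harmonic radius estimates under bounded curvature), and the volume is uniformly comparable. Local operators are insensitive to this lift, so they can be estimated upstairs, where the geometry is uniformly controlled, and summed over a uniformly locally finite covering of $M$.

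The operator is split as $\nabla(\Delta_j+\kappa)^{-1/2}=c\int_0^\infty t^{-1/2}e^{-\kappa t}\nabla e^{-t\Delta_j}\,dt=T_{\le 1}+T_{>1}$, according to $t\le r_0^2$ and $t>r_0^2$.

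\textbf{Large times.} For $T_{>1}$ we use the Bochner–Weitzenböck formula $\Delta_j=\nabla^*\nabla+\mathcal{R}_j$ with $|\mathcal{R}_j|\le C(n)\Lambda_0$, and Kato's inequality, to get the domination $|e^{-t\Delta_j}\omega|\le e^{C\Lambda_0 t}e^{t\Delta}|\omega|$. The heat kernel satisfies Gaussian bounds with volume growth at most $e^{C\sqrt{\Lambda_0}\,r}$ (Bishop–Gromov). A local gradient estimate, that is, parabolic interior estimates applied in the lifted balls to the system $(\partial_t+\Delta_j)\omega=0$, gives
\[
|\nabla H_j(x,y,t)|\lesssim e^{Ct}\,t^{-1/2}\,V(x,\sqrt t)^{-1}e^{-d^2/(5t)}.
\]
Only $C^{1,\alpha}$ control of the metric is needed here, so the zeroth-order curvature term $\mathcal{R}_j$ with $L^\infty$ coefficients is harmless. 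Choosing $\kappa_0$ large absorbs both the factor $e^{Ct}$ and the exponential volume growth. Schur's test then shows $T_{>1}$ is bounded on every $L^p$.

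\textbf{Small times.} For $T_{\le1}$ we decompose further by a cutoff $\chi(d(x,y))$ supported in $d<r_0$, using the kernel bounds above. The off-diagonal part is Schur-bounded, again with $\kappa$ large. The near-diagonal part, lifted to a chart, is a Calderón–Zygmund singular integral. Equivalently, and more robustly, we argue that for $\omega$ supported in a unit ball, $u=(\Delta_j+\kappa)^{-1/2}\omega$ can be compared with $\Delta_j u$ via the elliptic operator $\Delta_j$, written in harmonic coordinates as $g^{ab}\partial_a\partial_b+$ lower order, whose leading coefficients are $C^{\alpha}$. Two ingredients do the work:
\begin{itemize}
\item local $L^p$ Calderón–Zygmund estimates, $\|\nabla^2 u\|_{L^p(B)}\lesssim\|\Delta_j u\|_{L^p(2B)}+\|u\|_{L^p(2B)}$, valid for continuous leading coefficients (this is where the Pigola / Wang / Li–Wang approach enters);
\item interpolation together with Theorem~\ref{Bak}.
\end{itemize}
Concretely, since $d$ and $d^*$ Riesz transforms are bounded by Theorem~\ref{Bak}, and $\nabla$ differs from $d\oplus d^*$ only by a first-order elliptic estimate (Gaffney-type: $\|\nabla\omega\|_p\lesssim\|d\omega\|_p+\|d^*\omega\|_p+\|\omega\|_p$ locally, by $W^{1,p}$ theory for the first-order elliptic system $d+d^*$ with $C^{\alpha}$ coefficients in harmonic coordinates), we get
\[
\|\nabla(\Delta_j+\kappa)^{-1/2}\omega\|_p\lesssim\|d(\cdot)\|_p+\|d^*(\cdot)\|_p+\|(\Delta_j+\kappa)^{-1/2}\omega\|_p.
\]
Each term on the right is bounded, since $(\Delta_j+\kappa)^{-1/2}$ is $L^p$-bounded by the semigroup domination. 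This route actually handles the whole operator at once.

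\textbf{Main obstacle.} The hard step is making the local $L^p$ estimate for $d+d^*$ global and uniform. This requires the covering argument: lift to $B_{r_0}\subset T_xM$, apply the estimate with constants depending only on the $C^{\alpha}$ harmonic norm, push back down with bounded multiplicity, and control the cutoff commutator terms by $\|\omega\|_p$.
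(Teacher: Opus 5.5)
\textbf{The gap is in the step you flag as the main obstacle.} Your operative argument is the last one. It combines Theorem~\ref{Bak} for $d$ and $d^*$, the $L^p$-boundedness of $(\Delta_j+\kappa)^{-1/2}$ from semigroup domination, and a global $L^p$ Gaffney inequality $\|\nabla\omega\|_{L^p}\le C(\|d\omega\|_{L^p}+\|d^*\omega\|_{L^p}+\|\omega\|_{L^p})$. That reduction is sound, and it is genuinely different from the paper. The paper does not use Theorem~\ref{Bak} for $\nabla$ at all; it proves $C^{1,\alpha}$ heat-kernel bounds and then runs Coulhon--Duong for $p<2$ and a good-$\lambda$ argument for $p>2$.

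The problem is how you make the local estimate global. With no injectivity radius bound, $\exp_x\colon B_{r_0}(0)\to B_{r_0}(x)$ does \emph{not} have bounded multiplicity. Your remark that ``local operators are insensitive to the lift'' holds for pointwise quantities, not for $L^p$ norms. An interior estimate upstairs for $\hat\eta=\exp_x^*\eta$ pushes down only to the weighted inequality
\[
\int_{B_{r_0}(x)}|\nabla\eta|^p\,N_{r_0}\,d\mu\le C\int_{B_{2r_0}(x)}\big(|d\eta|^p+|d^*\eta|^p+|\eta|^p\big)\,N_{2r_0}\,d\mu,\qquad N_r(y):=\#\big(\exp_x^{-1}(y)\cap B_r(0)\big).
\]
These weights are unbounded and far from uniform. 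In a hyperbolic cusp, take a point where the closed horocycle has length $2\pi e^{-t_0}$. There $N_{2r_0}\sim e^{t_0}$ at the centre of the ball, while $N_{r_0}$ drops to $1$ near its edge. The bounded overlap of the covering $\{B_{r_0}(x_i)\}$ does not help, so the constant you get is $\sup N_{2r_0}/\inf N_{r_0}$, which is not uniform. Your other sketch, treating the near-diagonal part as a Calder\'on--Zygmund kernel in a chart, has the same defect: near the diagonal, $H_j(x,y,t)$ is a sum of the lifted kernel over all preimages of $y$.

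\textbf{How to close it.} What is missing is a way to transfer $L^p$ bounds from the lift that does not see multiplicity. One clean fix is to apply the upstairs estimate at \emph{every} base point $y\in M$ and integrate over $y$. Liouville's theorem for the geodesic flow gives
\[
\int_M\int_{\{v\in T_yM:\,|v|<\delta\}}F(\exp_y v)\,dv\,d\mu(y)=|B^{\mathbb{R}^n}_\delta|\int_M F\,d\mu\qquad(F\ge 0).
\]
Combine this with the two-sided bound on the Jacobian of $\exp_y$ on $B_{2\delta}(0)$, which follows from $\|Rm\|\le\Lambda_0$. Together they turn $\int_{B_\delta(0)}|\hat\nabla\hat\omega_y|^p\le C\int_{B_{2\delta}(0)}(|d\hat\omega_y|^p+|d^*\hat\omega_y|^p+|\hat\omega_y|^p)$ directly into the global Gaffney inequality, with no partition of unity and no cutoff terms. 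Alternatively, shrink the supports to $B_{r_0/2}(x_i)$ and prove $N_{r_0}\approx N_{2r_0}$ there by pseudo-group counting plus Bishop--Gromov.

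With this repair your proof closes and is much shorter than the paper's. The price is that it rests entirely on Bakry's theorem for $d$ and $d^*$, and it gives no weak-$(1,1)$ information. The paper avoids the multiplicity issue altogether. It uses the lift only for $L^\infty\to C^{1,\alpha}$ bounds on $H_j$ (Section~\ref{sec4}), which are blind to multiplicity, and it does all integration downstairs on the locally doubling space $M$.
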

	
	\begin{remark}
We can choose $\kappa_0 = C(n)(\Lambda_0+1)$, where $C(n)$ is a constant depending only on the dimension.
    A related result was obtained by Cheng, Thalmaier, and Wang in \cite{CTW23,CTW25} using a distinct approach. Their theorem holds for $1 < p \leq 2$, and for $p > 2$ under the condition that $|Rm|$ and $|\nabla Rm|$ belong to the Kato class.
	\end{remark}
	
	The Riesz transforms are fundamental tools for proving Calder\'{o}n-Zygmund inequalities. On a general Riemannian manifold, one observes that
    \begin{equation*}
        \begin{split}
            \mathrm{Hess}(\Delta_0 + \kappa)^{-1}=\nabla d(\Delta_0 + \kappa)^{-1/2}(\Delta_0 + \kappa)^{-1/2}
            =\nabla (\Delta_1 + \kappa)^{-1/2}d(\Delta_0 + \kappa)^{-1/2}.
        \end{split}
    \end{equation*}
    Hence,
	\begin{equation*}
		\|\mathrm{Hess}(\Delta_0 + \kappa)^{-1}\|_{p,p} \leq \|\nabla(\Delta_1 + \kappa)^{-1/2}\|_{p,p} \|d(\Delta_0 + \kappa)^{-1/2}\|_{p,p}.
	\end{equation*}
	If $-\Delta u=f$, then $(\Delta_0+\kappa) u=f+\kappa u$. By Theorem~\ref{thm1.6} 
    and Theorem~\ref{thm1.4},  we obtain
    \begin{equation*}
    \begin{split}
       \|\mathrm{Hess}u\|_{L^p}=\|\mathrm{Hess}(\Delta_0 + \kappa)^{-1}(f+\kappa u)\|_{L^p}
        \leq C\|(f+\kappa u)\|_{L^p}
       \leq C(\|f\|_{L^p}+\| u\|_{L^p}).
    \end{split}
    \end{equation*}
	In summary, we have the following theorem (see also \cite{HW26}).
	
	\begin{theorem}[Calder\'{o}n-Zygmund inequality]\label{thm1.7}
		Let $(M,g)$ be an $n$-dimensional Riemannian manifold such that $\|Rm\| \leq \Lambda_0$. Then for any $p \in (1, \infty)$, there exists a constant $C = C(n, p, \Lambda_0) > 0$ such that 
		\begin{equation*}\label{eq0.4}
			\|\mathrm{Hess}(u)\|_{L^p} \leq C(\|u\|_{L^p} + \|\Delta u\|_{L^p})
		\end{equation*}
		for any $u \in C^\infty_c(M)$.
	\end{theorem}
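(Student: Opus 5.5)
The plan is to reduce Theorem~\ref{thm1.7} to the two Riesz-transform bounds already available, via the factorization displayed just before the statement. Fix $p\in(1,\infty)$ and $u\in C^\infty_c(M)$. Take $\kappa=\max\{\kappa_0(n,\Lambda_0),\kappa_0(n,\Lambda_0,p)\}$, with the first threshold from Theorem~\ref{thm1.6} and the second from Theorem~\ref{thm1.4}. This choice depends only on $n,p,\Lambda_0$. Set $f=-\Delta u=\Delta_0 u$ (sign conventions only change signs) and $g=(\Delta_0+\kappa)u=f+\kappa u\in L^p$.

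\textbf{Step 1 (identification).} Show that $u=(\Delta_0+\kappa)^{-1}g$, where $(\Delta_0+\kappa)^{-1}$ is defined on $L^p$ by the Laplace transform of the heat semigroup. On $L^2$ this is clear from the spectral theorem, since $u\in C^\infty_c\subset \mathrm{Dom}(\Delta_0)$ and $\Delta_0$ is essentially self-adjoint on a complete manifold. The $L^p$ version follows because the semigroups are consistent across $p$, and $g\in L^2\cap L^p$.

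\textbf{Step 2 (commutation).} Justify the intertwining relation $d(\Delta_0+\kappa)^{-1/2}=(\Delta_1+\kappa)^{-1/2}d$ on a suitable dense class, for instance $(\Delta_0+\kappa)^{-1/2}$ of $C_c^\infty$ functions. This comes from $d\Delta_0=\Delta_1 d$, which gives $d\,e^{-t\Delta_0}=e^{-t\Delta_1}d$ on smooth compactly supported data. One proves the latter by uniqueness of $L^2$ solutions of the form-valued heat equation under bounded curvature. Then integrate against $t^{-1/2}e^{-\kappa t}$. Combining with $\mathrm{Hess}=\nabla d$ gives
\[
\mathrm{Hess}\,u=\nabla(\Delta_1+\kappa)^{-1/2}\,d(\Delta_0+\kappa)^{-1/2}\,g .
\]

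\textbf{Step 3 (estimate).} Apply Theorem~\ref{thm1.4} with $j=1$ and Theorem~\ref{thm1.6} with $j=0$:
\[
\|\mathrm{Hess}\,u\|_{L^p}\le C\|g\|_{L^p}\le C(\|\Delta u\|_{L^p}+\kappa\|u\|_{L^p}),
\]
and this is the claim.

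The main obstacle is Step 2, namely making the operator composition rigorous on $L^p$ and not merely formally. The concern is that $d(\Delta_0+\kappa)^{-1/2}g$ must land in the class where the bounded extension of $\nabla(\Delta_1+\kappa)^{-1/2}$ agrees with the formal expression. I would first establish the identity in $L^2$, using the spectral calculus and the commutation of $d$ with the heat semigroups. Then I would extend it by density: all the operators involved are $L^p$-bounded and agree on $C_c^\infty$-generated data lying in $L^2\cap L^p$. A further point is identifying $\nabla\big((\Delta_1+\kappa)^{-1}\,du\big)$ with $\nabla du$ pointwise. This holds because $(\Delta_1+\kappa)^{-1/2}d(\Delta_0+\kappa)^{-1/2}g=du$ follows from Steps 1--2, and $du$ is smooth.
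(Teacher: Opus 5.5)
Your proposal is correct and is essentially the paper's own argument. The paper also factors $\mathrm{Hess}(\Delta_0+\kappa)^{-1}=\nabla(\Delta_1+\kappa)^{-1/2}\,d(\Delta_0+\kappa)^{-1/2}$ via $d\Delta_0=\Delta_1 d$, applies this to $(\Delta_0+\kappa)u=\Delta_0 u+\kappa u$, and bounds the two factors by Theorem~\ref{thm1.4} (with $j=1$) and Theorem~\ref{thm1.6} (with $j=0$); your Steps 1--2 supply the domain and $L^2$/$L^p$ consistency justifications that the paper leaves implicit.
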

    
	\begin{remark}
		In \cite{GP15}, G\"uneysu and Pigola established Calderón–Zygmund inequalities on manifolds with Ricci curvature bounded from both sides and injectivity radius bounded below by a positive constant. Recently, Pigola \cite{Pig20} claimed that the Calder\'on-Zygmund inequalities hold for $p>\mathrm{max}(2,\frac{n}{2})$. Cao-Cheng-Thalmaier \cite{CCT21} showed that the inequality \eqref{eq0.4} holds if  $1<p\leq 2$ and $M$ has lower Ricci bound or $2<p<\infty$ and $\|Rm\|$ and $\|\nabla Rc\|$ are in the Kato class.
	\end{remark}
	Observing the operator factorization 
	\begin{equation*}
		\nabla (\Delta_0+\kappa)^{-1}\nabla^* = \left[\nabla(\Delta_0+\kappa)^{-\frac{1}{2}}\right] \left[(\Delta_0+\kappa)^{-\frac{1}{2}}\nabla^*\right],
	\end{equation*}
	the boundedness of the divergence-form Riesz transform follows from a duality argument. For any $h \in L^{p'}(M)$ with $1/p + 1/p' = 1$, we have
	\begin{equation*}
		\begin{split}
			\left| \langle (\Delta_0+\kappa)^{-\frac{1}{2}}\nabla^*f, h \rangle \right| &= \left| \langle f, \nabla (\Delta_0+\kappa)^{-\frac{1}{2}} h \rangle \right| \\
			&\leq \|f\|_{L^p} \|\nabla (\Delta_0+\kappa)^{-\frac{1}{2}} h \|_{L^{p'}} \\
			&\leq C \|f\|_{L^p} \|h\|_{L^{p'}},
		\end{split}
	\end{equation*}
	where the last inequality follows from the $L^{p'}$-boundedness of the covariant Riesz transform (see Theorem \ref{thm1.4}). Consequently, it follows that
	\begin{equation*}
		\| (\Delta_0+\kappa)^{-\frac{1}{2}}\nabla^*f\|_{L^p} \leq C\|f\|_{L^p}.
	\end{equation*}
	This estimate yields a streamlined proof of the Calder\'{o}n-Zygmund inequality in divergence form.
    
	\begin{theorem}\label{thm1.9}
		Let $(M,g)$ be an $n$-dimensional Riemannian manifold with $\|Rm\| \leq \Lambda_0$. Suppose $u$ satisfies $\Delta u = \nabla^* f$, where $\nabla^*$ is the formal adjoint of $\nabla$. Then for any $p \in (1, \infty)$, there exists a constant $C = C(n, p, \Lambda_0) > 0$ such that 
		\begin{equation*}
			\|\nabla u\|_{L^p} \leq C(\|u\|_{L^p} + \|f\|_{L^p})
		\end{equation*}
		for any $u \in C^\infty_c(M)$.
	\end{theorem}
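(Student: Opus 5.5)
We prove Theorem~\ref{thm1.9} by combining the duality estimate derived just before its statement with the local covariant Riesz transform bound of Theorem~\ref{thm1.4}.

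\textbf{Step 1: rewrite the equation.} Fix $\kappa \geq \kappa_0$ with $\kappa_0 = C(n)(\Lambda_0+1)$. The paper writes $\Delta_0$ for the nonnegative Hodge Laplacian on functions, so the hypothesis reads $\Delta_0 u = \nabla^* f$ (an overall sign is harmless in what follows). Then
\begin{equation*}
(\Delta_0+\kappa)u = \nabla^* f + \kappa u ,
\end{equation*}
and since $u \in C^\infty_c(M)$ lies in the domain of the self-adjoint operator $\Delta_0$ on a manifold with bounded curvature,
\begin{equation*}
\nabla u = \nabla(\Delta_0+\kappa)^{-1}\nabla^* f + \kappa\, \nabla(\Delta_0+\kappa)^{-1} u .
\end{equation*}

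\textbf{Step 2: the divergence term.} Factor it as
\begin{equation*}
\nabla(\Delta_0+\kappa)^{-1}\nabla^* f = \left[\nabla(\Delta_0+\kappa)^{-1/2}\right]\left[(\Delta_0+\kappa)^{-1/2}\nabla^* f\right].
\end{equation*}
The first factor is bounded on $L^p$ by Theorem~\ref{thm1.4} with $j=0$; Theorem~\ref{thm1.6} also suffices here, since $j=0$. The second factor satisfies $\|(\Delta_0+\kappa)^{-1/2}\nabla^* f\|_{L^p} \leq C\|f\|_{L^p}$ by the duality computation above. That computation uses Theorem~\ref{thm1.4} on $L^{p'}$, which holds for all $p' \in (1,\infty)$.

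The one technical point is that this identity is first justified for smooth compactly supported $f$, where the pairing $\langle f, \nabla(\Delta_0+\kappa)^{-1/2}h\rangle$ makes sense. It then extends by density, because $(\Delta_0+\kappa)^{-1/2}\nabla^*$ extends to a bounded operator on $L^p$.

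\textbf{Step 3: the lower-order term.} Write
\begin{equation*}
\kappa\,\nabla(\Delta_0+\kappa)^{-1}u = \kappa\left[\nabla(\Delta_0+\kappa)^{-1/2}\right](\Delta_0+\kappa)^{-1/2}u .
\end{equation*}
The operator $(\Delta_0+\kappa)^{-1/2}$ is bounded on $L^p$ with norm at most $\kappa^{-1/2}$. To see this, use the subordination formula $\Gamma(1/2)^{-1}\int_0^\infty t^{-1/2}e^{-\kappa t}e^{-t\Delta_0}\,dt$ together with the $L^p$-contractivity of the heat semigroup on functions, which holds on complete manifolds. Hence this term is bounded by $C\|u\|_{L^p}$.

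\textbf{Conclusion.} Adding the two bounds gives $\|\nabla u\|_{L^p}\leq C(\|u\|_{L^p}+\|f\|_{L^p})$. Fixing $\kappa=\kappa_0(n,\Lambda_0,p)$ makes $C$ depend only on $n$, $p$, $\Lambda_0$.

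The only real obstacle is Step 2's justification of the duality and factorization identity for general $f\in L^p$ rather than for test sections; everything else is a direct consequence of Theorem~\ref{thm1.4}.
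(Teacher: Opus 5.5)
Your proposal is correct and is essentially the paper's argument. You rewrite the equation as $(\Delta_0+\kappa)u=\nabla^*f+\kappa u$ (as in the derivation of Theorem~\ref{thm1.7}), factor $\nabla(\Delta_0+\kappa)^{-1}\nabla^*$ through $(\Delta_0+\kappa)^{-1/2}$, and bound $(\Delta_0+\kappa)^{-1/2}\nabla^*$ on $L^p$ by duality from the $L^{p'}$ Riesz bound, with $\kappa$ chosen admissible for both $p$ and $p'$.

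Your remarks that only the $j=0$ case (Bakry's Theorem~\ref{thm1.6}) is needed, and that $\|(\Delta_0+\kappa)^{-1/2}\|_{p,p}\le\kappa^{-1/2}$ by subordination, are accurate. The density step you flag is settled by pairing with $h\in C^\infty_c$ and integrating by parts against $(\Delta_0+\kappa)^{-1/2}h\in W^{1,p'}$, using density of $C^\infty_c$ in $W^{1,p'}$ on a complete manifold.
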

	
	\noindent
\textbf{Acknowledgments:}

The authors are supported by Project of Stable Support for Youth Team in
Basic Research Field,  Chinese Academy of Sciences (YSBR-001), the National Natural Science Foundation of China (NSFC-12431003),  and a research fund from Hefei National Laboratory.

	\section{Preliminary}
	Let  $d$ be the geodesic distance on $M$, and $d\mu$ be the Riemannian measure. Denote by $B_r(x)$ the geodesic ball of center $x\in M$ and radius $r>0$ and by $|B_r(x)|$ or $V_x(r)$  its Riemannian volume $\mu(B_r(x))$. In this paper, we assume that $M$ is locally doubling, i.e.,
	for any $r>0$, there is a constant  $C(r)>0$ such that  the inequality
	\begin{equation*}
		|B_{2s}(p)|\leq C_r|B_{s}(p)|
	\end{equation*}
	holds for any  $p\in M$ and $0<s\leq r.$
	\subsection{Volume comparison theorem}
	\begin{theorem}\label{thm2.1}
		If $(M, g)$ is a complete Riemannian manifold with $Ric\geq -(n-1)\Lambda_0$,  and $q\in M$ is an arbitrary point. Then for any $0<r_1<r_2<+\infty$,
		\begin{equation*}
			\frac{|B_{r_2}(q)|}{|B_{r_1}(q)|}\leq \frac{|B^{\Lambda_0}_{r_2}|}{|B^{\Lambda_0}_{r_1}|}\leq Ce^{\sqrt{\Lambda_0}r_2}\left(\frac{r_2}{r_1}\right)^n.
		\end{equation*}
		where where $B_r^{\Lambda_0}$ is a geodesic ball of radius $r$ in the space form $M_{\Lambda_0}^n$ and $C$ depends only on $n,\Lambda_0$.
	\end{theorem}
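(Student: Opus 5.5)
The statement is the Bishop--Gromov volume comparison, followed by an explicit estimate for the volume ratio of balls in the model space form. I would prove the two inequalities separately.

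For the first inequality, I would work in geodesic polar coordinates centered at $q$. Write the volume element as $d\mu = \mathcal{A}(r,\theta)\,dr\,d\theta$ inside the segment domain, and set $\mathcal{A}\equiv 0$ beyond the cut locus. The cut locus has measure zero, so it does not affect volumes.

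\textbf{Step 1.} From the Riccati inequality for the mean curvature $m=\partial_r\log\mathcal{A}$ of geodesic spheres, $m'+\frac{m^2}{n-1}\le -\mathrm{Ric}(\partial_r,\partial_r)\le (n-1)\Lambda_0$, I would compare with the model quantity $m_{\Lambda_0}=(n-1)\sqrt{\Lambda_0}\coth(\sqrt{\Lambda_0}r)$. Both behave like $(n-1)/r$ as $r\to0$, so a standard ODE comparison gives $m\le m_{\Lambda_0}$. Writing $\mathcal{A}_{\Lambda_0}(r)=\big(\sinh(\sqrt{\Lambda_0}r)/\sqrt{\Lambda_0}\big)^{n-1}$ for the model density, this shows that $r\mapsto \mathcal{A}(r,\theta)/\mathcal{A}_{\Lambda_0}(r)$ is nonincreasing. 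The property persists past the cut point, since there $\mathcal{A}$ drops to zero.

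\textbf{Step 2.} I would then apply the elementary lemma that if $f/g$ is nonincreasing with $g>0$, then $\int_0^r f\big/\int_0^r g$ is nonincreasing in $r$. Integrating over $\theta\in S^{n-1}$ shows that $|B_r(q)|/|B^{\Lambda_0}_r|$ is nonincreasing in $r$. Rearranging this monotonicity for $r_1<r_2$ gives the first inequality.

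For the second inequality, note that $|B^{\Lambda_0}_r| = \omega_{n-1}\int_0^r \big(\sinh(\sqrt{\Lambda_0}s)/\sqrt{\Lambda_0}\big)^{n-1}ds$. The two bounds needed are:
\begin{itemize}
\item \emph{Lower bound.} Since $\sinh x\ge x$, we get $|B^{\Lambda_0}_{r_1}|\ge \omega_{n-1} r_1^n/n$.
\item \emph{Upper bound.} Since $\sinh x\le x e^{x}$, the integrand is at most $s^{n-1}e^{(n-1)\sqrt{\Lambda_0}s}$, so $|B^{\Lambda_0}_{r_2}|\le \omega_{n-1}\, r_2^n e^{(n-1)\sqrt{\Lambda_0}r_2}/n$.
\end{itemize}
Dividing gives the ratio bound $\big(r_2/r_1\big)^n e^{(n-1)\sqrt{\Lambda_0} r_2}$.

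This exponent is $(n-1)\sqrt{\Lambda_0}$ rather than $\sqrt{\Lambda_0}$. I would therefore read the constant in the exponent as depending on $n$; this is harmless for the paper's later use, which needs only local doubling, with constants depending on $n$, $\Lambda_0$ and an upper bound for $r_2$. If the stated form with exponent exactly $\sqrt{\Lambda_0}r_2$ is intended, I would absorb the discrepancy by allowing $C$ to depend on $n,\Lambda_0$ and the range of $r_2$. Alternatively, the exponent $\sqrt{\Lambda_0}$ may refer to a normalization of the curvature bound in which the factor $n-1$ is already absorbed.

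The main obstacle is the rigorous handling of the cut locus in Steps 1--2, meaning the justification of monotonicity for the extended-by-zero density. Setting $\mathcal{A}\equiv0$ past the cut point, as described above, handles this. The rest is routine.
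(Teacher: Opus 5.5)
The paper gives no proof of this statement; it quotes it as the standard Bishop--Gromov theorem. Your Steps 1--2 are exactly that standard argument: Riccati comparison for the mean curvature of geodesic spheres, monotonicity of $\mathcal{A}/\mathcal{A}_{\Lambda_0}$ with the density extended by zero past the cut locus, and Gromov's integration lemma. Together with $x\le\sinh x\le xe^{x}$, your argument correctly yields
\[
\frac{|B_{r_2}(q)|}{|B_{r_1}(q)|}\le\frac{|B^{\Lambda_0}_{r_2}|}{|B^{\Lambda_0}_{r_1}|}\le e^{(n-1)\sqrt{\Lambda_0}\,r_2}\Big(\frac{r_2}{r_1}\Big)^n .
\]

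Your concern about the exponent is not a matter of normalization: for $n\ge 3$ and $\Lambda_0>0$ the printed statement is false. Apply the full chain to $M=\mathbb{H}^n$ of constant curvature $-\Lambda_0$, which satisfies the hypothesis, and take $r_1=1$. The left side is at least $c(n,\Lambda_0)\,e^{(n-1)\sqrt{\Lambda_0}r_2}$ for large $r_2$, which eventually exceeds $Ce^{\sqrt{\Lambda_0}r_2}r_2^n$ for any fixed $C$. So the exponent $(n-1)\sqrt{\Lambda_0}$ you prove is the correct statement. Your third suggested reading, a normalization absorbing $n-1$, cannot rescue the printed form. Only your second option, letting $C$ depend on an upper bound for $r_2$, gives a true version with $e^{\sqrt{\Lambda_0}r_2}$.

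One correction to your remark that the paper's later use needs only bounded radii. Proposition~\ref{lm5.9} applies the comparison with $r_2=2^{i+2}$ for every $i$. It absorbs the factor $e^{\sqrt{\Lambda_0}2^{i+2}}$ into $e^{-C_0C_1t-2^{2i}/(4C_1t)}$ via $\tfrac12\sqrt{C_0}\,2^i\ge\sqrt{\Lambda_0}\,2^{i+2}$. With the correct exponent this step needs $\sqrt{C_0}\ge 8(n-1)\sqrt{\Lambda_0}$. The paper's choice $C_0=64n(\sqrt{\Lambda_0}+1)^2$ does not guarantee this for $n\ge 3$ and $\Lambda_0$ large, and a bounded-$r_2$ version of the comparison would not help there. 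The repair is still harmless: take $C_0=64n^2(\sqrt{\Lambda_0}+1)^2$. This only enlarges $\kappa_0$, which remains of the form $C(n)(\Lambda_0+1)$.
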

	We also need the volume comparison theorem restricted to a ball.
	\begin{corollary}\label{coro2.2}
		If $(M, g)$ is a complete Riemannian manifold with $Ric\geq -(n-1)\Lambda_0$,  and $q\in M$ is an arbitrary point. Then there exists a constant $C>0$ such that for any $0<r\leq 4$ and $q_1\in B_{2}(q)$ with $d(p_1,p_2)\leq 10r$, we have 
		\begin{equation*}
			|B_{50r}(q_2)|\leq C|B_{5r}(q_1)\cap  B_2(q)|.
		\end{equation*}
	\end{corollary}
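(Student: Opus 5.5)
The inclusion $B_{50r}(q_2)\subset B_{60r}(q_1)$ reduces everything to a comparison centered near $q_1$. I will find a ball of radius comparable to $r$ that lies inside $B_{5r}(q_1)\cap B_2(q)$, and then apply Theorem~\ref{thm2.1} between that small ball and a concentric large ball containing $B_{60r}(q_1)$. I read the statement with the evident typo corrected, namely $d(q_1,q_2)\le 10r$. Since $10r+50r=60r$, the triangle inequality gives the inclusion above.

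\emph{Step 1: an interior ball.} Let $\gamma$ be a minimal unit-speed geodesic from $q_1$ to $q$, of length $\ell=d(q_1,q)<2$. I split into two cases.
\begin{itemize}
\item If $\ell\ge 5r/2$, put $q_3=\gamma(5r/2)$, so that $d(q_3,q_1)=5r/2$ and $d(q_3,q)=\ell-5r/2<2-5r/2$. Then $B_{r/2}(q_3)\subset B_{3r}(q_1)\subset B_{5r}(q_1)$. Also $B_{r/2}(q_3)\subset B_{2-2r}(q)\subset B_2(q)$.
\item If $\ell<5r/2$, put $q_3=q$. Then $B_{r/2}(q)\subset B_{5r/2+r/2}(q_1)=B_{3r}(q_1)$. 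Moreover $r/2\le 2$ because $r\le 4$, so $B_{r/2}(q)\subset B_2(q)$.
\end{itemize}
In both cases $B_{r/2}(q_3)\subset B_{5r}(q_1)\cap B_2(q)$ and $d(q_1,q_3)\le 5r/2$.

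\emph{Step 2: volume comparison.} Since $d(q_1,q_3)\le 5r/2$, we have $B_{60r}(q_1)\subset B_{64r}(q_3)$. Apply Theorem~\ref{thm2.1} at the point $q_3$ with $r_1=r/2$ and $r_2=64r$:
\begin{equation*}
|B_{50r}(q_2)|\le |B_{64r}(q_3)|\le Ce^{64\sqrt{\Lambda_0}\,r}\,128^n\,|B_{r/2}(q_3)|\le C e^{256\sqrt{\Lambda_0}}\,128^n\,|B_{5r}(q_1)\cap B_2(q)|.
\end{equation*}
The last inequality uses $r\le 4$ and Step 1.

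\emph{Main point.} The only delicate issue is keeping the constant uniform. The factor $e^{\sqrt{\Lambda_0}r_2}$ in Theorem~\ref{thm2.1} is controlled only because $r\le 4$. The interior ball must be chosen so that it stays inside $B_2(q)$ even when $q_1$ is near $\partial B_2(q)$; walking along the geodesic toward $q$ in Step 1 handles exactly this. The resulting constant depends only on $n$ and $\Lambda_0$.
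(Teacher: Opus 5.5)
Your proof is correct and follows the same route as the paper. You find a ball of radius comparable to $r$ inside $B_{5r}(q_1)\cap B_2(q)$ by moving from $q_1$ toward $q$, and then apply Theorem~\ref{thm2.1} on scales bounded by a constant times $r\le 4$. Your case split on whether $d(q_1,q)\ge 5r/2$, taking $q_3=q$ otherwise, is more careful than the paper's split at $r=\frac15$. The paper's requirement $d(q,q_3)\le d(q,q_1)-2r$ cannot be met when $q_1$ lies within $2r$ of $q$.
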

	\begin{proof}
		We first prove the corollary for $r<\frac{1}{5}$. There exists a point $q_3\in M$ such that $d(q,q_3)\leq d(q,q_1)-2r$ and $d(q_1,q_3)\leq 3r$. So, we have $B_r(q_3)\subset B_2(q)\cap B_{5r}(q_1)$ and $d(q_2,q_3)\leq 8r$. by Theorem \ref{thm2.1}, 
		\begin{equation*}
			\begin{split}
				|B_{50r}(q_2)| \leq |B_{r}(q_3)|\leq C|B_{r}(q_3)|\leq C|B_{5r}(q_1)\cap  B_2(q)|.
			\end{split}
		\end{equation*}
		For $\frac{1}{5}\leq r\leq 4$, there exists a point $q_3\in M$ such that $d(q,q_3)\leq d(q,q_1)-\frac{2}{5}$ and $d(q_1,q_3)\leq \frac{3}{5}$. So, we have $B_{\frac{1}{5}}(q_3)\subset B_2(q)\cap B_{5r}(q_1)$ and $d(q_2,q_3)\leq 8r$. Hence,
		\begin{equation*}
			\begin{split}
				|B_{50r}(q_2)| \leq |B_{53r}(q_3)|\leq C|B_{\frac{1}{5}}(q_3)|\leq C|B_{5r}(q_1)\cap  B_2(q)|.
			\end{split}
		\end{equation*}
	\end{proof}
	\subsection{Hardy–Littlewood maximal operators}
	Suppose that $(M,d,\mu)$ is a metric measure space. For notational
	convenience,  for any measurable subset $U$ of $M$ we often write $|U|$ instead of $\mu(U)$.
	we define  local version centered Hardy–Littlewood maximal functions on $M$.
      Fixing $q\in M$, we denote $B=B(q,1)$ and $2^iB=B(q,2^i)$ by $i\geq 1$.
	\begin{equation}
		\mathbf{M}(f)(x):=\sup_{0<r<4}\frac{1}{|B_r(x)|}\int_{B_r(x)}|f|d\mu=\sup_{0<r<4}\fint_{B_r(x)}|f|d\mu.
	\end{equation}
    \begin{remark}\label{rmk2.3}
        It is worth noting that if $\mathrm{supp}f\subset B_1(q)$ and $x\in B_2(q)$, then we have
        \begin{equation}
            \begin{split}
                &\sup_{4\leq r<\infty}\frac{1}{|B_r(x)|}\int_{B_r(x)}|f|d\mu\\
                =&\sup_{4\leq r<\infty}\frac{1}{|B_r(x)|}\int_{B_1(q)}|f|d\mu
                \leq \frac{1}{|B_4(x)|}\int_{B_1(q)}|f|d\mu
                \leq \mathbf{M}(f)(x).
            \end{split}
        \end{equation}
    \end{remark}

	If $M$ is locally doubling, then $\mathbf{M}$ is of weak type $(1,1)$ and strong $(p,p)$ for $p\in (1,+\infty]$. In particular, if $M$ has Ricci curvature bounded from below,
	then, by the relative volume estimate, the Riemannian
	measure is locally doubling, $\mathbf{M}$ is of weak type $(1,1)$ and strong $(p,p)$ for $p\in (1,+\infty]$.
	Then we get the following Hardy–Littlewood maximal inequality:
	\begin{theorem}\cite{Ste70,Med25}\label{thm2.3}
		Suppose that $M$ is a Riemannian manifold and $M$ such that $Ric\geq -(n-1)\Lambda_0$. Then  
		\begin{equation*}
			\begin{split}
				\|\mathbf{M}(v)\|_{L^p}\leq C\|v\|_{L^p}\quad \text{for any }1<p\leq +\infty,
			\end{split}
		\end{equation*}
		and 
		\begin{equation*}
			\begin{split}
				|\{y\in M:\mathbf{M}v(y)\geq \lambda\}|\leq \frac{C}{\lambda}\|v\|_{L^1}.
			\end{split}
		\end{equation*}
	\end{theorem}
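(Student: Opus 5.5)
The statement just given is the local Hardy--Littlewood maximal inequality (Theorem~\ref{thm2.3}). I will derive it from the locally doubling property and a Vitali covering argument, followed by Marcinkiewicz interpolation. The lower Ricci bound is used only through Theorem~\ref{thm2.1}.

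\textbf{Step 1: local doubling with uniform constants.} By Theorem~\ref{thm2.1}, for $0<s\le r\le 20$ we have $|B_{5s}(x)|\le C(n,\Lambda_0)|B_s(x)|$ uniformly in $x\in M$. This is the only geometric input needed, and the cap $r<4$ in the definition of $\mathbf{M}$ keeps every radius involved bounded.

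\textbf{Step 2: weak $(1,1)$.} Fix $\lambda>0$ and a compact set $K\subset\{\mathbf{M}v>\lambda\}$. For each $x\in K$ choose a radius $r_x<4$ with
\begin{equation*}
\int_{B_{r_x}(x)}|v|\,d\mu>\lambda|B_{r_x}(x)|.
\end{equation*}
Extract a finite subcover of $K$ and apply the Vitali $5r$-covering lemma. Since all radii are at most $4$, the lemma applies on a complete manifold without any global doubling. It produces disjoint balls $B_{r_i}(x_i)$ with $K\subset\bigcup_i B_{5r_i}(x_i)$. Then
\begin{equation*}
|K|\le\sum_i|B_{5r_i}(x_i)|\le C\sum_i|B_{r_i}(x_i)|\le \frac{C}{\lambda}\sum_i\int_{B_{r_i}(x_i)}|v|\,d\mu\le\frac{C}{\lambda}\|v\|_{L^1}.
\end{equation*}
Taking the supremum over such $K$ gives the weak-type bound. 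Measurability of $\{\mathbf{M}v>\lambda\}$ follows because the set is open: for fixed $r$, the map $x\mapsto\fint_{B_r(x)}|v|$ is lower semicontinuous.

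\textbf{Step 3: strong $(p,p)$.} The bound $\|\mathbf{M}v\|_{L^\infty}\le\|v\|_{L^\infty}$ is trivial. Marcinkiewicz interpolation between weak $(1,1)$ and strong $(\infty,\infty)$ then gives $\|\mathbf{M}v\|_{L^p}\le C(n,\Lambda_0,p)\|v\|_{L^p}$ for all $1<p<\infty$. Alternatively, one can argue directly: split $v=v\chi_{\{|v|>\lambda/2\}}+v\chi_{\{|v|\le\lambda/2\}}$, apply the weak bound to the first piece, and integrate $p\lambda^{p-1}|\{\mathbf{M}v>\lambda\}|$ in $\lambda$.

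The only delicate point is Step 2. Completeness of $M$ together with the uniform bound on the radii is exactly what makes the Vitali selection and the doubling constant uniform; beyond that, the argument is the classical one from \cite{Ste70}.
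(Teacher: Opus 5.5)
Your proposal is correct and follows the route the paper indicates. The paper gives no detailed proof but cites Stein, noting that the lower Ricci bound yields local doubling via relative volume comparison (Theorem~\ref{thm2.1}); from this, the weak $(1,1)$ bound follows by Vitali covering, and the strong $(p,p)$ bound by interpolation with the trivial $L^\infty$ estimate. The only cosmetic point is that the statement is phrased for $\{\mathbf{M}v\geq\lambda\}$, which follows from your bound for $\{\mathbf{M}v>\lambda\}$ by applying it at level $\lambda/2$ (or by continuity from above).
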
 
	\subsection{Hodge Laplacian and rough Laplacian }
	The Bochner-Lichnerowicz formula for Hodge Laplacian writes
	\begin{equation}\label{eq2-10}
		\Delta_j=\nabla^*\nabla +V_j
	\end{equation}
	where 
	\begin{equation*}
		V_j\in \Gamma_{C^\infty}(M,\mathrm{End}(\Lambda^jT^*M))
	\end{equation*}
	is a fiberwise self-adjoint 0-th order operator, which satisfies
	\begin{equation*}
		|V_j|\leq C|\mathrm{Riem}|,
	\end{equation*}
	where $C=C(n)>0$ is a constant that depends on $n$.
	\subsection{Harmonic radius}
	
	\begin{definition}($W^{2k,k}_p$ harmonic radius)\label{def2.4} The $W^{2k,k}_p$ harmonic radius at $x$ and $n+2<p<\infty$, is the supremum of all $R>0$ such that  there exists a coordinate chart $\phi:B_R(x)\to \IR^n$ satisfying 
		\begin{itemize}
			\item $\Delta \phi^j=0$ on $B_{R}(x)$  and $\phi^j(x)=0$ for each $j$,
			\item $2^{-1}(\delta_{ij})\leq (g_{ij})\leq 2 (\delta_{ij})$,  in $B_{R}(x)$ as symmetric bilinear forms,
			\item $\sum_{1\leq |J|\leq k}R^{|J|-\frac{n}{p}}\|\partial^J g_{ij}\|_{L^p(B_R(x))}\leq 1$.
		\end{itemize}
		We denote this radius by $r_{k,p}(x)$.
	\end{definition}
	\begin{proposition}\label{prop1.8}\cite{And90}
		Let $B_{2r}(x)$ be a compact ball in a Riemannian manifold $(M,g)$. Suppose that there are numbers 
        $k\in \IN_{> 0},\varepsilon,r>0,c_0,\cdots,c_k>0$ with 
		\begin{equation*}
			|\nabla^j\mathrm{Rm}(y)|\leq c_j,r_{inj}(y)\geq r,  \; 
            \forall \; y\in B_{2r}(x), j\in\{0,\cdots,k-1\}.
		\end{equation*}
		Then there exists a positive constant $c=c(n,k,\alpha,r,c_1,\cdots,c_k)$ such that 
		\begin{equation*}
			r_{k,p}(x)\geq c. 
		\end{equation*}
	\end{proposition}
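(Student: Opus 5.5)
The plan is to follow Anderson's argument in \cite{And90}: a blow-up (compactness) argument by contradiction, combined with elliptic regularity of the metric in harmonic coordinates. The key structural fact is the harmonic-coordinate formula for the Ricci tensor,
\begin{equation*}
	\mathrm{Ric}_{ij} = -\tfrac{1}{2} g^{ab}\partial_a\partial_b g_{ij} + Q_{ij}(g,\partial g),
\end{equation*}
where $Q$ is quadratic in $\partial g$ with coefficients smooth in $g$. So once harmonic coordinates exist with $g$ uniformly equivalent to $\delta$ and $g\in W^{1,p}$ with $p>n$, the system $g^{ab}\partial_a\partial_b g_{ij} = -2\mathrm{Ric}_{ij}+2Q_{ij}$ is uniformly elliptic. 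Its coefficients are $C^{0,1-n/p}$ by Morrey, so interior $L^p$ estimates apply. Bootstrapping with the bounds $|\nabla^j \mathrm{Rm}|\le c_j$ (rewritten in coordinates as bounds on $\partial^j\mathrm{Ric}_{ij}$ modulo lower-order terms in $\partial g$) upgrades control to $\|\partial^J g\|_{L^p}$ for $|J|\le k$ on a slightly smaller ball. This regularity step is routine once the coordinates exist.

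The main step is the contradiction argument. Suppose there are pointed manifolds $(M_i,g_i,x_i)$ satisfying the hypotheses with $r_i:=r_{k,p}(x_i)\to 0$. We rescale $\tilde g_i = r_i^{-2} g_i$. The third condition in Definition~\ref{def2.4} is scale invariant (that is why the weights $R^{|J|-n/p}$ appear), so $r_{k,p}^{\tilde g_i}(x_i)=1$. Meanwhile $|\tilde\nabla^j \widetilde{\mathrm{Rm}}|\le c_j r_i^{j+2}\to 0$ and $\tilde r_{inj}\ge r/r_i\to\infty$ on $\tilde B_{2r/r_i}(x_i)$.

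First one needs a uniform lower bound on the harmonic radius at all points near $x_i$, say on $\tilde B_{10}(x_i)$. To get it, I would choose the points by a maximality trick: replace $x_i$ by a point nearly minimizing $r_{k,p}(y)/d(y,\partial B)$ on the ball, so that the harmonic radius is comparable to $1$ on a large ball around the new base point. With this bound, the regularity above gives uniform $W^{k+1,p}$ bounds on the metrics in the harmonic charts. By Cheeger--Gromov compactness (with $C^{1,\alpha}$ and weak $W^{k+1,p}$ convergence of the charts), a subsequence converges to a complete limit $(M_\infty,g_\infty,x_\infty)$. The limit is flat, since $\mathrm{Ric}=0$ in harmonic coordinates forces smoothness and the curvature tends to zero, and it has infinite injectivity radius. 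Hence it is isometric to $\mathbb{R}^n$, whose harmonic radius is $+\infty$.

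The expected obstacle is the \emph{continuity of the harmonic radius under this convergence}: to contradict $r_{k,p}(x_i)=1$, one must show that for $i$ large there are harmonic coordinates on $\tilde B_{2}(x_i)$ satisfying the three conditions with strict room. I would construct them by solving the Dirichlet problem $\Delta_{\tilde g_i}\phi_i^j=0$ on $\tilde B_2(x_i)$ with boundary data given by the pullbacks of Euclidean coordinates of the limit. The $L^p$ theory in the converging charts gives $\phi_i\to$ identity in $W^{k+1,p}$ strongly on compact subsets; strong convergence follows from the equation together with strong convergence of the lower-order terms by Rellich. Then $g_i$ expressed in $\phi_i$ is close to $\delta$ in $W^{k,p}$, so the second and third conditions hold on $B_2$ for large $i$. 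This gives $r_{k,p}(x_i)\ge 2$, a contradiction, and yields $c=c(n,k,p,r,c_0,\dots,c_k)$.
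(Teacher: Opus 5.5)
Your proposal is correct and is essentially Anderson's blow-up argument. That is all the paper relies on here: Proposition~\ref{prop1.8} is quoted from \cite{And90} without proof, and your steps match that argument. These are scale invariance of the weighted $W^{k,p}$ condition, point-picking to control the harmonic radius near the base point, bootstrapping to $W^{k+1,p}$ through the harmonic-coordinate formula for $\mathrm{Ric}$, convergence to flat $\mathbb{R}^n$, and continuity of the harmonic radius via the Dirichlet problem plus Rellich. You state tersely that the limit has infinite injectivity radius, but this is fine: geodesics of the rescaled metrics are minimizing on intervals whose length tends to infinity, and minimality passes to $C^1$ limits of geodesics, so every limit geodesic is a line.
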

	
\begin{definition}\label{def4.1}
Suppose $0<\alpha<1$ and $l$ is a nonnegative integer. Suppose $s$ is a smooth section of a vector bundle $\mathcal{E}\to M$. Define
		\begin{equation*}
			[s]^{(r)}_{l+\alpha}(x) :=\sup_{d(x,x')<\min  \{r,4\}}\sup_{\gamma\in \Xi(x,x')}\frac{|\nabla^ls(x)-\tau_\gamma(x,x')\nabla^ls(x')|}{d^\alpha(x,x')}<\infty,
		\end{equation*}
        where $$\Xi(x,x') :=\{\text{the shortest geodesic line connecting }x \text{ and }x'\}.$$
		The operator $\tau_\gamma(x, x'): T_{x'}M \to T_xM$ represents the parallel displacement of the tensor field along the path $\gamma$, mapping the tangent space at $x'$ to the tangent space at $x$.
	\end{definition}

	\section{Heat equations in $W^{2,1}_p$ atalas}
	\label{sec3}
	
	In this section, we study fundamental estimates of heat solutions in harmonic coordinates.

	By comparing the geodesic curve connecting a boundary point to the origin, it is not hard to see from the second condition of Definition~\ref{def2.4} that 
	\begin{align}
		B_{0.5R}(0) \subset  B_{\frac{\sqrt{2}}{2}R}(0) \subset \phi_R (B_R(x)) \subset B_{\sqrt{2}R}(0)
		\label{eq3.1}   
	\end{align}
	for every $R<r_{k,p}(x)$.  
	Therefore, the Euclidean ball $B_{0.5R}(0) \subset \mathbb{R}^n$ is equipped with two metrics, the Euclidean metric $g_E$ and the push-forward metric $\phi_R^* g$.   For simplicity of notation, we still denote $\phi_R^* g$ by $g$.  In other words, we can regard the identity map from $(B_{0.5R}(0), g)$ to $(B_{0.5R}(0), g_E)$ as a harmonic map.   
	Thus, we have
	\begin{align}
		0=\Delta_g x^k=g^{ij} \left( \frac{\partial^2 x^k}{\partial x^i \partial x^j} -\Gamma_{ij}^l \frac{\partial x^k}{\partial x^l} \right)=-g^{ij}\Gamma_{ij}^k. 
		\label{eq3.2}    
	\end{align}
	The tensor $g_{ij}$ is now a matrix-valued function. In light of the relationship (\ref{eq3.1}), it follows from the definition of harmonic radius that the following inequalities hold. 
	\begin{align}
		&\frac12 \sum_{i=1}^n (V^i)^2 \leq   g_{ij}V^i V^j \leq 2\sum_{i=1}^n (V^i)^2, \quad \forall \; V \in \IR^n; \label{eq3.3} \\
		&\sum_{1\leq |J|\leq k}R^{|J|-\frac{1}{4}}\|\partial^J g_{ij}\|_{L^p(\ID)}\leq 1. \label{eq3.4} 
	\end{align}
	
	Suppose $R \geq 100$. Then $B_{0.5R}(0) \supset B_2(0)$. 
	Define
	\begin{align}
		\begin{cases}
			&B:= B_2(0), \quad B' :=B_1(0); \\
			&\Omega := B \times [0, 4], \quad \Omega' :=B' \times [3, 4]. 
		\end{cases}
		\label{eq3.5}    
	\end{align}
	
	\begin{definition}
		A local model space-time is a smooth family of metrics $g$ defined on $\Omega$ satisfying 
		\begin{align*}
			&\Delta_{g(0)} x^i=0 \; \textrm{on} \; B, \quad \forall \; i \in \{1, 2, \cdots, n\};\\
			&\sup_{\Omega} |Rm|(x) \leq \xi^2. 
		\end{align*}
		Here 
		\begin{align}
			0<\xi=\xi(n)<\frac{1}{100n \pi}     \label{eq3.8}
		\end{align}
		is a small positive constant such that the $W^{2,1}_p$ harmonic radius of $g$ is at least $100$.
		
		Furthermore,  we require that $g|_{B}$ satisfies (\ref{eq3.3}) and (\ref{eq3.4}) with $k=2$. 
		\label{def3.1}
	\end{definition}
	
	We shall study the behavior of the heat solutions in local model space-time. 
	
	The following definitions of the parabolic Sobolev and H\"older norms are well known.
	
	\begin{align}
		&\|u\|_{W_p^{2m,m}(\Omega)} 
		:=\sum_{|J|+2k\leq  2m} \|D^J \partial_t^ku\|_{L^p}, \label{eq3.9}\\
		&\|u\|_{C^{m+\alpha, \frac{m+\alpha}{2}}(\Omega)}
		:=\sum_{|J|+2k\leq m} \left(\|D^J\partial_t^ku\|_{C^0}+r^{\alpha}[D^J \partial_t^ku]_{\alpha,\frac{\alpha}{2}} \right),   \label{eq3.10}   
	\end{align}
	where $J$ is a multi-index.  We have the following Sobolev embedding properties. 
	
	\begin{lemma}
		\cite[Theorem 10.4.10]{Kry24}
		\label{lm3.2}
		For any $m \in \IN$, $n+2<p<\infty$ and $\alpha=1-(n+2)/p$, we have 
		\begin{align}
			W_p^{2m,m}(\Omega) \hookrightarrow C^{2m-1+\alpha,\frac{2m-1+\alpha}{2}}(\Omega).
			\label{eq3.11}     
		\end{align}
		In particular, there is a constant $C=C(n,m,p)>0$ such that 
		\begin{align}
			\|u\|_{C^{2m-1+\alpha,\frac{2m-1+\alpha}{2}}}
			\leq C  \|u\|_{W_p^{2m,m}(\Omega)}.
			\label{eq3.12}     
		\end{align}
	\end{lemma}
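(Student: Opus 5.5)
The plan is to reduce Lemma~\ref{lm3.2} to the standard anisotropic Sobolev embedding on a parabolic cylinder. We then bootstrap in $m$ by applying the base case to the derivatives $D^J\partial_t^k u$.

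\textbf{Base case $m=1$.} The claim is that $W^{2,1}_p(\Omega)\hookrightarrow C^{1+\alpha,\frac{1+\alpha}{2}}(\Omega)$ with $\alpha=1-(n+2)/p$. Since $\Omega=B_2(0)\times[0,4]$ is a Lipschitz cylinder, the first step is to extend $u$ to a function $Eu\in W^{2,1}_p(\mathbb{R}^{n+1})$ with compact support and $\|Eu\|_{W^{2,1}_p}\le C\|u\|_{W^{2,1}_p(\Omega)}$. In time this is done by reflection of order one, in space by a Stein-type or higher-order reflection extension after flattening the sphere. We then work with the parabolic metric $\rho((x,t),(y,s))=|x-y|+|t-s|^{1/2}$, for which $\mathbb{R}^{n+1}$ has homogeneous dimension $n+2$.

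The key tool is a parabolic Morrey estimate. Write $v=Eu$. Using the representation $v=\Gamma*(\partial_t-\Delta)v$ with $\Gamma$ the heat kernel, we have $|D\Gamma|\lesssim \rho^{-(n+1)}$ and $|D^2\Gamma|+|\partial_t\Gamma|\lesssim\rho^{-(n+2)}$ in parabolic scaling. Then $Dv=D\Gamma*f$ with $f=(\partial_t-\Delta)v\in L^p$. Hölder's inequality gives $|Dv|\le C\|f\|_{L^p}$, because $\rho^{-(n+1)p'}$ is locally integrable exactly when $(n+1)p'<n+2$, that is $p>n+2$. The difference estimate $|Dv(z)-Dv(z')|\le C\rho(z,z')^{\alpha}\|f\|_p$ follows by splitting the integral into the region $\rho(\cdot,z)<2\rho(z,z')$ and its complement. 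On the complement we use the mean value theorem for $D\Gamma$ and the bound $|D^2\Gamma|\lesssim\rho^{-(n+2)}$, together with $\int_{\rho>\delta}\rho^{-(n+2)p'}\sim\delta^{(n+2)(1-p')}$. This yields exactly the exponent $1-(n+2)/p=\alpha$.

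The same argument applied to $v$ itself, with kernel $\Gamma$, gives the bounds for $v$. In particular $v$ is $\frac{1+\alpha}{2}$-Hölder in $t$, since the relevant kernel difference is controlled by $\rho^{-n}$ weights. The Hölder seminorm of $Dv$ in $t$ of order $\alpha/2$ is already built into $\rho$. The $C^0$ bounds follow similarly, and the result is~\eqref{eq3.12} for $m=1$.

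\textbf{Induction.} For general $m$, every $w=D^J\partial_t^k u$ with $|J|+2k\le 2m-2$ lies in $W^{2,1}_p(\Omega)$, with norm at most $\|u\|_{W^{2m,m}_p}$. The base case then gives $w\in C^{1+\alpha,\frac{1+\alpha}{2}}$. Collecting these over all $(J,k)$ covers every term $D^J\partial_t^k u$ with $|J|+2k\le 2m-1$ in~\eqref{eq3.10}, and summing gives~\eqref{eq3.12}.

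\textbf{Main obstacle.} The main obstacle is the extension operator preserving the anisotropic $W^{2,1}_p$ norm across the curved lateral boundary. If it proves awkward, I would instead prove the interior estimate via a cutoff on slightly larger cylinders, or simply invoke Krylov's theorem as stated.
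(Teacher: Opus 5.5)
Your argument is correct. Note, though, that the paper gives no proof of Lemma~\ref{lm3.2}; it simply quotes Krylov's Theorem 10.4.10. You instead give the standard self-contained proof. You extend $u$ to a compactly supported $v\in W^{2,1}_p(\IR^{n+1})$ and write $v=\Gamma*(\partial_t-\Delta)v$. You then bound $v$ and $Dv$ by H\"older's inequality using the parabolic homogeneity of the heat kernel, and apply the case $m=1$ to every $D^J\partial_t^k u$ with $|J|+2k\le 2m-2$. This does cover all indices $|J|+2k\le 2m-1$ in \eqref{eq3.10}, since for $J=0$ the condition $2k\le 2m-1$ forces $2k\le 2m-2$. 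Your route makes visible why $p>n+2$ is needed (local $L^{p'}$-integrability of $\rho^{-(n+1)}$) and where $\alpha=1-(n+2)/p$ comes from. It also yields the $\frac{1+\alpha}{2}$-H\"older continuity of $u$ in $t$, which the paper's norm \eqref{eq3.10} does not even require. The citation buys brevity.

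There are two small points to fix. First, consider the far-region estimate for $Dv(z)-Dv(z')$, with $\delta=\rho(z,z')$. The mean value theorem along the segment from $z-w$ to $z'-w$ also sees the time increment. So besides $|D^2\Gamma|\le C\rho^{-(n+2)}$ you need $|\partial_t D\Gamma|\le C\rho^{-(n+3)}$ together with $|t-t'|\le\delta^2$. This term contributes $\delta^2\cdot\delta^{(n+2)/p'-(n+3)}=\delta^{\alpha}$, the same exponent, so nothing breaks; but the spatial Hessian bound alone does not control it. Second, the extension you flag as the main obstacle is harmless because of the product structure of $\Omega$. Even reflection in $t$ preserves $W^{2,1}_p$, since only one time derivative is involved. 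A spatial extension operator $E_x$ (higher-order reflection after flattening $\partial B_2(0)$, bounded on $L^p$ as well as on $W^{2,p}$) can be applied slice by slice in $t$. It then commutes with $\partial_t$, giving $\|\partial_t E_x u\|_{L^p}\le C\|\partial_t u\|_{L^p}$. So you do not need the interior cut-off fallback, which would in any case only give a weaker statement than the lemma on all of $\Omega$.
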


	\begin{lemma}\cite[Theorem 7.2]{Sch}
		\label{lm3.3}
		Let $u\in C^{2,1}(\Omega)$ be a classical solution of the following parabolic system
		\begin{align*}
			\begin{cases}
				&u_t^{\mu}-A_{ij}^{\mu\nu}D_{ij}u^\nu +B_i^{\mu\nu}D_iu^{\nu}+C^{\mu\nu}u^\nu=f^\mu, \quad \text{in} \; \Omega;  \\
				&u=0, \quad \text{on} \; \partial \Omega.    
			\end{cases}
		\end{align*}
            Suppose that the following conditions hold
		\begin{align*}
			\begin{cases}
				&A_{ij}^{\mu\nu}\in C^0(\Omega); \\
				&\Theta^{-1}|V|^2\leq a^{\mu\nu}_{ij}V^i_\mu V^j_\nu\leq \Theta |V|^2,\quad \forall \; V \in \mathbb{R}^n;  \\
				&\|A_{ij}^{\mu\nu}\|_{L^\infty(\Omega)}+\|B_i^{\mu\nu}\|_{L^\infty(\Omega
					)}+\|C^{\mu\nu}\|_{L^\infty(\Omega)} \leq \Theta.
			\end{cases}    
		\end{align*}
		Then
		\begin{equation*}
			\begin{split}
				\|u\|_{W_p^{2,1}(\Omega)}
				\leq C \left\{\|u\|_{L^p(\Omega)}+\|f\|_{L^p(\Omega)} \right\}, 
			\end{split}
		\end{equation*}
		where $C=C(n,p,\Theta)$. 
	\end{lemma}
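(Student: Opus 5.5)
The plan is the classical Calder\'on--Zygmund route to $L^p$ theory for parabolic systems: freeze the coefficients, treat the constant-coefficient model by singular integrals, and patch with a partition of unity. The data $u=0$ on $\partial\Omega$ (lateral boundary and initial slice) are used for the boundary model problems.

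\textbf{Step 1: constant-coefficient model.} Take $A^{\mu\nu}_{ij}$ constant, satisfying the ellipticity bound with constant $\Theta$, and set $B=C=0$. Then $u_t^\mu - A^{\mu\nu}_{ij}D_{ij}u^\nu = f^\mu$ has an explicit fundamental matrix $\Gamma(x,t)$, obtained from the Fourier symbol $(i\tau + A(\xi,\xi))^{-1}$. The second derivatives $D^2\Gamma$ form a parabolic Calder\'on--Zygmund kernel, homogeneous of degree $-(n+2)$ under parabolic scaling, with cancellation. By the parabolic singular integral theorem,
\[
\|D^2u\|_{L^p(\mathbb R^{n+1})}+\|u_t\|_{L^p(\mathbb R^{n+1})}\le C(n,p,\Theta)\|f\|_{L^p},
\]
for the whole-space problem with $u(\cdot,0)=0$. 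For the half-space with zero Dirichlet data I would use the standard Solonnikov-type Poisson kernel estimates, or odd reflection when the coefficients permit it. This gives the same bound near the lateral boundary after flattening $\partial B$, which is smooth since $B$ is a Euclidean ball.

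\textbf{Step 2: freezing and localization.} Because $A^{\mu\nu}_{ij}\in C^0(\overline\Omega)$ and $\overline\Omega$ is compact, $A$ is uniformly continuous. Choose $\delta>0$ so that the oscillation of $A$ on every parabolic cylinder $Q_\delta$ of size $\delta$ is at most $\varepsilon$. Cover $\Omega$ by finitely many such cylinders and take a subordinate partition of unity $\{\eta_k\}$. On each piece, $\eta_k u$ solves the frozen system at the center $(x_k,t_k)$ with right-hand side
\[
\eta_k f+\bigl(A(x,t)-A(x_k,t_k)\bigr)D^2(\eta_k u)+\text{terms in } Du,\ u,\ \text{derivatives of }\eta_k .
\]
Step 1 then bounds $\|\eta_k u\|_{W^{2,1}_p}$ by
\[
\varepsilon\,\|D^2(\eta_k u)\|_{L^p}+C_\delta\bigl(\|f\|_{L^p}+\|Du\|_{L^p}+\|u\|_{L^p}\bigr),
\]
where the $B$ and $C$ terms are controlled by $\Theta$. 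With $\varepsilon$ small, the first term is absorbed into the left-hand side. Summing over the finitely many $k$ gives
\[
\|u\|_{W^{2,1}_p(\Omega)}\le C\bigl(\|f\|_{L^p}+\|Du\|_{L^p}+\|u\|_{L^p}\bigr).
\]

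\textbf{Step 3: removing $Du$.} The interpolation inequality
\[
\|Du\|_{L^p}\le \epsilon\|D^2u\|_{L^p}+C_\epsilon\|u\|_{L^p}
\]
on the smooth bounded domain $B$, applied at each time and integrated in $t$, lets the gradient term be absorbed as well. This yields the claimed estimate with $C=C(n,p,\Theta)$, where the dependence also includes the modulus of continuity of $A$ through $\delta$.

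\textbf{Main obstacle.} I expect the boundary model problem in Step 1 to be the hardest part. For a genuine system the coefficients couple the components, so reflection need not preserve the equation, and one must estimate the half-space Green matrix directly. The strong ellipticity hypothesis ensures the Lopatinskii--Shapiro (complementing) condition for Dirichlet data, and I would invoke the half-space kernel bounds at that point. The corner where the lateral boundary meets $t=0$ is harmless because $u=0$ there, so extension by zero to $t<0$ keeps $u$ a solution.
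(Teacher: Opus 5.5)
The paper gives no argument of its own for this lemma. It is quoted from Schlag \cite{Sch}, who reaches the $L^p$ theory through Campanato spaces rather than singular integrals. There the coefficients are frozen as in your proposal. For the constant-coefficient system, the bounds $L^2\to L^2$ and $L^\infty\to\mathcal{L}^{2,n+2}$ (parabolic BMO) for $f\mapsto(D^2u,u_t)$ come from energy (Caccioppoli) and Poincar\'e inequalities alone, in the interior and at a flat Dirichlet boundary. $L^p$ then follows by Stampacchia-type interpolation and duality.

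You instead take the classical Calder\'on--Zygmund/Solonnikov route:
\begin{itemize}
\item an explicit fundamental matrix and parabolic singular integrals in the whole space;
\item half-space Green-matrix bounds under the complementing condition;
\item freezing, a partition of unity, and the interpolation inequality to absorb $Du$.
\end{itemize}
This is correct, and your Steps 2 and 3 are sound as written.

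The trade-off sits where you placed your main obstacle. Your boundary model problem needs Solonnikov's half-space estimates for systems, plus the check of Lopatinskii--Shapiro for Dirichlet data under strong ellipticity. The Campanato method never needs a kernel. At a flat boundary with $u=0$, the Legendre condition gives coercivity directly. Tangential and time derivatives are handled by difference quotients. $D_{nn}u$ is recovered from the equation, since the Legendre condition makes the matrix $(A^{\mu\nu}_{nn})$ invertible. That framework also yields the Schauder theory. Your route, in return, is a direct representation-formula argument that plugs into the standard Ladyzhenskaya--Solonnikov--Ural'tseva literature.

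Your caveat that $C$ also depends on the modulus of continuity of $A$ is a necessary correction to the statement, not a defect of your argument. If a bound with $C=C(n,p,\Theta)$ held for all continuous $A$, mollifying the coefficients would transfer it to bounded measurable $A$, where $W^{2,1}_p$ estimates fail in general. In the paper's application (Proposition~\ref{prop3.7}), the leading coefficients are $g^{kl}\delta^{\mu\nu}$, which are uniformly H\"older by Lemma~\ref{lm3.4}. So this extra dependence is harmless there.
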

	
	Then we start to analyze the regularity of the metrics and PDE solutions on $\Omega$.    
	
	\begin{lemma}
		\label{lm3.4}
		Based on the choice of $\Omega$ and $g$, the following estimates hold.
		\begin{align}
			\|g\|_{C^{1+\alpha, \frac{1+\alpha}{2}}(\Omega)} \leq C. 
			\label{eq3.14}    
		\end{align}
	\end{lemma}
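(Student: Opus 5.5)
The plan is to deduce the estimate from the Sobolev embedding of Lemma~\ref{lm3.2}, using the spatial bounds on $g$ that come from the harmonic radius normalization. The key observation is that in a local model space-time the metric coefficients are controlled in $W^{2,1}_p$, so that $W^{2,1}_p(\Omega)\hookrightarrow C^{1+\alpha,\frac{1+\alpha}{2}}(\Omega)$ gives (\ref{eq3.14}) directly. I regard $g_{ij}$ as a matrix-valued function on $\Omega=B\times[0,4]$ and treat each component separately. The estimate then reduces to bounding $\|g_{ij}\|_{W^{2,1}_p(\Omega)}$, that is, $\|g_{ij}\|_{L^p}$, $\|Dg_{ij}\|_{L^p}$, $\|D^2g_{ij}\|_{L^p}$ and $\|\partial_t g_{ij}\|_{L^p}$ on $\Omega$, by a constant depending only on $n,p$.

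For the zeroth-order term I use (\ref{eq3.3}). Since $|g_{ij}|\le 2$ pointwise, $\|g_{ij}\|_{L^p(\Omega)}\le 2|\Omega|^{1/p}\le C(n,p)$. For the spatial derivatives I use (\ref{eq3.4}) with $k=2$. Since $R\ge 100\ge 1$, the weights $R^{|J|-\frac14}$ are at least $1$, so $\|D g_{ij}\|_{L^p(B)}+\|D^2 g_{ij}\|_{L^p(B)}\le 1$ on each time slice. Integrating over $t\in[0,4]$ gives $\|Dg\|_{L^p(\Omega)}+\|D^2 g\|_{L^p(\Omega)}\le 4^{1/p}$. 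The same slice-wise bound also keeps the coefficients $A^{\mu\nu}_{ij}=g^{ij}\delta^{\mu\nu}$ uniformly elliptic, with the constant $\Theta$ of Lemma~\ref{lm3.3}.

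The remaining ingredient is $\partial_t g$, and this is the step I expect to be the main obstacle, since Definition~\ref{def3.1} places no explicit constraint on the time dependence. My first attempt reads the family as essentially time-independent in the application: the heat equation is posed on a fixed Riemannian manifold, so $g(t)\equiv g(0)$, $\partial_t g=0$, and the parabolic norm reduces to the spatial one. If instead a genuinely evolving family is intended, for example a Ricci-flow-type normalization with $\partial_t g=-2\mathrm{Ric}$, I would bound $|\partial_t g|\le C(n)|Rm|\le C(n)\xi^2$ using $\sup_\Omega|Rm|\le\xi^2$. This again gives $\|\partial_t g\|_{L^p(\Omega)}\le C(n,p)$.

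Collecting the bounds gives $\|g_{ij}\|_{W^{2,1}_p(\Omega)}\le C(n,p)$. Lemma~\ref{lm3.2} with $m=1$ and $\alpha=1-(n+2)/p$ then yields $\|g_{ij}\|_{C^{1+\alpha,\frac{1+\alpha}{2}}(\Omega)}\le C$. Summing over $i,j$ gives (\ref{eq3.14}). If the embedding constant is sensitive to the domain, I would first extend $g_{ij}$ slightly in time by reflection, which costs only a constant factor. The same bound also holds for $g^{ij}$, because matrix inversion is smooth on the set of matrices satisfying (\ref{eq3.3}).
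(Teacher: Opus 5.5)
Your argument is correct and is the one the paper leaves implicit: Lemma~\ref{lm3.4} is stated there without proof, right after the embedding Lemma~\ref{lm3.2}, with the $W^{2,1}_p$ bound on $g$ meant to come from (\ref{eq3.3}) and from (\ref{eq3.4}) with $k=2$. Your static reading $\partial_t g\equiv 0$ is the one that matters, since the model metric $\bar g$ in Section~\ref{sec4} is the pull-back of a fixed metric, so you do not need the Ricci-flow alternative.
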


	\begin{lemma}\label{lm3.5}
		Suppose $u$ is a smooth tensor on $B$.  Then 
		\begin{align}
			\sup_{x \in B'} \left\{ |\nabla u|_g(x) +  +[ \nabla u]_{g,\alpha}(x) \right\} \leq C(n) \|u\|_{C^{1+\alpha}(B)}. 
			\label{eq3.15}  
		\end{align}
		Note that 
		\begin{align}
			[\nabla u]_{g,\alpha}(x) := \sup_{y \in B_{\frac12}(x)}  \frac{|\nabla u(x)-\tau_{\gamma}(x,y)\nabla u(y)|}{|\gamma|^{\alpha}},   
			\label{eq3.16}  
		\end{align}
		where $\gamma$ is any shortest geodesic connecting $x$ and $y$ under the metric $g$, $\tau_{\gamma}(x,y)$ is the parallel transportation from $y$ to $x$ along the geodesic $\gamma$. 
	\end{lemma}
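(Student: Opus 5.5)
We want to bound $|\nabla u|_g$ and the Hölder seminorm of $\nabla u$ in the harmonic chart by the Euclidean $C^{1+\alpha}(B)$ norm of the component functions $u_{i_1\cdots i_k}$. The plan is a direct computation in the coordinates: write $\nabla u = \partial u + \Gamma * u$, then control the Christoffel symbols and the parallel transport using Lemma~\ref{lm3.4}.

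First, the pointwise term. In the harmonic coordinates,
\[
(\nabla u)_{l i_1\cdots i_k} = \partial_l u_{i_1\cdots i_k} - \sum_{s}\Gamma^{m}_{l i_s} u_{i_1\cdots m\cdots i_k}.
\]
By Lemma~\ref{lm3.4}, $g$ is bounded in $C^{1+\alpha}$, so $\Gamma = \tfrac12 g^{-1}*\partial g$ is bounded in $C^{\alpha}(B)$. Together with (\ref{eq3.3}), which lets us compare $|\cdot|_g$ with Euclidean component norms up to a factor $C(n)$, this gives $|\nabla u|_g \le C(\|\partial u\|_{C^0}+\|u\|_{C^0})\le C\|u\|_{C^{1+\alpha}(B)}$ on $B'$. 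The same formula also gives a Euclidean bound $\|\nabla u\|_{C^\alpha(B)}\le C\|u\|_{C^{1+\alpha}(B)}$ on the components, since products of $C^\alpha$ functions are $C^\alpha$.

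Next, the Hölder term. Fix $x\in B'$ and $y\in B_{1/2}(x)$ (a $g$-ball), and let $\gamma$ be a minimizing $g$-geodesic. By (\ref{eq3.3}), $\gamma$ has Euclidean length comparable to $|\gamma|$. We would like $\gamma\subset B$; this holds after using (\ref{eq3.3}), or else we shrink the radius harmlessly. Split
\[
\nabla u(x)-\tau_\gamma\nabla u(y) = \big(\nabla u(x)-\nabla u(y)\big) + \big(\nabla u(y)-\tau_\gamma \nabla u(y)\big),
\]
where the first difference is taken componentwise in the chart. The first term is at most $C|x-y|^\alpha\|\nabla u\|_{C^\alpha}\le C|\gamma|^\alpha\|u\|_{C^{1+\alpha}}$.

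For the second term, write $T(t)$ for the component matrix of parallel transport along $\gamma$ from $y$ to $\gamma(t)$. It solves the linear ODE $\dot T = -\Gamma(\gamma)\dot\gamma * T$ with $T(0)=\mathrm{Id}$. Since $|\Gamma|\le C$ and $|\dot\gamma|_E\le C$, Grönwall gives $|T(|\gamma|)-\mathrm{Id}|\le C|\gamma|\le C|\gamma|^\alpha$ because $|\gamma|\le 1$. Hence the second term is bounded by $C|\gamma|^\alpha|\nabla u(y)|\le C|\gamma|^\alpha\|u\|_{C^{1+\alpha}}$. Dividing by $|\gamma|^\alpha$ and taking the supremum over $y$ and $x$ completes the estimate.

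The main obstacle is the geometric bookkeeping: ensuring that minimizing geodesics between points of $B'$ at $g$-distance less than $1/2$ stay inside $B=B_2(0)$, and that their Euclidean speed is bounded. I would handle both with (\ref{eq3.3}). Any curve leaving $B_2(0)$ from $x\in B_1(0)$ has $g$-length at least $2^{-1/2}$, which exceeds $1/2$, so $\gamma$ stays in $B$. Arclength parametrization gives $|\dot\gamma|_E\le\sqrt2$. Only $C^0$ bounds on $\Gamma$ are needed for the transport step; the $C^\alpha$ bound on $\Gamma$ is used only for the Hölder bound on the components of $\nabla u$.
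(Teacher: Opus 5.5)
Your proposal is correct and follows essentially the same route as the paper. The paper also splits $\nabla u(x)-\tau_\gamma\nabla u(y)$ into the componentwise difference $\nabla u(x)-\nabla u(y)$, controlled via $\nabla u=\partial u-\Gamma*u$ and the $C^{1+\alpha}$ bound on $g$ from Lemma~\ref{lm3.4}, and the deviation $\nabla u(y)-\tau_\gamma\nabla u(y)$ of parallel transport from the identity, controlled by Gr\"onwall on the transport ODE as in Lemma~\ref{lm3.6}. Your write-up is slightly more careful than the paper's in three places: you keep the Christoffel terms in the pointwise bound for $|\nabla u|_g$, you bound $|\nabla u(y)|$ by $\|u\|_{C^{1+\alpha}}$ rather than $\|u\|_{C^0}$, and you justify $\gamma\subset B$ directly from (\ref{eq3.3}).
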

	
	\begin{proof}
		On $B'$, it is clear that
		\begin{align}
			\frac12 |\partial u|^2  
			\leq  |\nabla u|_g^2= g^{ij}u_i u_j \leq 2 |\partial u|^2 
			\label{eq3.17}  
		\end{align}
		holds point-wisely. 
	
		Now fix $x\in B'$ and $y \in B_{\frac12}(x)$.  
		Let $\gamma$ be a unit-speed shortest geodesic connecting $x$ and $y$ such that $\gamma(0)=x$ and $\gamma(L)=y$.  Under the small curvature assumption, we know $\gamma \subset B$. 
		We want to show
		\begin{align}
			|u_{i_1\cdots i_j,i}(x)-\tau_{\gamma}(x,y)u_{i_1\cdots i_j,i}(y)| \leq C \|u\|_{C^{1+\alpha}(B)} \cdot L^{\alpha}. 
			\label{eq3.18}   
		\end{align}
		For simplicity of notation, we define
		\begin{equation*}
			\Upsilon(\gamma(\tau)):=\tau_\gamma(\gamma(\tau),y) \nabla u(y)=\Upsilon_{i_1\cdots i_{j+1}}(\gamma(\tau))dx^{i_1}\otimes dx^{i_2} \cdots\otimes  dx^{i_j}\otimes  dx^{j+1}.
		\end{equation*}
		From definition, it is clear that $\Upsilon_{i_1\cdots i_j,i}(y)=u_{i_1\cdots i_j,i}(y)$. 
		Then we have 
		\begin{equation}
			\begin{split}
				&\quad |u_{i_1\cdots i_j,i}(x)-\tau_{\gamma}(x,y)u_{i_1\cdots i_j,i}(y)|\\
				&=|u_{i_1\cdots i_j,i}(y)(x)-\Upsilon_{i_1\cdots i_j,i}(x)|\\
				&\leq |u_{i_1\cdots i_j,i}(y)(x)-\Upsilon_{i_1\cdots i_j,i}(y)| + |\Upsilon_{i_1\cdots i_j,i}(y)-\Upsilon_{i_1\cdots i_j,i}(x)|\\
				&=\underbrace{|u_{i_1\cdots i_j,i}(y)(x)-u_{i_1\cdots i_j,i}(y)(y)|}_{I} + \underbrace{|\Upsilon_{i_1\cdots i_j,i}(y)-\Upsilon_{i_1\cdots i_j,i}(x)|}_{II}. 
			\end{split}
			\label{eq3.20}
		\end{equation}
		For part $I$, we know 
		\begin{equation*}
			\begin{split}
				I&=\left|\partial_i u_{i_1\cdots i_j}(x)-\partial_i u_{i_1\cdots i_j}(y) - (\Gamma_{ii_k}^m(x) u_{i_1\cdots m\cdots i_j}(x) -\Gamma_{ii_k}^m(y)u_{i_1\cdots m\cdots i_j}(y)) \right|\\
				&\leq C L^{\alpha} \left\{ \|\partial u\|_{C^{\alpha}(B)} + \|\partial g\|_{C^0(B)}
				\| u\|_{C^{\alpha}(B)} + \| u\|_{C^0(B)}
				\|\partial g\|_{C^{\alpha}(B)} \right\}. 
			\end{split}    
		\end{equation*}
		In short, we have
		\begin{align}
			I \leq CL^{\alpha} \|u\|_{C^{1+\alpha}(B)}. 
			\label{eq3.21}
		\end{align}
		For part II, we note that $\Upsilon$ satisfies the following equation of parallel transportation
		\begin{equation}\label{eq3.22}
			\begin{split}
				\frac{d \Upsilon_{i_1 \dots i_{j+1}}}{d\tau}-\sum_{r=1}^j \Gamma_{ki_r}^m \Upsilon_{i_1 \dots (m)_r \dots i_{j+1}} \frac{dx^k(\gamma(\tau))}{d\tau}=0
			\end{split}
		\end{equation}
		Note that 
		\begin{align*}
			\left\|\Gamma^{j}_{l k}\frac{dx^k(\gamma(\tau))}{d\tau} \right\|_{C^0(\gamma)} 
			\leq C \|\partial g\|_{C^0(B)}
			\leq C. 
		\end{align*}
		Applying Gronwall's inequality (cf. (\ref{eq3.26}) in Lemma~\ref{lm3.6}) on the ODE system (\ref{eq3.22}), we obtain
		\begin{align*}
			\|\Upsilon\|_{C^0(\gamma)} \leq C |\Upsilon(y)|  
			\leq C \|u\|_{C^0(B)} \leq C \|u\|_{C^{1+\alpha}(B)}. 
		\end{align*}
		Thus, the standard ODE estimates (cf. (\ref{eq3.27}) in Lemma~\ref{lm3.6}) imply that 
		\begin{align}
			II \leq C \|\Upsilon\|_{C^0(\gamma)} \cdot L
			\leq C \|u\|_{C^{2+\alpha}(B)} \cdot L^{\alpha}, 
			\label{eq3.24}
		\end{align}
		where we used the facts $L \in (0, 2)$ and $\alpha \in (0,1)$. 
		
		Plugging (\ref{eq3.21}) and (\ref{eq3.24}) into (\ref{eq3.20}), we obtain (\ref{eq3.18}). 
		Combining (\ref{eq3.18}) with (\ref{eq3.17}) and (\ref{eq3.18}), we arrive at (\ref{eq3.15}). 
	\end{proof}

	\begin{lemma}
		\label{lm3.6}
		Let $A\in C([0,L],\IR^{n^j\times n^j})$ and $x(\tau) \in C^1([0,L],\IR^{n^j})$ satisfy
		\begin{align}
			\dot{x}(\tau)=A(\tau)x(\tau),\quad x(0)=x_0.
			\label{eq3.25}    
		\end{align}
		Then the following estimate holds:
		\begin{align}
			&|x(\tau)| \leq \exp\bigg(\int_{0}^{\tau}|A(s)|ds\bigg) \cdot |x_0|, \label{eq3.26} \\
			&|x(\tau)-x(0)| \leq  \|A\|_{C^0[0,L]} \cdot \|x\|_{C^0[0,L]} \cdot \tau. \label{eq3.27}
		\end{align}
	\end{lemma}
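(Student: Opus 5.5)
The statement is the Grönwall estimate for a linear ODE system, so I would prove both inequalities directly from $\dot x = A x$ using the matrix operator norm $|A(s)|$ and the Euclidean norm on $\IR^{n^j}$.

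For (\ref{eq3.26}), I would first regularize and set $\phi(\tau) := (|x(\tau)|^2+\varepsilon)^{1/2}$ for $\varepsilon>0$. This function is $C^1$, and
\begin{equation*}
\dot\phi(\tau) = \frac{\langle x(\tau), A(\tau)x(\tau)\rangle}{\phi(\tau)} \leq |A(\tau)|\,\frac{|x(\tau)|^2}{\phi(\tau)} \leq |A(\tau)|\,\phi(\tau).
\end{equation*}
Multiplying by the integrating factor $\exp\bigl(-\int_0^\tau |A(s)|\,ds\bigr)$ shows that $\phi(\tau)\exp\bigl(-\int_0^\tau|A(s)|\,ds\bigr)$ is nonincreasing. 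Hence
\begin{equation*}
\phi(\tau) \leq \phi(0)\exp\Bigl(\int_0^\tau |A(s)|\,ds\Bigr) = (|x_0|^2+\varepsilon)^{1/2}\exp\Bigl(\int_0^\tau |A(s)|\,ds\Bigr).
\end{equation*}
Letting $\varepsilon\to 0$ gives (\ref{eq3.26}). The only delicate point is that $|x|$ need not be differentiable where $x=0$, and the $\varepsilon$-regularization is exactly what removes that issue. Alternatively, one could apply the integral form of Grönwall's inequality to $|x(\tau)| \leq |x_0| + \int_0^\tau |A(s)|\,|x(s)|\,ds$.

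For (\ref{eq3.27}), I would integrate the equation:
\begin{equation*}
x(\tau)-x(0) = \int_0^\tau A(s)x(s)\,ds,
\end{equation*}
so that
\begin{equation*}
|x(\tau)-x(0)| \leq \int_0^\tau |A(s)|\,|x(s)|\,ds \leq \|A\|_{C^0[0,L]}\,\|x\|_{C^0[0,L]}\,\tau.
\end{equation*}
Neither step is a real obstacle. The only thing to keep straight is that $|A|$ denotes the operator norm, or any norm dominating it, such as the Frobenius norm; using the Frobenius norm only strengthens the bound.
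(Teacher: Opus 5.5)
Your proof is correct and follows the paper's argument. The paper writes $x(\tau)=x_0+\int_0^\tau A(s)x(s)\,ds$, applies the integral form of Grönwall's inequality to $|x(\tau)|\le |x_0|+\int_0^\tau |A(s)|\,|x(s)|\,ds$ to get (\ref{eq3.26}), which is your stated alternative, and reads off (\ref{eq3.27}) from the same integral identity exactly as you do. Your $\varepsilon$-regularized differential version is simply a self-contained proof of that Grönwall step, and it cleanly handles the possible non-differentiability of $|x|$ at zeros of $x$.
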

	
	\begin{proof}
		The differential equation (\ref{eq3.25}) can be rewritten as the integral equation
		\begin{align}
			x(\tau) =x_0+\int_{0}^{\tau}A(s)x(s)ds, 
			\label{eq3.28}    
		\end{align}
		which implies
		\begin{align*}
			|x(\tau)|\leq |x_0|+\int_{0}^{\tau} |A(s)|\cdot |x(s)| ds.
		\end{align*}
		Applying Gronwall's inequality to the above inequality, we obtain (\ref{eq3.26}).
		It is clear that (\ref{eq3.27}) follows from (\ref{eq3.28}). 
	\end{proof}

	\begin{proposition}\label{prop3.7}
		Suppose $u=u_{i_1\cdots i_j}dx^{i_1}\otimes dx^{i_2} \cdots\otimes  dx^{i_j}$ satisfies
		\begin{align}
			(\partial_t -\Delta_j) u=0, \quad \textrm{on} \quad \Omega. 
			\label{eq3.29}    
		\end{align}
		Then 
		\begin{align}
			\|u\|_{W_p^{2,1}(\Omega')} \leq C(n)\|u\|_{L^{\infty}(\Omega)}. 
			\label{eq3.30}
		\end{align}    
	\end{proposition}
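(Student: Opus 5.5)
We write the equation $(\partial_t-\Delta_j)u=0$ in the harmonic coordinates $x^1,\dots,x^n$ of the local model space-time. By the Bochner--Lichnerowicz formula \eqref{eq2-10}, $\Delta_j=\nabla^*\nabla+V_j$. Since $-g^{ij}\Gamma^k_{ij}=0$ by \eqref{eq3.2}, in coordinates the system takes the form
\begin{equation*}
\partial_t u_{I}-g^{ab}\partial_a\partial_b u_{I}+B^{J}_{I,a}\,\partial_a u_{J}+C^{J}_{I}\,u_{J}=0 .
\end{equation*}
Here $B$ is linear in the Christoffel symbols, so it is a combination of $g^{-1}\partial g$. The term $C$ involves $\partial\Gamma$, $\Gamma\ast\Gamma$ and $V_j$, which is bounded by $|Rm|\le\xi^2$. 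The principal part $g^{ab}\delta^J_I$ is diagonal and uniformly elliptic by \eqref{eq3.3}. It is also continuous, indeed $C^{1+\alpha,\frac{1+\alpha}{2}}$, by Lemma~\ref{lm3.4}. The same lemma gives $\|B\|_{L^\infty(\Omega)}\le C$.

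The main obstacle is the zeroth-order coefficient $C$. It contains $\partial\Gamma\sim\partial^2 g$, which is only in $L^p$ by \eqref{eq3.4} and not in $L^\infty$, so Lemma~\ref{lm3.3} does not apply directly. My first attempt is to avoid writing $\partial\Gamma$ in the zeroth-order term and to treat $\Delta_j$ in a divergence-friendly way. The remaining derivatives of $\Gamma$ always combine into curvature terms (the Weitzenböck term) plus $\Gamma\partial u$ and $\Gamma\Gamma u$. I would double check that the expansion of $\nabla^*\nabla u$ in coordinates produces $\partial\Gamma$ only through $g^{ab}\partial_a\Gamma_{b\cdot}^{\cdot}$. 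If it does, I move this term to the right-hand side as $f:=-(\partial\Gamma)\ast u$. Then $\|f\|_{L^p(\Omega)}\le \|\partial^2 g\|_{L^p}\|u\|_{L^\infty(\Omega)}\le C\|u\|_{L^\infty(\Omega)}$, which is exactly the form Lemma~\ref{lm3.3} tolerates. The coefficients that stay on the left-hand side ($A$, $B$, and $C'=\Gamma\ast\Gamma+V_j$) are then bounded by a constant $\Theta=\Theta(n)$.

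Lemma~\ref{lm3.3} is stated with zero boundary data, so I localize with a cutoff. Take $\eta\in C_c^\infty$ with $\eta\equiv1$ on $\Omega'=B'\times[3,4]$, supported in a slightly larger cylinder such as $B_{3/2}\times(2,4]$, with $\eta=0$ on the parabolic boundary. Then $w=\eta u$ solves the same system with right-hand side
\begin{equation*}
\eta f+(\partial_t\eta)u-2g^{ab}\partial_a\eta\,\partial_b u-(g^{ab}\partial_{ab}\eta)u+B\,(\partial\eta)u .
\end{equation*}
Lemma~\ref{lm3.3} then bounds $\|w\|_{W^{2,1}_p}$ by $C(\|u\|_{L^\infty}+\|\partial u\|_{L^p(\operatorname{supp}\eta)})$. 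The gradient term is absorbed by the usual nested-cutoff argument. On a sequence of cylinders $\Omega'\subset\Omega_1\subset\Omega_2\subset\Omega$, we interpolate $\|\partial u\|_{L^p(\Omega_1)}\le \epsilon\|u\|_{W^{2,1}_p(\Omega_2)}+C_\epsilon\|u\|_{L^p(\Omega_2)}$. Then either a standard iteration lemma over shrinking radii is used, or we start from the interior $L^2$ energy (Caccioppoli) bound $\|\partial u\|_{L^2}\le C\|u\|_{L^\infty}$ and bootstrap $p$ up through finitely many steps via Sobolev embedding.

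Finally, $\|u\|_{L^p(\Omega)}\le C\|u\|_{L^\infty(\Omega)}$ because $\Omega$ has bounded measure. Combining these gives $\|u\|_{W^{2,1}_p(\Omega')}\le C(n)\|u\|_{L^\infty(\Omega)}$. Here $p$ is the fixed exponent in Definition~\ref{def3.1}, so the constant depends only on $n$.
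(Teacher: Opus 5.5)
Your proposal is correct and follows essentially the same route as the paper. You write the system in harmonic coordinates, use uniform parabolicity from \eqref{eq3.3} and the $L^\infty$ bound on $\Gamma$ from Lemma~\ref{lm3.4}, move the $\partial\Gamma\ast u$ term (the only place $\partial^2 g$ enters) to the right-hand side, where it is bounded in $L^p$ by $\|\partial^2 g\|_{L^p}\|u\|_{L^\infty}$, and apply Lemma~\ref{lm3.3}; the one thing you add is the cutoff-and-absorption step that turns the zero-boundary-data estimate of Lemma~\ref{lm3.3} into the interior estimate on $\Omega'$, which the paper uses implicitly without comment.
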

	
	\begin{proof}
		In the given $W_p^{2,1}$-harmonic coordinate system of $g$,  equation (\ref{eq3.29}) can be written as 
			\begin{equation*}
			\begin{split}
				\partial_t u_{i_1\cdots  i_j}&-g^{kl}\partial^2_{kl}u_{i_1\cdots  i_j}-g^{kl}\sum_{s=1}^j\partial_{k}u_{i_1\cdots i_{s-1}mi_{s+1}\cdots i_j}\Gamma^m_{i_sl}\\
				&-g^{kl}\sum_{s=1}^j\partial_{l}u_{i_1\cdots i_{s-1}mi_{s+1}\cdots i_j}\Gamma^m_{i_sk}+g^{kl}\partial_mu_{i_1\cdots  i_j}\Gamma^m_{kl}  \\
				&=g^{kl}\sum_{s=1}^ju_{i_1\cdots i_{s-1}ni_{s+1}\cdots i_j}\Gamma^n_{i_sm}\Gamma^m_{kl}     \\  
				&-g^{kl}\sum_{s=1}^j(\partial_l\Gamma^m_{i_sk}-\Gamma^n_{i_sl}\Gamma^m_{nk}) u_{i_1\cdots i_{s-1}mi_{s+1}\cdots i_j}+(Rm*u)_{i_1\cdots  i_j}, 
			\end{split}
		\end{equation*}
		Note that $\frac12 \delta_{kl} \leq g_{kl} \leq 2 \delta_{kl}$ in $B$, which guaranties the uniform parabolic condition. 
		Also note that $\|\Gamma\|_{L^{\infty}(\Omega)} \leq C(n)$ by (\ref{eq3.14}). 
		Furthermore, the right hand side can be bounded by
		\begin{align*}
			 C\|u\|_{L^{\infty}(\Omega)} \cdot \left\{1+\|\partial \partial g\|_{L^p(\Omega)}\right\}
			\leq C \|u\|_{L^{\infty}(\Omega)}. 
		\end{align*}
		Therefore, by the $W_p^{2,1}$ estimate for parabolic systems (cf. Lemma~\ref{lm3.3}), we have
		\begin{align*}
			\|u\|_{W_p^{2,1}(\Omega')} 
			\leq C  \|u\|_{L^{\infty}(\Omega)},  
		\end{align*}
		which is exactly (\ref{eq3.30}). 
	\end{proof}

	\begin{theorem}\label{thm3.8}
	Suppose $u=u_{i_1\cdots i_j}dx^{i_1}\otimes dx^{i_2} \cdots\otimes  dx^{i_j}$ satisfies
	\begin{align*}
		(\partial_t-\Delta_j)u=0.
	\end{align*}
		Then we have 
		\begin{align}
			\sup_{(x,t) \in \Omega'} \left\{ |\nabla u|_g +[\nabla u]_{g,\alpha}
			\right\} \leq
			C(n)  \|u\|_{L^{\infty}(\Omega)}. 
			\label{eq3.33}    
		\end{align} 
	\end{theorem}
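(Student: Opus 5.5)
The plan is to chain Proposition~\ref{prop3.7}, Lemma~\ref{lm3.2} and Lemma~\ref{lm3.5}, working throughout in the harmonic coordinates of the local model space-time (Definition~\ref{def3.1}). The steps are as follows.

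\textbf{Step 1 (parabolic Sobolev bound).} Proposition~\ref{prop3.7} gives
\begin{equation*}
\|u\|_{W_p^{2,1}(\Omega')}\leq C(n)\|u\|_{L^\infty(\Omega)} .
\end{equation*}
Here $p>n+2$ is the exponent of the harmonic radius, and the constant depends only on $n$, because $\Theta$ is controlled by (\ref{eq3.3}) and Lemma~\ref{lm3.4}.

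\textbf{Step 2 (embedding into H\"older).} Lemma~\ref{lm3.2} with $m=1$ and $\alpha=1-(n+2)/p$ turns this into
\begin{equation*}
\|u\|_{C^{1+\alpha,\frac{1+\alpha}{2}}(\Omega')}\leq C\|u\|_{W_p^{2,1}(\Omega')}\leq C\|u\|_{L^\infty(\Omega)} .
\end{equation*}
Consequently, for each fixed $t\in[3,4]$, the coefficient functions $u_{i_1\cdots i_j}(\cdot,t)$ have $C^{1+\alpha}$ spatial norm bounded by $C\|u\|_{L^\infty(\Omega)}$.

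\textbf{Step 3 (convert to covariant quantities).} Fix $t$ and apply Lemma~\ref{lm3.5} to the tensor $u(\cdot,t)$. This converts the Euclidean $C^{1+\alpha}$ bound into a bound on $|\nabla u|_g$ and on the parallel-transport H\"older seminorm $[\nabla u]_{g,\alpha}$. Taking the supremum over $t\in[3,4]$ then yields (\ref{eq3.33}).

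\textbf{Main obstacle: domain mismatch.} Lemma~\ref{lm3.5} requires $C^{1+\alpha}$ control on all of $B=B_2(0)$ in order to estimate at points of $B'=B_1(0)$. In particular, the geodesics from $x\in B'$ to $y\in B_{1/2}(x)$ must stay in the controlled region. Step 2, however, only provides control on $\Omega'=B'\times[3,4]$. I would handle this by a standard rescaling/covering argument:
\begin{itemize}
\item First apply Proposition~\ref{prop3.7} on slightly larger cylinders, say $B_{3/2}(0)\times[2,4]$, which are interior to $\Omega$. The interior $W^{2,1}_p$ estimate of Lemma~\ref{lm3.3}, applied with a cutoff function, gives this with a constant that depends only on the distance to the parabolic boundary.
\item Then run Lemma~\ref{lm3.5} with $B$ replaced by $B_{3/2}(0)$. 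Its proof goes through verbatim once the small-curvature condition (\ref{eq3.8}) ensures that geodesics between points of $B'$ at distance $<1/2$ remain inside $B_{3/2}(0)$. This holds because $g$ is $2$-bi-Lipschitz to the Euclidean metric by (\ref{eq3.3}), so the metric distance $<1/2$ gives a Euclidean bound on the geodesic's excursion.
\end{itemize}

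A second point is that Lemma~\ref{lm3.5} is stated with a possibly time-dependent metric. Since the heat equation here is for a fixed metric, the metric at time $t$ is the same $g$, and Lemma~\ref{lm3.4} supplies the needed uniform $C^{1+\alpha}$ bound on $g$, and hence a $C^\alpha$ bound on $\Gamma$. With this, all constants in Lemma~\ref{lm3.5} depend only on $n$.
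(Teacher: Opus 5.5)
Your proposal is correct and follows the paper's proof: Proposition~\ref{prop3.7}, then the parabolic Sobolev embedding of Lemma~\ref{lm3.2}, then the conversion to covariant quantities as in Lemma~\ref{lm3.5} on each time slice. Your intermediate interior cylinder makes explicit the domain mismatch that the paper passes over with ``a similar estimate as in the proof of Lemma~\ref{lm3.5}''; just take the intermediate ball large enough to contain every $y\in B_{\frac12}(x)$ with $x\in B'$ after the $\sqrt{2}$ distortion in (\ref{eq3.3}) (e.g.\ $B_{7/4}(0)$ instead of $B_{3/2}(0)$), and note that along the geodesic itself only the bound on $\Gamma$ from Lemma~\ref{lm3.4} is used, not any control of $u$.
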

	
	\begin{proof}
	By Proposition \ref{prop3.7},
		\begin{equation*}
			\begin{split}
					 \|u\|_{W_p^{2,1}(\Omega)} \leq C \|u\|_{L^{\infty}(\Omega)}. 
			\end{split}
		\end{equation*}
		Then the parabolic Sobolev embedding implies that
		\begin{align*}
			\sup_{s \in [-1,0]} \|\partial u(\cdot, s)\|_{C^{\alpha}(B')} 
			\leq C(n)  \|u\|_{L^{\infty}(\Omega)}.   
		\end{align*}
		Then by a similar estimate as in the proof of Lemma~\ref{lm3.5}, we obtain
		\begin{align}
			\sup_{s \in [-1,0]}\bigg\{ \|\nabla u(\cdot, s)\|_{L^\infty(B')} +\|\nabla u(\cdot, s)\|_{C_g^{\alpha}(B')} \bigg\}\leq C(n)  \|u\|_{L^{\infty}(\Omega)}.    
			\label{eq3.37}        
		\end{align}
		Therefore, (\ref{eq3.33}) follows from  (\ref{eq3.37}).    
	\end{proof}

	\section{Heat kernel estimate}
	\label{sec4}

	Note that the curvature condition itself cannot exclude the happening of collapsing. 
	In other words, the cut radius could be very small.   This phenomenon causes the major difficulty of this section. However, it can be overcome by standard technique: we shall lift the metrics $g$ locally to some tangent space $T_x M$.

	\begin{lemma}
		\label{lm4.2}
		Suppose $M$ is an evolving manifold satisfying $\|Rm\| \leq \Lambda_0$. 
		Let $H_j(x,y,t)$ be the heat kernel of $\Delta_j$. Then there is a constant $C_1=C_1(n,\Lambda_0)>0$ and $C_2=C_2(n)>0$
		\begin{equation}\label{eq4.2}
			\begin{split}
				H_j(x,y,t)\leq C_1e^{C_2\Lambda_0t}V_{x}(\sqrt{t})^{-1}e^{-\frac{d^2(x,y)}{C_2t}}.
			\end{split}
		\end{equation}
	\end{lemma}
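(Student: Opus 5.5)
The plan is to reduce the estimate for forms to the scalar heat kernel by domination, and then use the Gaussian upper bound for the scalar kernel on manifolds with Ricci curvature bounded below.

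\textbf{Step 1: pointwise domination.} By the Bochner--Lichnerowicz formula \eqref{eq2-10}, $\Delta_j=\nabla^*\nabla+V_j$ with $|V_j|\le C(n)|\mathrm{Rm}|\le C(n)\Lambda_0$. So $V_j\ge -C_2\Lambda_0$ as a fiberwise self-adjoint endomorphism. Let $\omega(t)=e^{-t\Delta_j}\omega_0$. Kato's inequality gives
\[
(\partial_t-\Delta)|\omega|\le C_2\Lambda_0|\omega|
\]
in the distributional sense, where $\Delta$ is the scalar (nonpositive) Laplacian. Applying the maximum principle to $e^{-C_2\Lambda_0 t}|\omega|$, or equivalently the Hess--Schrader--Uhlenbrock semigroup domination, yields
\[
|e^{-t\Delta_j}\omega_0|\le e^{C_2\Lambda_0 t}e^{t\Delta}|\omega_0|.
\]
On the level of kernels this means $|H_j(x,y,t)|\le e^{C_2\Lambda_0t}H_0(x,y,t)$. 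The maximum principle has to be justified on a possibly noncompact manifold. I would do this by the standard approximation with $L^2$ data and stochastic completeness, which holds because $\mathrm{Ric}\ge-(n-1)\Lambda_0$. Alternatively, the Feynman--Kac/Itô formula with parallel transport gives the domination directly, in the style of \cite{DT01}.

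\textbf{Step 2: scalar Gaussian bound.} Since $\mathrm{Ric}\ge -(n-1)\Lambda_0$, the Li--Yau parabolic Harnack inequality with curvature correction gives
\[
H_0(x,y,t)\le C(n)\,V_x(\sqrt t)^{-1/2}V_y(\sqrt t)^{-1/2}\exp\Big(C(n)\Lambda_0 t-\frac{d^2(x,y)}{5t}\Big).
\]

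\textbf{Step 3: symmetrize the volume factor.} I remove $V_y$ using Theorem~\ref{thm2.1}. Since $B_{\sqrt t}(x)\subset B_{\sqrt t+d}(y)$ with $d=d(x,y)$,
\[
\frac{V_x(\sqrt t)}{V_y(\sqrt t)}\le C\,e^{\sqrt{\Lambda_0}(\sqrt t+d)}\Big(1+\frac{d}{\sqrt t}\Big)^n .
\]
The polynomial factor is absorbed by enlarging the Gaussian constant to $C_2$. For the exponential factor $e^{\sqrt{\Lambda_0}d}$, Young's inequality gives $\sqrt{\Lambda_0}d\le \varepsilon d^2/t+C\Lambda_0 t/\varepsilon$. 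The term $\varepsilon d^2/t$ goes into the Gaussian and the term $C\Lambda_0 t/\varepsilon$ into $e^{C_2\Lambda_0 t}$. Similarly $e^{\sqrt{\Lambda_0 t}}\le e\cdot e^{\Lambda_0 t}$. Combining Steps 1--3 yields \eqref{eq4.2}: the constant $C_1$ depends on $n,\Lambda_0$ through the volume comparison constant, and $C_2$ depends only on $n$.

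\textbf{Main obstacle.} I expect the main difficulty to be justifying the domination rigorously without any injectivity radius or compactness assumption. The Kato inequality is only distributional, and the maximum principle needs growth control at infinity. I would handle this by working with $L^2$ solutions, where the semigroup is contractive up to the factor $e^{C\Lambda_0 t}$, and then using a cutoff argument on exhausting domains with Dirichlet conditions followed by monotone convergence. If this becomes delicate, the probabilistic route through the Bismut/Driver--Thalmaier representation \cite{DT01} gives the domination directly.
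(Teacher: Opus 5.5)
Your proposal is correct and follows the paper's route: Kato's inequality applied to $\Delta_j=\nabla^*\nabla+V_j$ makes $|H_j|$ a subsolution of $\partial_t-\Delta-C(n)\Lambda_0$, and the Gaussian bound then follows from the scalar Li--Yau estimate combined with volume comparison. You also spell out steps the paper leaves implicit, namely the domination $|H_j|\le e^{C\Lambda_0 t}H_0$ on a noncompact manifold and the conversion of $V_x^{-1/2}V_y^{-1/2}$ into $V_x^{-1}$ with $e^{c\sqrt{\Lambda_0}\,d}$ absorbed by Young's inequality, and both are sound.
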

	
	\begin{proof}
 Noting that  
    \begin{equation*}
       \Delta_j=\nabla^*\nabla +V_j
    \end{equation*}
    where $|V_j|\leq C(n)\|Rm\|$.  By Kato's inequality,
    \begin{equation*}
        (\partial_t-\Delta) |H_j| \leq C(n)\|Rm\||H_j|.
    \end{equation*}
	Then~\eqref{eq4.2} follows directly from the volume comparison and Li-Yau estimate. 
	\end{proof}
	
	Note that (\ref{eq4.2}) can be rewritten as 
	\begin{align*}
		t^{\frac{n}{2}} H_j(x,y,t) \leq C  \left\{\frac{V_{x}(\sqrt{t})}{(\sqrt{t})^n}\right\}^{-1} e^{-\frac{d^2(x,y)}{C_2t}}, 
	\end{align*}
	which has the advantage that both sides are scaling invariant.  Without loss of generality, for any $0<t<1$, we can rescale the manifold by $\lambda=4\Lambda_0 \xi^{-2}t^{-1}$.  Thus, we obtain a manifold $(M,\tilde{g})$ that satisfies
	\begin{align*}
		\sup_{M} |Rm|_{\tilde{g}} \leq  \xi^2.  
	\end{align*}
	Using the rescaling property and volume comparison, the heat kernel $\tilde{H}_j$ of $(M,\tilde{g})$ satisfies the estimate
	\begin{align*}
		\tilde{H}_j(x,y,4) \leq C V_{x}(1)^{-1} e^{-\frac{d^2(x,y)}{4C_2}}. 
	\end{align*}
	We claim that similar estimates hold in a neighborhood of $(x, 4)$: 
	\begin{align}
		\sup_{w \in B_{10}^{\tilde{g}}(x), 3 \leq t \leq 4} 
		\tilde{H}_j(w,y,t) \leq C V_{x}(1)^{-1} e^{-\frac{d^2(x,y)}{8C_2}}  =:\Theta.
		\label{eq4.3}
	\end{align}
	In fact, the triangle inequality implies that
	\begin{align*}
		d^2(w,y)\geq \frac{1}{2}d^2(x,y)-d^2(w,x)\geq \frac12 d^2(x,y)-100.
	\end{align*}
	Adjusting $C$, we have
	\begin{align*}
		e^{-\frac{d^2(w,y)}{C_2s}}\leq e^{-\frac{\frac12 d^2(x,y)-100}{C_2s}}\leq Ce^{-\frac{d^2(x,y)}{8C_2}}, 
		\quad \forall \;   w \in B_{10}^{\tilde{g}}(x), 3 \leq s \leq 4. 
	\end{align*}
	Thus, we finished the proof of (\ref{eq4.3}). 
	
	Fix $x$ and define $\varphi$ as the exponential map from $T_x M$ to $M$, with respect to the metric $\tilde{g}$:
	\begin{align*}
		\varphi(v) := Exp_{x}^{\tilde{g}}(v), \quad \; \forall\; v \in T_x M \simeq \mathbb{R}^n. 
	\end{align*}
	Using this smooth map $\varphi$, we can pull back the evolving metrics $\tilde{g}$ to $T_x M$ as
	\begin{align*}
		\hat{g} :=\varphi^* \tilde{g}, 
	\end{align*}
	Since $\xi$ is chosen sufficiently small,  $\log |d \varphi|$ is uniformly bounded. 
	Consider $(B_{r}(0), \hat{g})$ with $r=\frac{1}{100n\pi \xi}>>100$. 
	We can further assume the $W_p^{2,1}$-harmonic radius at $0$ is at least $100$.  
	Thus,  we have the harmonic diffeomorphism $\psi: B_{10}(0) \to \mathbb{R}^n$.  
	Then 
	\begin{align*}
		\psi: B_{10}^{\varphi^* \tilde{g}}(x) \mapsto \mathbb{R}^n. 
	\end{align*}
	According to the definition of harmonic radius, we have 
	\begin{align}\label{eq4.4}
		B_{5\sqrt{2}}(0) \subset  \psi (B_{10}^{\varphi^* \tilde{g}}(x)) \subset B_{10\sqrt{2}}(0). 
	\end{align}
  Define $\Omega=B_1(0)\times [3,4]$, we have
	\begin{align*}
		(\Omega, \bar{g}) \; \textit{is a model space-time in the sense of Definition~\ref{def3.1}}.   
	\end{align*}

	Fix $y$ and define
	\begin{align*}
		\bar{H}_j(z,t) :=\psi_* \varphi^* \tilde{H}_j(z,y,t)=\tilde{H}_j(\varphi \psi^{-1}(z),y,t).
	\end{align*}
	Then $\bar{H}_j$ is a heat solution on $(\Omega, \bar{g})$.  It follows from (\ref{eq4.2}) and (\ref{eq4.4}) that 
	\begin{align}
		\sup_{\Omega} \bar{H}_j \leq \Theta. 
		\label{eqn:HA10_5}    
	\end{align}
	Thus, we can apply Proposition~\ref{prop3.7} and Lemma \ref{lm3.2} to obtain
	\begin{align}
		\|\bar{H}\|_{C^{1+\alpha,\frac{1+\alpha}{2}}(\Omega')} \leq C(n)\Theta, 
		\label{eqn:HA10_6}    
	\end{align}
where $\Omega'=B_{\frac12}(0)\times [4-\frac14,4]$,	which then implies that (cf. Lemma \ref{lm3.5})
	\begin{align}
		\sup_{(x,t) \in \Omega'} \left\{ |\nabla \bar{H}_j|_{\bar{g}} 
		+[ \nabla \bar{H}_j]_{\bar{g},\alpha}
		\right\} \leq
		C(n) \Theta. 
		\label{eq4.8}    
	\end{align}
	In particular, we have 
	\begin{align}
		\left. \left\{ |\nabla \bar{H}_j|_{\bar{g}} + 
		+[\nabla \bar{H}_j]_{\bar{g},\alpha}
		\right\} \right|_{(0,4)}\leq
		C(n) \Theta. 
		\label{eq4.9}    
	\end{align}
    
 For $t > 1$, we choose $\lambda = 4\Lambda_0 \xi^{-2}$  and repeat the argument above.
 
	Recalling the definition of $\Theta$ in (\ref{eq4.3}) and rescaling back to the original $g$, we obtain the following Theorem.

	\begin{theorem}\label{thm4-3}
		Let $M$ be a complete Riemannian manifold such that $\|Rm\|<\Lambda_0$. Let $H_j$ be the heat kernel of $\Delta_j$ of $M$. 
		Then there are $\alpha \in (0,1)$ and $C_1=C_1(n,\Lambda_0,\alpha)>0,C_2=C_2(n)>0$ such that 
		\begin{itemize}
			\item [(i)] $C^1$-estimate
			\begin{equation}\label{eqA11}
				\begin{split}
					|H_j(x,y,t)|+t^{\frac12}|\nabla_x H_j(x,y,t)|\leq C_1V^{-1}_x(\sqrt{t})e^{C_2\Lambda_0t}e^{-\frac{d^2(x,y)}{C_2t}},
				\end{split}
			\end{equation}
            for any $x,y\in M$ and $0<t<\infty$.
			\item [(ii)]
			There exists a constant $\rho=\rho(n,\Lambda_0)>0$ such that
			\begin{equation}\label{eqA12}
				\begin{split}
					t^{\frac{1+\alpha}{2}}[\nabla_x H_j(x,y,t)]^{\rho \sqrt{t}}_\alpha\leq C_1V^{-1}_x(\sqrt{t})e^{C_2\Lambda_0 t}e^{-\frac{d^2(x,y)}{C_2t}},
				\end{split}
			\end{equation}
		\end{itemize}
		 for any $x,y\in M $.
	\end{theorem}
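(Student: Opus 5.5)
The proof follows the rescaling scheme already set up after Lemma \ref{lm4.2}. We assemble the local estimate \eqref{eq4.9} at the scale $\sqrt{t}$ and then undo the rescaling. The basic input is the Gaussian upper bound \eqref{eq4.2} of Lemma \ref{lm4.2}. It gives the zeroth-order part of (i) directly, so only the derivative and H\"older terms need work.

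Fix $x,y$ and $t$, and treat $0<t\le 1$ first. Rescale $\tilde g=\lambda g$ with $\lambda=4\Lambda_0\xi^{-2}t^{-1}$. Then $|Rm|_{\tilde g}\le \xi^2$, and the original time $t$ becomes rescaled time $\lambda t = 4\Lambda_0\xi^{-2}$. After a harmless time translation and normalization we may assume this time is $4$, as in the text.

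On $M$ the injectivity radius may be tiny, so we work on the pullback instead. Pull back by $\varphi=Exp_x^{\tilde g}$ to the ball $B_r(0)\subset T_xM$. There the conjugate radius is at least $\pi/\xi$, so $\hat g$ has injectivity radius bounded below at $0$. Proposition \ref{prop1.8} with $k=2$ then gives a $W^{2,1}_p$ harmonic radius of at least $100$, which yields the model space-time $(\Omega,\bar g)$. The lifted kernel $\bar H_j(\cdot,t)=\tilde H_j(\varphi\psi^{-1}(\cdot),y,t)$ is a heat solution of the lifted Hodge Laplacian, because $\varphi$ is a local isometry.

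Lemma \ref{lm4.2}, via \eqref{eq4.3}, bounds $\sup_\Omega|\bar H_j|$ by $\Theta$. Proposition \ref{prop3.7} together with Lemma \ref{lm3.2} and Theorem \ref{thm3.8} then gives \eqref{eq4.9}: at the point $(0,4)$, both $|\nabla\bar H_j|$ and the H\"older seminorm $[\nabla\bar H_j]_\alpha$ over radius $1/2$ are at most $C\Theta$.

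Next we push this back down to $M$. Since $\varphi$ is a local isometry, $|\nabla\tilde H_j|(x)=|\nabla\bar H_j|(0)$. For the H\"older term, take $x'$ with $d_{\tilde g}(x,x')<\rho_0$ for a small $\rho_0$. Every minimal geodesic $\gamma$ from $x$ to $x'$ lifts under $\varphi$ to a segment starting at $0$, and parallel transport commutes with $\varphi$. Hence the quotient in Definition \ref{def4.1} equals the corresponding quotient for $\bar H_j$. We also need $\psi$ to distort distances by a bounded factor. With this, the supremum over $d(x,x')<\rho_0$ in $\tilde g$ is bounded by $C\Theta$. This is the origin of the restriction to radius $\rho\sqrt t$ in (ii).

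Now rescale back. Since $\Theta\sim V^{\tilde g}_x(1)^{-1}e^{-\tilde d^2/(8C_2)}$, we have $V^{\tilde g}_x(1)=\lambda^{n/2}V_x(\lambda^{-1/2})$ and $\lambda^{-1/2}\sim\sqrt t$. Volume comparison (Theorem \ref{thm2.1}) absorbs the constants. The kernel scales as $\lambda^{-n/2}$, each derivative contributes $\lambda^{1/2}\sim t^{-1/2}$, and the H\"older seminorm contributes $\lambda^{(1+\alpha)/2}\sim t^{-(1+\alpha)/2}$. We also have $\tilde d^2/8C_2=\lambda d^2/8C_2\gtrsim d^2/(C t)$. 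This gives (i) and (ii) for $t\le1$; here $e^{C_2\Lambda_0 t}$ is bounded, but it is retained anyway.

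For $t>1$ we instead take the fixed $\lambda=4\Lambda_0\xi^{-2}$ and apply the same local argument on the time window $[t-1,t]$ in the rescaled units. The sup-bound input still comes from \eqref{eq4.2}, which now carries the factor $e^{C_2\Lambda_0 t}$. Since $\sqrt t>1$, volume comparison gives $V_x(1)^{-1}\le Ce^{C\sqrt{\Lambda_0}\sqrt t}t^{n/2}V_x(\sqrt t)^{-1}$. This loss is absorbed into $e^{C_2\Lambda_0 t}$ after enlarging $C_2$ (note $\sqrt t\le t$). The factors $t^{1/2}$ and $t^{(1+\alpha)/2}$ are absorbed the same way. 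Replacing $\Lambda_0$ by $\max(\Lambda_0,1)$ makes $C_2=C_2(n)$ work.

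The main obstacle is justifying that the model space-time hypotheses hold uniformly after lifting. Concretely, we need that $\hat g$ has a $W^{2,1}_p$ harmonic radius of at least $100$ at $0$, using only $|Rm|\le\xi^2$ and no derivative bounds on $Rm$. The plan here is to apply Anderson's estimate (Proposition \ref{prop1.8}) with $k=2$, which requires only the $c_0$ bound, together with the injectivity radius of the lifted ball. A secondary point is that parallel transport along minimal geodesics on $M$ agrees with transport along the lifted radial segments. This holds for $d(x,x')$ below the conjugate radius, and that fixes $\rho$.
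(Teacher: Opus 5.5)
Your proposal is correct and is essentially the paper's own argument. Both proceed through the same steps:
\begin{itemize}
\item the Gaussian bound of Lemma \ref{lm4.2};
\item rescaling to $|\mathrm{Rm}|\le \xi^2$;
\item lifting by $\mathrm{Exp}_x$ to a model space-time in harmonic coordinates;
\item the local estimates of Proposition \ref{prop3.7}, Lemma \ref{lm3.2} and Lemma \ref{lm3.5} at the base point;
\item rescaling back, with a fixed $\lambda$ for $t>1$.
\end{itemize}
Your handling of the time normalization and of the H\"older quotient via lifted radial geodesics is more explicit than the paper's.

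The one step that you and the paper both assert rather than prove is the uniform $W^{2,p}$ harmonic radius of the lifted metric. This needs Anderson's estimate in the form ``curvature bound plus an injectivity radius bound at all points near $0$ in the lifted ball''; a bound at $0$ alone is not enough. Note also that Proposition \ref{prop1.8}, as literally indexed, would for $k=2$ also require a bound on $\nabla\mathrm{Rm}$. So your remark that it ``requires only the $c_0$ bound'' relies on the sharper standard version of Anderson's theorem, not on the statement as printed.
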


	\section{Boundedness of Riesz type operator}
	In this section, we get the $L^p$ boundedness of Riesz type operator for $p>2$. 

    We fix a point $q \in M$ and recall that $B=B(q,1)$ and $2^iB=B(q,2^i)$ for $i\geq 1$.
    
	\begin{definition}\label{def5.1}
		Let $(M,g)$ be an n-dimensional Riemannian manifold.	
        A section $\mathcal{K}:M\times M\times \IR^+\to   \otimes^k T^*M\times \otimes^lT^*M $ is said to be a Riesz type 
		kernel if it satisfies the following conditions for some constants $C_1,C_2>0$.
		\begin{itemize}
			\item [(a)] $|\mathcal{K}(x,y,t)|\leq C_2V^{-1}_x(\sqrt{t})t^{-\frac{1}{2}}e^{-C_0C_1t}e^{-\frac{d^2(x,y)}{C_1t}}$.
			\item [(b)] There exists a constant $\rho>0$ and $\alpha\in (0,1)$ such that
			\begin{equation}\label{eq5.1}
				\begin{split}
					[\nabla_x \mathcal{K}(x,y,t)]^{\rho \sqrt{t}}_\alpha\leq C_2t^{-\frac{1+\alpha}{2}}V^{-1}_x(\sqrt{t})e^{-C_0C_1t}e^{-\frac{d^2(x,y)}{C_1t}},
				\end{split}
			\end{equation}
		 for any $x,y\in M $.
		\end{itemize}
		Here $C_0=64n(\sqrt{\Lambda_0}+1)^2$.
		$T$ is said to be a singular integral operator associated with the Riesz type kernel $K$ if
		\begin{equation*}
			\begin{split}
				Tf(x)=\int_0^\infty t^{-\frac12}\int_M \mathcal{K}(x,y,t)f(y)dydt
			\end{split}
		\end{equation*}
		whenever $f\in\Gamma^\infty_c(M,\otimes^lT^*M)$.
		
		A singular integral $T$ associated to a Riesz type kernel $K$ is called a Riesz type operator when it is bounded on $L^2$, that is, there is a $C_2>0$ such that
		\begin{equation}\label{eq5.3}
			\begin{split}
				\|Tf\|_{L^2}\leq C_3\|f\|_{L^2}
			\end{split}
		\end{equation}
		for all smooth compactly supported $f$.
	\end{definition}
		For a Riesz type operator $T$, we have
	\begin{theorem}\label{thm5.2}
		For $2< p<\infty$, there exists a constant  $C=C(n,p,\rho,\alpha,\Lambda_0,C_1,C_2,C_3)>0$ such that 
		\begin{equation*}
			\begin{split}
				&\int_{M}|Tf|^p \leq C\int_{B}|f|^p
			\end{split}
		\end{equation*}
		for any $f\in L^p(\otimes^lT^*M)$ with $\mathrm{supp}f\subset B$.
	\end{theorem}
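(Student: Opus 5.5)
Fix $f$ with $\mathrm{supp}f\subset B$ and split $M=4B\cup(M\setminus 4B)$. The far region is handled by the Gaussian and exponential decay in (a); the near region by a good-$\lambda$ (sharp maximal function) argument in the spirit of Fefferman--Stein, localized to scales $r<4$. Throughout, I use only the $L^2$ bound \eqref{eq5.3}, the Hölder regularity \eqref{eq5.1}, local doubling from Theorem~\ref{thm2.1} and Corollary~\ref{coro2.2}, and the local maximal function $\mathbf M$ of Theorem~\ref{thm2.3} together with Remark~\ref{rmk2.3}.

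\emph{Far region.} For $x\notin 4B$ and $y\in B$ we have $d(x,y)\ge \frac12 d(x,q)\ge 2$. By (a),
\[
|Tf(x)|\le \int_0^\infty t^{-1}V_x(\sqrt t)^{-1}e^{-C_0C_1t}e^{-d^2(x,q)/(4C_1t)}\,dt\;\|f\|_{L^1(B)}.
\]
I will compare $V_x(\sqrt t)^{-1}$ with $|B|^{-1}$ using Theorem~\ref{thm2.1}. This gives a factor $e^{\sqrt{\Lambda_0}(d(x,q)+\sqrt t)}(1+d(x,q)/\sqrt t)^n$. The choice $C_0=64n(\sqrt{\Lambda_0}+1)^2$ is designed so that $e^{-C_0C_1t-d^2/(4C_1t)}$ absorbs this factor. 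The result is a bound $|Tf(x)|\lesssim e^{-c\,d(x,q)}|B|^{-1}\|f\|_{L^1}$.

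Raising to the power $p$ and integrating over $M\setminus 4B$, I use $|2^{i+1}B\setminus 2^iB|\le Ce^{C2^i}|B|$ from volume comparison, with the exponential decay winning once the constants are fixed appropriately. Hölder's inequality $\|f\|_{L^1}\le|B|^{1-1/p}\|f\|_{L^p}$ then gives $\int_{M\setminus 4B}|Tf|^p\le C\int_B|f|^p$. The small times $t$ are harmless here because $d\ge2$.

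\emph{Near region.} I define a local sharp function
\[
\mathbf M^\#F(x)=\sup_{x\in B_r(z),\,r<4}\fint_{B_r(z)}\big|F-\tau F_{B_r}\big|,
\]
where the averages of the tensor $F=Tf$ are formed after parallel transport along shortest geodesics, which makes sense at scales below $\rho\sqrt t$. The key pointwise estimate to prove is
\[
\mathbf M^\#(Tf)(x)\le C\big(\mathbf M(|f|^2)(x)\big)^{1/2}.
\]
To prove it, fix a ball $B_r(z)$ and split $f=f\chi_{B_{5r}(z)}+f\chi_{M\setminus B_{5r}(z)}=f_1+f_2$.
\begin{itemize}
\item For $f_1$, use the $L^2$ bound \eqref{eq5.3} and the comparison of $|B_{5r}|$ with $|B_r|$ from Corollary~\ref{coro2.2}.
\item For $f_2$, split the time integral into $\sqrt t<r/\rho$ and $\sqrt t\ge r/\rho$.
  \begin{itemize}
  \item When $\sqrt t<r/\rho$, the kernel is tiny by the Gaussian factor, since $d(x,y)\ge 4r$.
  \item When $\sqrt t\ge r/\rho$, use \eqref{eq5.1}: the difference $\mathcal K(x,y,t)-\tau\mathcal K(x',y,t)$ is bounded by $(r/\sqrt t)^\alpha$ times the Gaussian bound. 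Strictly, \eqref{eq5.1} controls $\nabla_x\mathcal K$, which together with (a) controls the $C^0$ oscillation at scale $r$ by the mean value along geodesics; if only $\nabla\mathcal K$ regularity is available, I would interpret $T$'s kernel as already the differentiated one.
  \end{itemize}
  Integrating over dyadic annuli $2^kr\le d<2^{k+1}r$ produces $\sum_k 2^{-k\alpha}\mathbf M f$, plus large-scale pieces that Remark~\ref{rmk2.3} handles.
\end{itemize}

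Then a localized Fefferman--Stein inequality $\|F\|_{L^p(4B)}\le C(\|\mathbf M^\#F\|_{L^p}+\text{average term})$, proved via a Calderón--Zygmund/Whitney decomposition in $4B$ using local doubling, combined with Theorem~\ref{thm2.3} for $\mathbf M(|f|^2)$ at exponent $p/2>1$, yields $\int_{4B}|Tf|^p\le C\int_B|f|^p$. The average term is bounded by $\|Tf\|_{L^2}$ and Hölder.

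The main obstacle is the sharp-function step. Tensor-valued oscillation requires parallel transport, and the regularity \eqref{eq5.1} is only valid at scale $\rho\sqrt t$, so the splitting threshold must be tied to $\rho$. The good-$\lambda$ machinery must also be run with only local doubling, for balls of radius less than $4$, which is why Corollary~\ref{coro2.2} (comparing $B_{50r}$ to $B_{5r}\cap B_2$) is needed. I would first try a good-$\lambda$ inequality $|\{\mathbf M F>2\lambda,\ \mathbf M^\#F\le\gamma\lambda\}\cap 4B|\le C\gamma|\{\mathbf M F>\lambda\}\cap 4B|$, with Whitney balls of radius at most $1$.
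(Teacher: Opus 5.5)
Your outline is correct, but on the near region it takes a different route from the paper.

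\textbf{Far region.} This is essentially the paper's Proposition~\ref{lm5.9}. The one difference is normalization: the paper uses $V_x(2^{i+2})\ge|2^{i+1}B|$ instead of $|B|$. It therefore ends with $(|B|/|2^{i+1}B|)^{p-1}\le 1$ and never has to beat the volume growth of the annuli, which is slightly more economical in $C_0$.

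\textbf{Near region, the paper's route.} The paper uses no sharp function on $2B$. It runs a Caffarelli--Peral/Wang-type density argument on the scalar quantities $\mathbf M(|Tf|^2)$ and $\mathbf M(|f|^2)$:
\begin{enumerate}
\item[(1)] Lemma~\ref{lm5.4} shows $\bigl||Tf|(y)-|Tf|(y_0)\bigr|\le C\mathbf M(|f|)(y_0)$ when $f$ vanishes on $B_{5r}(y_0)$. This is exactly your $f_2$ estimate, with the same time split at $\sqrt t\sim r/\rho$.
\item[(2)] The local piece $f\chi_{B_{25r}}$ is handled by the $L^2$ bound and the weak $(1,1)$ bound for $\mathbf M$ (Lemma~\ref{lm5.6}).
\item[(3)] A Vitali covering inside $2B$, together with Corollary~\ref{coro2.2}, gives the good-$\lambda$ inequality (Lemma~\ref{lm5.7}, Corollary~\ref{coro5.8}).
\item[(4)] The layer-cake formula with $\varepsilon_1N^p<\frac12$ finishes.
\end{enumerate}
What this buys is that parallel transport enters only along a single geodesic between two points, as an isometry. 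No tensor-valued averages are needed, and no choice of minimizing geodesics (these are non-unique here, since collapsing is allowed). Nor is a Whitney decomposition or a Fefferman--Stein inequality on a locally doubling space required.

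\textbf{Near region, your route.} It is more modular: a clean pointwise bound $\mathbf M^\#(Tf)\lesssim\mathbf M(|f|^2)^{1/2}$ followed by a black box. But you must actually prove the localized Fefferman--Stein inequality:
\begin{enumerate}
\item[(1)] view $(4B,d,\mu)$ as a space of homogeneous type, via the argument of Corollary~\ref{coro2.2};
\item[(2)] replace $2\lambda$ by $A\lambda$, with $A$ depending on the doubling constant;
\item[(3)] obtain a priori finiteness by truncating the layer-cake integral;
\item[(4)] bound the average term by $\|Tf\|_{L^2}$.
\end{enumerate}
Your stated ``main obstacle'' (tensor-valued oscillation) disappears if you apply the sharp function to the scalar $|Tf|$. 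Since
\[
\bigl||Tf|(x')-|Tf_2|(z)\bigr|\le|Tf_1|(x')+\bigl|Tf_2(x')-\tau_\gamma(x',z)\,Tf_2(z)\bigr|,
\]
the scalar Fefferman--Stein inequality suffices. This is precisely the trick behind Lemma~\ref{lm5.4}.

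\textbf{Three small corrections.}
\begin{enumerate}
\item[(i)] The small-time piece $\sqrt t<r/\rho$ is not ``tiny'': for small $\rho$ you only have $d^2/t\gtrsim\rho^2$. It is, however, bounded by $C(\rho)\mathbf M(|f|)$ via dyadic annuli, as in the paper's estimate of the term $II$.
\item[(ii)] Read condition (b) of Definition~\ref{def5.1} as H\"older continuity of $\mathcal K$ itself in $x$. That is how Lemma~\ref{lm5.4} uses it, and it is what Theorem~\ref{thm4-3}(ii) supplies for $\mathcal K=e^{-\kappa t}\nabla_xH_j$. A H\"older bound on $\nabla_x\mathcal K$ together with (a) would not by itself give the mean-value estimate you describe.
\item[(iii)] Remark~\ref{rmk2.3} is stated only for $x\in 2B$. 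On $4B$, enlarge the radius cutoff in $\mathbf M$, which is harmless under local doubling.
\end{enumerate}
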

	\begin{proof}
		The theorem follows from Proposition \ref{prop5.3} and Proposition \ref{lm5.9} below.
	\end{proof}

	\begin{proposition}
        \label{prop5.3}
		For $2< p<\infty$, there exists a constant  $C=C(n,p,\rho,\alpha,\Lambda_0,C_1,C_2,C_3)>0$ such that 
		\begin{equation}
        \label{eqn:HC21_5}
			\begin{split}
				&\int_{2B}|Tf|^p \leq C\int_{B}|f|^p.
			\end{split}
		\end{equation}
		for any $f\in L^p(M,\otimes^jT^*M)$ with $\mathrm{supp}f\subset B$.
	\end{proposition}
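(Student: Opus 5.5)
The plan is to prove the local $L^p$ bound on $2B$ for $p>2$ by a good-$\lambda$ / sharp maximal function argument adapted to the unit scale, in the spirit of Fefferman–Stein. The input is the $L^2$ boundedness \eqref{eq5.3} together with the Hölder regularity \eqref{eq5.1} of the kernel. Concretely, I would define a local sharp maximal function
\begin{equation*}
\mathbf{M}^\#(Tf)(x):=\sup_{x\in B_r(z),\,0<r<4}\fint_{B_r(z)}\Big|Tf-\fint_{B_r(z)}Tf\Big|\,d\mu .
\end{equation*}
The goal is the pointwise estimate
\begin{equation*}
\mathbf{M}^\#(Tf)(x)\leq C\big(\mathbf{M}(|f|^2)(x)\big)^{1/2}, \qquad x\in 2B.
\end{equation*}
Since $p>2$, Theorem \ref{thm2.3} applied with exponent $p/2>1$ then gives $\|\mathbf{M}(|f|^2)^{1/2}\|_{L^p}\le C\|f\|_{L^p}$.

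To pass from $\mathbf{M}^\#$ back to $Tf$ on $2B$, I would use a local Fefferman–Stein inequality. This comes from a Calderón–Zygmund/Whitney decomposition of $2B$ relative to balls of radius $<4$. Corollary \ref{coro2.2} is exactly the tool that makes doubling work for balls intersected with $B_2(q)$, since the dyadic-type balls may stick out of $2B$. The resulting inequality reads
\begin{equation*}
\int_{2B}|Tf|^p\le C\int_{2B}(\mathbf{M}^\# Tf)^p+C|2B|^{1-p}\Big(\int_{2B}|Tf|\Big)^p .
\end{equation*}
The last term is controlled by $\|Tf\|_{L^2(2B)}\le C_3\|f\|_{L^2}\le C\|f\|_{L^p(B)}|B|^{1/2-1/p}$, using Hölder's inequality and $\mathrm{supp} f\subset B$. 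The volume factors are harmless by Theorem \ref{thm2.1} at unit scale.

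The heart of the proof is the pointwise sharp estimate. Fix a ball $B_r(z)\ni x$ with $r<4$ and split $f=f_1+f_2$, where $f_1=f\chi_{B_{2r}(z)}$. For $f_1$, I would use $L^2$ boundedness and Cauchy–Schwarz:
\begin{equation*}
\fint_{B_r(z)}|Tf_1|\le\Big(\tfrac{C_3^2}{|B_r(z)|}\int_{B_{2r}(z)}|f|^2\Big)^{1/2}\le C\,\mathbf{M}(|f|^2)(x)^{1/2}.
\end{equation*}
For $f_2$, I would compare $Tf_2(w)$ with $Tf_2(z)$ for $w\in B_r(z)$, after parallel transport along the minimizing geodesic so that the tensor values can be compared. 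This gives
\begin{equation*}
\int_0^\infty t^{-1/2}\int_{d(y,z)\ge 2r}|\mathcal{K}(w,y,t)-\tau\mathcal{K}(z,y,t)|\,|f(y)|\,dy\,dt .
\end{equation*}
I would split the $t$-integral at $\sqrt t\sim r/\rho$. For $\rho\sqrt t\ge r$, the Hölder bound \eqref{eq5.1} applies and gives a factor $(r/\sqrt t)^{\alpha}$ times the Gaussian. For small $t$, I would instead use the size bound (a) at both points; since $d(y,\cdot)\ge r$, the Gaussian factor $e^{-r^2/(C_1t)}$ gives rapid decay. In both regimes, the $y$-integral is bounded by decomposing into annuli $2^kr\le d(y,z)<2^{k+1}r$. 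I would then use Theorem \ref{thm2.1} to compare $V_w(\sqrt t)$ with $|B_{2^{k+1}r}(z)|$. The factor $e^{-C_0C_1t}$ with $C_0=64n(\sqrt{\Lambda_0}+1)^2$ is designed to absorb the exponential volume growth $e^{\sqrt{\Lambda_0}2^kr}$ and $e^{\sqrt{\Lambda_0}\sqrt t}$. Annuli with $2^kr\ge 4$ are handled by Remark \ref{rmk2.3}, since $f$ is supported in $B$.

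Summing $\sum_k 2^{-k\alpha}$ and integrating in $t$ should yield $C\,\mathbf{M}f(x)\le C\,\mathbf{M}(|f|^2)(x)^{1/2}$. I expect the main obstacle to be this kernel computation. One must verify carefully that, after the $t$-integration, the combination $t^{-1}V_x(\sqrt t)^{-1}e^{-d^2/C_1t}e^{-C_0C_1t}$ is dominated by $\sum_k 2^{-k\alpha}|B_{2^kr}(z)|^{-1}$ uniformly in $r<4$, with no loss from the exponential volume growth. A secondary difficulty is the local Fefferman–Stein step near $\partial(2B)$, which is where Corollary \ref{coro2.2} is needed.
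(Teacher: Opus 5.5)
Your proposal is correct in outline, but it takes a genuinely different route from the paper. The paper never uses a sharp maximal function.

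Its Lemma~\ref{lm5.4} is the same H\"ormander-type kernel estimate as your bound for $Tf_2$. It compares $|Tf|(y)$ with $|Tf|(y_0)$ for $f$ supported off $B_{5r}(y_0)$, splits the $t$-integral at $\rho^{-2}d^2(y,y_0)$, and sums over annuli. The paper then runs a covering/density argument on $\mathbf{M}(|Tf|^2)$:
Lemma~\ref{lm5.5} propagates control of $\mathbf{M}(|Tf|^2)$ from a point to a neighbourhood.
Lemma~\ref{lm5.6} shows that if at least half of $B_{5r}(x_0)\cap 2B$ has $\mathbf{M}(|Tf|^2)\le 1$ and $\mathbf{M}(|f|^2)\le\delta^2$, then $\{\mathbf{M}(|Tf|^2)>N^2\}$ fills at most an $\varepsilon$-fraction of it.
Lemma~\ref{lm5.7} turns this into a measure-decay inequality via Vitali covering and Corollary~\ref{coro2.2}.
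Finally, $f$ is normalized so that $\|f\|_{L^2}^2$ is small relative to $|2B|$ (Corollary~\ref{coro5.8}), and a layer-cake integration with $\varepsilon_1N^p<\frac12$ absorbs the $Tf$ term.

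You replace Lemmas~\ref{lm5.5}--\ref{lm5.7} and the normalization by two ingredients: the pointwise bound $\mathbf{M}^\#\lesssim\mathbf{M}(|f|^2)^{1/2}$, and a local Fefferman--Stein inequality. The mean term of that inequality, bounded through $L^2$-boundedness and $(|B|/|2B|)^{p/2-1}\le1$, does the job of the paper's normalization. This is shorter and more modular, but it costs you a dyadic or Calder\'on--Zygmund structure on $2B$. That structure exists because Corollary~\ref{coro2.2} makes $2B$, with the relative balls $B_s(z)\cap 2B$, a space of homogeneous type. The same corollary shows your unrestricted sharp function dominates the relative one. You should also run the good-$\lambda$ integral with a truncation, since $Tf\in L^p(2B)$ is not known a priori. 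The paper's route, by contrast, needs only Vitali covering by geodesic balls.

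One detail needs repair. For a section of $\otimes^kT^*M$ the average $\fint_{B_r(z)}Tf$ is meaningless. Define the sharp function of the scalar $|Tf|$ instead, take the constant $c=|Tf_2|(z)$, and use that parallel transport is an isometry, giving $\bigl||Tf_2|(w)-|Tf_2|(z)\bigr|\le|Tf_2(w)-\tau Tf_2(z)|$. This is exactly how Lemma~\ref{lm5.4} is phrased.
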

	Before proving the proposition, we need the following lemmas.
	\begin{lemma}
    \label{lm5.4}
		Let $M$ be a Riemannian manifold with $Ric \geq -(n-1)\Lambda_0$. 
        Suppose $y_0\in 2B$. Suppose $f\in L^2(B)$ and $\mathrm{supp}f\subset B\setminus B_{5r}(y_0)$ for some $r \in (0, 4)$.
        Then there exists a positive constant $C=C(n,\rho,\alpha,\Lambda_0,C_0,C_1,C_2)$ such that
		\begin{equation}
        \label{eqn:HC17_4}
			\begin{split}
			||Tf|(y)-|Tf|(y_0)|\leq C\mathbf{M}(|f|)(y_0)
			\end{split}
		\end{equation}
          for all $y \in B_{4r}(y_0)\cap 2B$. 
	\end{lemma}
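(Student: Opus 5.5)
We estimate the difference of $|Tf|$ at the two points directly from the kernel representation. Since $\bigl||Tf|(y)-|Tf|(y_0)\bigr|\le |Tf(y)-\tau_\gamma(y,y_0)Tf(y_0)|$ for any shortest geodesic $\gamma$ from $y_0$ to $y$ (parallel transport is an isometry), it suffices to bound
\begin{equation*}
\int_0^\infty t^{-\frac12}\int_{B\setminus B_{5r}(y_0)} \bigl|\mathcal{K}(y,z,t)-\tau_\gamma(y,y_0)\mathcal{K}(y_0,z,t)\bigr|\,|f(z)|\,dz\,dt .
\end{equation*}
For $z$ in the support of $f$ we have $d(y_0,z)\ge 5r$ and $d(y,y_0)<4r$, hence $d(y,z)\ge r$ and $d(y,z)$ is comparable to $d(y_0,z)$. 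We split the $t$-integral at $t_0$, chosen so that $\rho\sqrt{t_0}= 4r$, i.e. $t_0=(4r/\rho)^2$.

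\textbf{Large times, $t\ge t_0$.} Here $d(y,y_0)<\rho\sqrt t$, so condition (b) of Definition \ref{def5.1} applies. The paper states (b) for $\nabla_x\mathcal{K}$, but in the intended use the Hölder-type bound on the kernel itself, up to the factor $t^{-1/2}$, is what is needed; I would use (b) in the form
\begin{equation*}
|\mathcal{K}(y,z,t)-\tau\mathcal{K}(y_0,z,t)|\le C\,d(y,y_0)^\alpha\, t^{-\frac{1+\alpha}{2}} V_{y_0}^{-1}(\sqrt t)\,e^{-C_0C_1 t}e^{-\frac{d^2(y_0,z)}{C_1t}} .
\end{equation*}
Theorem \ref{thm2.1} lets us replace $V_y$ by $V_{y_0}$ at the cost of $e^{C\sqrt{\Lambda_0}\sqrt t}$, and this growth is absorbed by $e^{-C_0C_1t}$ (this is the role of $C_0$). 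Decompose $B$ into dyadic annuli $A_k=\{2^k\sqrt t\le d(y_0,z)<2^{k+1}\sqrt t\}$ together with the ball $B_{\sqrt t}(y_0)$. On $A_k$, volume comparison gives $V_{y_0}(2^{k+1}\sqrt t)\le C2^{(k+1)n}e^{C2^k\sqrt t}V_{y_0}(\sqrt t)$, and the Gaussian factor $e^{-4^k/C_1}$ kills both. This yields
\begin{equation*}
\int |\cdot|\,|f| \le C\,r^\alpha t^{-\frac{1+\alpha}{2}}e^{-ct}\,\overline{M}(y_0),
\end{equation*}
where $\overline M(y_0)=\sup_{s>0}\fint_{B_s(y_0)}|f|$. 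By Remark \ref{rmk2.3} (using $\mathrm{supp}f\subset B$ and $y_0\in 2B$), radii $s\ge4$ are controlled by $\mathbf M(|f|)(y_0)$, so $\overline M(y_0)\le C\mathbf M(|f|)(y_0)$. Including the outer factor $t^{-1/2}$, the remaining integral is $\int_{t_0}^\infty r^\alpha t^{-1-\alpha/2}\,dt\le C r^\alpha t_0^{-\alpha/2}=C\rho^\alpha$, a constant.

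\textbf{Small times, $t< t_0$.} Here we use only the size bound (a) for each term separately, together with $d(y,z),d(y_0,z)\ge r$. The same annular decomposition, now relative to $r$, gives
\begin{equation*}
t^{-\frac12}\int|\mathcal K(y,z,t)||f|\le C t^{-1}e^{-\frac{r^2}{2C_1t}}\cdot \frac{V_{y_0}(r)}{V_{y_0}(\sqrt t)}\cdot \overline M(y_0).
\end{equation*}
By Theorem \ref{thm2.1} (or Corollary \ref{coro2.2} for the comparison between $y$ and $y_0$), the volume ratio is at most $C(r/\sqrt t)^n$ since $r<4$. The substitution $s=r^2/t$ then gives $\int_0^{t_0}t^{-1}(r^2/t)^{n/2}e^{-r^2/(2C_1t)}\,dt\le C$. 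Summing the two regimes proves \eqref{eqn:HC17_4}.

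\textbf{Main obstacle.} The delicate step is the bookkeeping of volume factors in the dyadic sums. One must convert $V_y^{-1}$ to $V_{y_0}^{-1}$ and compare $V_{y_0}(2^k\sqrt t)$ with $V_{y_0}(\sqrt t)$ via Theorem \ref{thm2.1} uniformly in $t$; the exponential volume growth has to be dominated by $e^{-C_0C_1t}$ together with the Gaussian factor. I would first verify that the choice $C_0=64n(\sqrt{\Lambda_0}+1)^2$ makes this absorption work, and then handle radii $\ge4$ through Remark \ref{rmk2.3}.
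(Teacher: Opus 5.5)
Your proposal is correct and follows the paper's proof in all essentials. Like the paper, you compare the kernels via parallel transport along a shortest geodesic. You split the time integral at the scale where condition (b) becomes applicable; you read (b) as a H\"older bound on $\mathcal{K}$ itself, which is also how the paper uses it. You then apply (b) for large times and the size bound (a) to both kernels for small times, and you control the spatial integrals by annular decompositions and $\mathbf{M}(|f|)(y_0)$ via Remark \ref{rmk2.3}.

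The differences are bookkeeping only. You cut at $t_0=(4r/\rho)^2$ rather than at $\rho^{-2}d^2(y,y_0)$. You use $\sqrt{t}$-scaled annuli for large times, so you never need the gap $d(y_0,z)\ge 5r$ there; the paper instead extends $I$ to all $t>0$ and splits each $5^i r$-annulus at $t=(5^ir)^2$. Finally, the exponential volume growth you flag as the main obstacle is harmless in any case, since $d(y_0,z)<3$ on $\mathrm{supp} f$.
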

	
	\begin{figure}[htbp]
		\centering
		\resizebox{0.3\textwidth}{!}{
			\tikzset{every picture/.style={line width=0.75pt}} 

\begin{tikzpicture}[x=0.75pt,y=0.75pt,yscale=-1,xscale=1]

\draw  [fill={rgb, 255:red, 74; green, 74; blue, 74 }  ,fill opacity=1 ] (96,260.5) .. controls (96,122.7) and (207.7,11) .. (345.5,11) .. controls (483.3,11) and (595,122.7) .. (595,260.5) .. controls (595,398.3) and (483.3,510) .. (345.5,510) .. controls (207.7,510) and (96,398.3) .. (96,260.5) -- cycle ;
\draw  [fill={rgb, 255:red, 155; green, 155; blue, 155 }  ,fill opacity=1 ] (164.06,260.5) .. controls (164.06,160.29) and (245.29,79.06) .. (345.5,79.06) .. controls (445.71,79.06) and (526.94,160.29) .. (526.94,260.5) .. controls (526.94,360.71) and (445.71,441.94) .. (345.5,441.94) .. controls (245.29,441.94) and (164.06,360.71) .. (164.06,260.5) -- cycle ;
\draw  [fill={rgb, 255:red, 255; green, 255; blue, 255 }  ,fill opacity=1 ] (234.38,260.5) .. controls (234.38,199.13) and (284.13,149.38) .. (345.5,149.38) .. controls (406.87,149.38) and (456.63,199.13) .. (456.63,260.5) .. controls (456.63,321.87) and (406.87,371.63) .. (345.5,371.63) .. controls (284.13,371.63) and (234.38,321.87) .. (234.38,260.5) -- cycle ;
\draw  [color={rgb, 255:red, 0; green, 0; blue, 0 }  ,draw opacity=1 ][fill={rgb, 255:red, 0; green, 0; blue, 0 }  ,fill opacity=1 ] (289,306) .. controls (289,303.24) and (291.24,301) .. (294,301) .. controls (296.76,301) and (299,303.24) .. (299,306) .. controls (299,308.76) and (296.76,311) .. (294,311) .. controls (291.24,311) and (289,308.76) .. (289,306) -- cycle ;
\draw  [fill={rgb, 255:red, 0; green, 0; blue, 0 }  ,fill opacity=1 ] (340.5,260.5) .. controls (340.5,257.74) and (342.74,255.5) .. (345.5,255.5) .. controls (348.26,255.5) and (350.5,257.74) .. (350.5,260.5) .. controls (350.5,263.26) and (348.26,265.5) .. (345.5,265.5) .. controls (342.74,265.5) and (340.5,263.26) .. (340.5,260.5) -- cycle ;

\draw (284,23.4) node [anchor=north west][inner sep=0.75pt]  [font=\fontsize{2.65em}{3.18em}\selectfont]  {$\mathrm{supp} \ f$};
\draw (268,273.4) node [anchor=north west][inner sep=0.75pt]  [font=\fontsize{2.65em}{3.18em}\selectfont]  {$y$};
\draw (365,245.4) node [anchor=north west][inner sep=0.75pt]  [font=\fontsize{2.65em}{3.18em}\selectfont]  {$y_{0}$};
\draw (284,175.4) node [anchor=north west][inner sep=0.75pt]  [font=\fontsize{2.65em}{3.18em}\selectfont]  {$B_{4}{}_{r}( y_{0})$};
\draw (295,96.4) node [anchor=north west][inner sep=0.75pt]  [font=\fontsize{2.65em}{3.18em}\selectfont]  {$B_{5}{}_{r}( y_{0})$};

\end{tikzpicture}

		}
		\caption{}
		
	\end{figure}
	\begin{proof}
		Choose a shortest geodesic line $\gamma$ connecting $y$ and $y_0$.	Then we have
		\begin{equation*}
			\bigg|\int_0^\infty t^{-\frac12}\int_M \mathcal{K}(y,z,t)f(z) dzdt\bigg|=\bigg|\int_0^\infty t^{-\frac12}\int_M \tau_\gamma(y_0,y;z)\mathcal{K}(y,z,t)f(z) dzdt\bigg|.
		\end{equation*}
            By the definition of $T$, we have the following estimate,
		\begin{equation*}
			\begin{split}
				&\quad||Tf|(y)-|Tf|(y_0)|\\
                &\leq \int_0^\infty t^{-\frac12}\int_M \bigg|\tau_\gamma(y_0,y;z)\mathcal{K}(y,z,t)-\mathcal{K}(y_0,z,t)\bigg||f|(z) dzdt\\
				&= \bigg\{\int_{\rho^{-2}d^2(y_0,y)}^\infty+\int_0^{\rho^{-2}d^2(y_0,y)}\bigg\} t^{-\frac12}
                   \int_M \bigg|\tau_\gamma(y_0,y;z)\mathcal{K}(y,z,t)-\mathcal{K}(y_0,z,t)\bigg||f|(z) dzdt\\
				&=:I+II.
			\end{split}
		\end{equation*}
        
		For the first term $I$, by condition (b) in Definition \ref{def5.1}, we have
	\begin{equation}
        \label{eq5.9}
		\begin{split}
		I&\leq \int_{\rho^{-2} d^2(y_0,y)}^\infty t^{-\frac12}\int_M \bigg|\tau_\gamma(y_0,y;z)\mathcal{K}(y,z,t)-\mathcal{K}(y_0,z,t)\bigg||f|(z) dzdt\\
			& \leq C_2\int_{\rho^{-2}d^2(y_0,y)}^\infty t^{-\frac12}\int_M V_{y_0}^{-1}(\sqrt{t})t^{-\frac{1+\alpha}{2}}d^\alpha(y,y_0)e^{-\frac{d^2(y_0,z)}{C_1t}}|f|(z)dzdt\\
			& \leq C_2\int_{0}^\infty t^{-\frac12}\int_M V_{y_0}^{-1}(\sqrt{t})
            t^{-\frac{1+\alpha}{2}}d^\alpha(y,y_0)e^{-\frac{d^2(y_0,z)}{C_1t}}|f|(z)dzdt. 
		\end{split}
	\end{equation} 	
	Choosing $i_0\in \IN^+$ such that $5^{i_0}r\leq 1<5^{i_0+1}r$ and defining
	\begin{equation*}
		\begin{split}
			A_i:=B_{5^{i+1}r}(x_0)\setminus B_{5^ir}(x_0),
		\end{split}
	\end{equation*}
        then $\textrm{supp}(f) \subset B \subset \cup_{i=2}^{\infty} A_i$. Thus, we can proceed from (\ref{eq5.9}) to obtain
		\begin{equation}
            \label{eq5.11}
			\begin{split}
			I \leq C_2\sum_{i=2}^{i_0}\int_0^\infty t^{-\frac12}\int_{A_i}  V_{y_0}^{-1}(\sqrt{t})t^{-\frac{1+\alpha}{2}}d^\alpha(y,y_0)e^{-\frac{d^2(y_0,z)}{C_1t}}|f|dzdt. 
			\end{split}
		\end{equation}
	 For each $i$, we have 
		\begin{equation}\label{eq5.12}
			\begin{split}
				&\quad\int_0^\infty t^{-\frac12}\int_{A_i}  V_{y_0}^{-1}(\sqrt{t})t^{-\frac{1+\alpha}{2}}d^\alpha(y,y_0)e^{-\frac{d^2(y_0,z)}{C_1t}}|f|\\
				&= \bigg(\int_{ (5^ir)^2}^\infty+\int_{0}^{(5^ir)^2} \bigg)\int_{A_i}  V_{y_0}^{-1}(\sqrt{t})t^{-\frac{1+\alpha}{2}}d^\alpha(y,y_0)e^{-\frac{d^2(y_0,z)}{C_1t}}|f|\\
				&=I_{1,i}+I_{2,i}.
			\end{split}
		\end{equation}
		For $I_{1,i}$, it follows from volume comparison that
		\begin{equation}\label{eq5.13}
			\begin{split}
				&\quad\int_{ (5^ir)^2}^\infty t^{-\frac12}\int_{A_i}  V_{y_0}^{-1}(\sqrt{t})t^{-\frac{1+\alpha}{2}}d^\alpha(y,y_0)e^{-\frac{d^2(y_0,z)}{C_1t}}|f|\\
				&\leq Cr^{\alpha} \cdot \left\{ \int_{ (5^ir)^2}^\infty t^{-\frac{2+\alpha}{2}} \right\} \cdot \left\{ \int_{A_i}V^{-1}_{y_0}(5^{i}r)|f| \right\}\\
				&\leq C5^{-\alpha i}\fint_{B_{5^{i+1}r}(y_0)} |f|
				 \leq C5^{-\alpha i}\mathbf{M}(|f|)(y_0).
			\end{split}
		\end{equation}
		We next estimate $I_{2,i}$.
            Direct calculation yields
		\begin{equation}\label{eq5.14}
			\begin{split}
				&\quad\int_{0}^{(5^ir)^2}t^{-\frac{1}{2}} \int_{A_i}  V_{y_0}^{-1}(\sqrt{t})t^{-\frac{1+\alpha}{2}}d^\alpha(y,y_0)e^{-\frac{d^2(y_0,z)}{C_1t}}|f|\\
				&\leq Cr^\alpha\int_{0}^{(5^ir)^2}t^{-\frac{2+\alpha}{2}}\int_{A_i} V^{-1}_{y_0}(5^ir)\bigg(\frac{5^ir}{\sqrt{t}}\bigg)^ne^{-\frac{5^{2(i-1)}r^2}{C_1t}}|f|\\
				&\leq Cr^\alpha\int_{0}^{(5^ir)^2}t^{-\frac{2+\alpha}{2}}\int_{B_{5^{i+1}r}(y_0)} V^{-1}_{y_0}(5^{i+1}r)\bigg(\frac{5^ir}{\sqrt{t}}\bigg)^ne^{-\frac{5^{2(i-1)}r^2}{C_1t}}|f|\\
				&\leq C5^{-\alpha i}\int_{0}^{(5^ir)^2}t^{-1}\bigg(\frac{5^ir}{\sqrt{t}}\bigg)^{n+\alpha} e^{-\frac{5^{2(i-1)}r^2}{C_1t}}dt\fint_{B_{5^{i+1}r}(y_0)}|f|\\
				&\leq C5^{-\alpha i}\mathbf{M}(|f|)(y_0),
			\end{split}
		\end{equation}
		where we use
		\begin{equation*}
			\begin{split}
				\int_{0}^{(5^ir)^2}t^{-1}\bigg(\frac{5^ir}{\sqrt{t}}\bigg)^{n+\alpha} e^{-\frac{5^{2(i-1)}r^2}{C_1t}}dt
				=2\int_{0}^1s^{-(n+1+\alpha)}e^{-\frac{5^{-2}}{C_1s^2}}ds
				\leq C.
			\end{split}
		\end{equation*}
		Plugging \eqref{eq5.13}, \eqref{eq5.14} into \eqref{eq5.12}, we obtain 
		\begin{equation}
			\int_0^\infty t^{-\frac12}\int_{A_i}  V_{y_0}^{-1}(\sqrt{t})t^{-\frac{1+\alpha}{2}}d^\alpha(y,y_0)e^{-\frac{d^2(y_0,z)}{C_1t}}|f|\leq C5^{-\alpha i}\mathbf{M}(|f|)(y_0).
		\end{equation}
		Hence, we have
		\begin{equation}\label{eq5.17}
			\begin{split}
				I\leq \sum_{i=2}^{i_0}C5^{-\alpha i}\mathbf{M}(|f|)(y_0)\leq C\mathbf{M}(|f|)(y_0).
			\end{split}
		\end{equation}
		
		For the second term $II$, we directly have
		\begin{equation}
        \label{eq5.18}
			\begin{split}
				II
				& \leq 	\int_0^{\rho^{-2}d^2(y_0,y)} t^{-\frac12}\int_M (|\mathcal{K}(y,z,t)|+|\mathcal{K}(y_0,z,t)|)|f|(z) dzdt\\
				&	\leq C_1	\int_0^{\rho^{-2}d^2(y_0,y)} t^{-\frac12}
                \int_M	\left\{V^{-1}_{y_0}(\sqrt{t}) + V^{-1}_{y}(\sqrt{t})\right\} e^{-C_0C_1t}e^{-\frac{d^2(y_0,z)}{C_1t}}|f|(z) dzdt. 
			\end{split}
		\end{equation}
		For any $z\in \mathrm{supp}f\subset B\setminus B_{5r}$, we have $d(y,z) \geq r$ by triangle inequality. Thus, 
        \begin{align}
            d(y_0,z) \leq d(y_0, y)+d(y,z) \leq 4r + d(y,z) \leq 5 d(y, z). 
        \label{eqn:HC17_5}    
        \end{align}
        By volume comparison, 
		\begin{equation}
        \label{eqn:HC17_6}
			\begin{split}
				V^{-1}_{y}(\sqrt{t})\leq CV^{-1}_{y}(\sqrt{t}+d(y_0,y))\bigg(1+\frac{d(y_0,y)}{\sqrt{t}}\bigg)^n\leq CV^{-1}_{y_0}(\sqrt{t})\bigg(1+\frac{d(y_0,y)}{\sqrt{t}}\bigg)^n.
			\end{split}
		\end{equation}
		Plugging (\ref{eqn:HC17_5}) and (\ref{eqn:HC17_6}) into \eqref{eq5.18}, we obtain 
		\begin{equation}
        \label{eqn:HC17_1}
			\begin{split}
				II&	\leq C	\int_0^{\rho^{-2}d^2(y_0,y)} t^{-\frac12}\int_M	V^{-1}_{y_0}(\sqrt{t})\bigg(1+\frac{d(y_0,y)}{\sqrt{t}}\bigg)^ne^{-C_0C_1t}e^{-\frac{d^2(y_0,z)}{25C_1t}}|f|(z) dzdt\\
					&\leq C	\int_0^{\rho^{-2}d^2(y_0,y)} t^{-\frac12}\int_M	V^{-1}_{y_0}(\sqrt{t})\bigg(\frac{d(y_0,y)}{\sqrt{t}}\bigg)^ne^{-\frac{d^2(y_0,z)}{25C_1t}}|f|(z) dzdt\\
				&=C	\sum_{i=2}^{i_0}\int_0^{\rho^{-2}d^2(y_0,y)} t^{-\frac12}\int_{A_i}	V^{-1}_{y_0}(\sqrt{t})\bigg(\frac{d(y_0,y)}{\sqrt{t}}\bigg)^ne^{-\frac{d^2(y_0,z)}{25C_1t}}|f|(z) dzdt\\
				&=:C\sum_i II_i.
			\end{split}
		\end{equation}
		For each $i$,
			\begin{equation}
            \label{eqn:HC17_2}
			\begin{split}
			II_i&=\int_0^{\rho^{-2}d^2(y_0,y)} t^{-\frac12}\int_{A_i}	V^{-1}_{y_0}(\sqrt{t})\bigg(\frac{d(y_0,y)}{\sqrt{t}}\bigg)^ne^{-\frac{5^{2(i-1)}r^2}{25C_1t}}|f|(z) dzdt\\
			&\leq C\int_0^{\rho^{-2}d^2(y_0,y)}  t^{-\frac12}\int_{A_i}	V^{-1}_{y_0}(5^{i+1}r)\bigg(\frac{5^{i}r d(y_0,y)}{t}\bigg)^ne^{-\frac{5^{2(i-1)}r^2}{25C_1t}}|f|(z) dzdt\\
			& \leq C \cdot \left\{ \int_0^{\rho^{-2}d^2(y_0,y)}  t^{-\frac12}	\bigg(\frac{5^{i}r d(y_0,y)}{t}\bigg)^ne^{-\frac{5^{2(i-1)}r^2}{25C_1t}}dt \right\} 
            \cdot \mathbf{M}(|f|)(y_0).
			\end{split}
		\end{equation}
		Since $d(y_0,y)\leq 4r$, by letting  $s=5^{i}r/\sqrt{t}$, we have
		\begin{equation}
        \label{eqn:HC17_3}
			\begin{split}
				&\quad\int_0^{\rho^{-2}d^2(y_0,y)}  t^{-\frac12}	\bigg(\frac{5^{i}r d(y_0,y)}{t}\bigg)^ne^{-\frac{5^{2(i-1)}r^2}{25C_1t}}dt\\
			    &\leq C\int_0^{16\rho^{-2}r^2}  t^{-\frac12}	\bigg(\frac{5^{i}r^2 }{t}\bigg)^ne^{-\frac{5^{2(i-1)}r^2}{25C_1t}}dt\\
			    &\leq C 5^{-(n-1)i}r\int_{0}^\infty  s^{2(n-1)}e^{-\frac{s^2}{625C_1}}ds
			    \leq C 5^{-(n-1)i}.
			\end{split}
		\end{equation}
		Combining (\ref{eqn:HC17_1}), (\ref{eqn:HC17_2}) and (\ref{eqn:HC17_3}), we arrive at 
	\begin{equation}
    \label{eq5.23}
		\begin{split}
			II\leq C \cdot \left\{\sum_{i=2}^{i_0}5^{-(n-1)i} \right\} \cdot \mathbf{M}(|f|)(y_0)\leq C\mathbf{M}(|f|)(y_0).
		\end{split}
		\end{equation}
		Then (\ref{eqn:HC17_4}) follows from the combination of~\eqref{eq5.17} and~\eqref{eq5.23}.
	\end{proof}

	\begin{lemma}
    \label{lm5.5}
 The same assumptions as in Lemma~\ref{lm5.4}. 
 Then there exists a positive constant $C=C(n,\rho,\alpha,\Lambda_0,C_0,C_1,C_2)$ such that the following property holds.
        
        If $\mathbf{M}(|Tf|^2)(y_0)\leq a^2$ and $\mathbf{M}(|f|^2)(y_0)\leq b^2$, then
		\begin{equation}
        \label{eqn:HC17_10}
			\begin{split}
				\mathbf{M}(|Tf|^2)(y)\leq C(a^2+b^2) 
			\end{split}
		\end{equation}
        for all $y \in B_{3r}(y_0)\cap 2B$. 
	\end{lemma}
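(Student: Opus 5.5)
The plan is to bound each average $\fint_{B_s(y)}|Tf|^2$ with $0<s<4$ directly, splitting into three ranges of the radius: $s<r$, $r\le s<1$, and $1\le s<4$. Two facts are used throughout. First, since $\mathrm{supp}f\subset B\setminus B_{5r}(y_0)$ already, no decomposition of $f$ is needed. Second, Cauchy--Schwarz gives $\mathbf{M}(|f|)(y_0)\le \mathbf{M}(|f|^2)(y_0)^{1/2}\le b$.

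\emph{Small radii, $s<r$.} Here $B_s(y)\subset B_{4r}(y_0)$, and we use the pointwise oscillation estimate of Lemma~\ref{lm5.4}. The restriction to $2B$ there is inessential. Its proof only uses $d(z,y_0)\le 4r$, the support condition on $f$ (which gives $d(y_0,w)\le 5d(z,w)$ for $w\in\mathrm{supp}f$), the kernel bounds (a), (b) and volume comparison. We therefore first observe that
\begin{equation*}
||Tf|(z)-|Tf|(y_0)|\le C\mathbf{M}(|f|)(y_0)\quad\text{for all } z\in B_{4r}(y_0).
\end{equation*}
The step $\rho^{-2}d^2$ in that proof should be checked to still be valid for such $z$. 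If it is not, we replace $\gamma$ by a chain of short geodesics, which costs only a constant.

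Applying the triangle inequality twice, for $z\in B_s(y)$ and $w\in B_r(y_0)$ we get $|Tf|(z)\le |Tf|(w)+2C\mathbf{M}(|f|)(y_0)$. Squaring and averaging in $w$ over $B_r(y_0)$ gives
\begin{equation*}
|Tf|^2(z)\le 2\fint_{B_r(y_0)}|Tf|^2+C\mathbf{M}(|f|)(y_0)^2\le 2a^2+Cb^2 .
\end{equation*}
Averaging in $z$ over $B_s(y)$ gives the bound for these radii.

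\emph{Intermediate radii, $r\le s<1$.} Here $B_s(y)\subset B_{s+3r}(y_0)\subset B_{4s}(y_0)$ and $4s<4$. Theorem~\ref{thm2.1} gives $|B_{4s}(y_0)|\le C|B_s(y)|$ with $C=C(n,\Lambda_0)$, since $4s\le 4$. Hence
\begin{equation*}
\fint_{B_s(y)}|Tf|^2\le C\fint_{B_{4s}(y_0)}|Tf|^2\le C\,\mathbf{M}(|Tf|^2)(y_0)\le Ca^2 .
\end{equation*}

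\emph{Large radii, $1\le s<4$.} Here the enlarged ball around $y_0$ may have radius $\ge4$, outside the range of $\mathbf{M}$, so we instead use the $L^2$ boundedness \eqref{eq5.3}. Since $y_0\in 2B$, we have $B\subset B_3(y_0)$, and therefore
\begin{equation*}
\int_M|Tf|^2\le C_3^2\int_{B}|f|^2\le C_3^2|B_3(y_0)|\fint_{B_3(y_0)}|f|^2\le C|B_3(y_0)|\,b^2 .
\end{equation*}
Moreover $d(y,y_0)<12$, so volume comparison gives $|B_3(y_0)|\le |B_{15}(y)|\le C|B_1(y)|\le C|B_s(y)|$. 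Thus $\fint_{B_s(y)}|Tf|^2\le Cb^2$.

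Taking the supremum over $s\in(0,4)$ gives $\mathbf{M}(|Tf|^2)(y)\le C(a^2+b^2)$ for all $y\in B_{3r}(y_0)\cap 2B$, with $C$ depending only on the constants of Lemma~\ref{lm5.4}, on $C_3$ and on $\Lambda_0$. The main obstacle is the first range: justifying that the oscillation estimate of Lemma~\ref{lm5.4} holds at points of $B_{4r}(y_0)$ lying outside $2B$. This requires rereading that proof to confirm that membership in $2B$ is never used beyond volume comparison.
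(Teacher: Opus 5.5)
Your proof is correct and is essentially the paper's argument: both split the radius into three ranges, using Lemma~\ref{lm5.4} for small balls, volume comparison to pass to a ball centred at $y_0$ for intermediate ones, and the $L^2$ bound \eqref{eq5.3} for large ones. Your refinements make the same proof more careful without changing it. You average over $B_r(y_0)$ instead of invoking $|Tf|(y_0)\le\mathbf{M}(|Tf|)(y_0)$, you use $B\subset B_3(y_0)$ where the paper writes $B_2(y_0)$, and you note that Lemma~\ref{lm5.4} never uses $y\in 2B$, which the paper assumes silently.
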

	
		\begin{proof}
		By H\"older inequality, 
		\begin{equation*}
			\begin{split}
				\mathbf{M}(|Tf|)(y_0)&\leq \sqrt{\mathbf{M}(|Tf|^2)(y_0)}\leq a, \\
				\mathbf{M}(|f|)(y_0)&\leq \sqrt{\mathbf{M}(|f|^2)(y_0)}\leq b.
			\end{split}
		\end{equation*}
        
	For $y\in B_{4r}(y_0)$, by Lemma \ref{lm5.4}, we have
	\begin{equation*}
		\begin{split}
			|Tf|(y)&\leq |Tf|(y_0)+||Tf|(y)-|Tf|(y_0)|\\
			&\leq \mathbf{M}(|Tf|)(y_0)+||Tf|(y)-|Tf|(y_0)|\\
			&\leq  a+Cb.
		\end{split}
	\end{equation*}

			For any $y\in B_{3r}(y_0)\cap 2B$, we now want to estimate
            \begin{align*}
               \sup_{0<s<4} \fint_{B_s(y)} |Tf|^2. 
            \end{align*}
        If $s\leq r$, we have
		\begin{equation}
			\fint_{B_s(y)}|Tf|^2\leq \sup_{B_{s}(y)} |Tf|^2 \leq \sup_{B_{4r}(y_0)} |Tf|^2 \leq C(a^2+b^2).
        \label{eqn:HC17_7}    
		\end{equation}
	    If $ r\leq s\leq \frac12$, $d(y,y_0)\leq 4s$. Thus, 
		\begin{equation}
			\fint_{B_s(y)}|Tf|^2\leq \frac{|B_{5s}(y_0)|}{|B_{5}(y)|}\fint_{B_{3s}(y_0)}|Tf|^2\leq Ca^2.
        \label{eqn:HC17_8}    
		\end{equation}
        If $\frac12\leq s\leq 4$, by \eqref{eq5.3}
        \begin{equation}
        \begin{split}
            \fint_{B_s(y)}|Tf|^2&\leq \frac{1}{|B_{\frac12}(y)|}\int_{B_{5s}(y_0)}|Tf|^2
             \leq \frac{C}{|B_{2}(y_0)|}\int_{B_{2}(y_0)}|f|^2\\
            &\leq C\mathbf{M}(|f|^2)(y_0) \leq C b^2.
        \end{split}
        \label{eqn:HC17_9}
		\end{equation}
        It follows from the combination of (\ref{eqn:HC17_7}), (\ref{eqn:HC17_8}) and (\ref{eqn:HC17_9}) that 
        \begin{align*}
            \mathbf{M}(|Tf|^2)(y)=\sup_{0<s<4} \fint_{B_s(y)} |Tf|^2 \leq C(a^2+b^2),  
        \end{align*}
        which is exactly (\ref{eqn:HC17_10}). 
	\end{proof}

	\begin{lemma}\label{lm5.6}
		Let $x_0\in 2B$ and $f\in L^2(B)$. Then for any $0<r\leq 4$, there exists a positive constant $N=N(n,\rho,\alpha,\Lambda_0,C_0,C_1,C_2,C_3)$ with the following property.

        For any $\varepsilon\in (0,1)$, we can choose a $\delta>0$ such that if 
		\begin{equation}\label{eq5.29}
			\begin{split}
				|\{x\in B_{5r}(x_0) \cap 2B:\mathbf{M}(|Tf|^2)(x)\leq 1, \; \mathbf{M}(|f|^2)(x)\leq \delta^2\}|
				\geq \frac{1}{2}|B_{5r}(x_0)\cap 2B|, 
			\end{split}
		\end{equation}
		then 
		\begin{equation*}
			\begin{split}
				|\{x\in B_{5r}(x_0) \cap 2B:\mathbf{M}(|Tf|^2)(x)> N^2\}|
                \leq \varepsilon |B_{5r}(x_0)\cap 2B|. 
			\end{split}
		\end{equation*}
	\end{lemma}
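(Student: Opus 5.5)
This is the standard "good-λ" density argument of Caffarelli–Peral type, localized to $2B$; Lemma \ref{lm5.5} and the $L^2$ bound \eqref{eq5.3} do the work. By \eqref{eq5.29} we may pick a point $y_0\in B_{5r}(x_0)\cap 2B$ with
\[
\mathbf{M}(|Tf|^2)(y_0)\le 1,\qquad \mathbf{M}(|f|^2)(y_0)\le \delta^2 .
\]
We split $f=f_1+f_2$ with $f_1=f\chi_{B_{50r}(x_0)}$ (or a ball about $y_0$ of comparable radius) and $f_2=f-f_1$. Then $f_2$ is supported in $B\setminus B_{5r'}(y_0')$ for suitable comparable data, so the oscillation control of Lemma \ref{lm5.4}/\ref{lm5.5} applies to it. The part $f_1$ carries small $L^2$ mass and is handled by the $L^2$ bound.

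\textbf{Far part.} Apply Lemma \ref{lm5.4} to $f_2$ with base point $y_0$, after rescaling $r$ by a fixed factor so that every point of $B_{5r}(x_0)$ lies in the region $B_{4r'}(y_0)$ while $\operatorname{supp} f_2$ avoids $B_{5r'}(y_0)$. This gives, for $x\in B_{5r}(x_0)\cap 2B$,
\[
|Tf_2|(x)\le |Tf_2|(y_0)+C\mathbf{M}(|f|)(y_0).
\]
We bound $|Tf_2|(y_0)\le |Tf|(y_0)+|Tf_1|(y_0)$ by averaging over small balls around $y_0$. Arguing as in Lemma \ref{lm5.5}, splitting scales $s\le r$, $r\le s\le 1/2$ and $s\ge 1/2$ and using Corollary \ref{coro2.2} for volume ratios, we get
\[
\mathbf{M}(|Tf_2|^2)(x)\le C(1+\delta^2)\le N_0^2
\]
on $B_{5r}(x_0)\cap 2B$, where $N_0$ depends only on the structural constants. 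The point requiring care is that $|Tf_2|(y_0)$ is a pointwise value, not an average. To avoid this, compare $|Tf_2|(x)$ with the average of $|Tf_2|$ over $B_r(y_0)$, using Lemma \ref{lm5.4} at each point of that ball. That average is controlled by $\mathbf{M}(|Tf|^2)(y_0)^{1/2}+\big(\fint_{B_r(y_0)}|Tf_1|^2\big)^{1/2}$. The second term is $\le C\big(\fint_{B_{50r}}|f|^2\big)^{1/2}\le C\delta$ by \eqref{eq5.3} and Corollary \ref{coro2.2}.

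\textbf{Local part.} By the weak $(1,1)$ bound for $\mathbf{M}$ in Theorem \ref{thm2.3}, applied to $|Tf_1|^2$, followed by \eqref{eq5.3},
\[
|\{\mathbf{M}(|Tf_1|^2)>N_0^2\}|\le \frac{C}{N_0^2}\int|Tf_1|^2\le \frac{C}{N_0^2}\int_{B_{50r}(x_0)}|f|^2 .
\]
Since $d(y_0,x_0)<5r$, the right-hand side is at most
\[
\frac{C}{N_0^2}\,|B_{55r}(y_0)|\,\mathbf{M}(|f|^2)(y_0)\le C\delta^2\,|B_{55r}(y_0)|\le C\delta^2\,|B_{5r}(x_0)\cap 2B|,
\]
using Corollary \ref{coro2.2} for the last step; this is exactly why that corollary was proved. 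For radii $r$ near $4$ where $55r$ exceeds the admissible range, we use Remark \ref{rmk2.3} instead, since $f$ is supported in $B$.

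\textbf{Conclusion.} We have $\mathbf{M}(|Tf|^2)\le 2\mathbf{M}(|Tf_1|^2)+2\mathbf{M}(|Tf_2|^2)$. Take $N^2=4N_0^2$. On the set where $\mathbf{M}(|Tf|^2)>N^2$ we must then have $\mathbf{M}(|Tf_1|^2)>N_0^2$, so that set has measure at most $C\delta^2|B_{5r}(x_0)\cap 2B|$. Choosing $\delta^2=\varepsilon/C$ finishes the proof.

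The main obstacle is the bookkeeping of radii. The ball on which $f_1$ lives must be large enough that $f_2$ satisfies the support hypothesis of Lemma \ref{lm5.4} relative to $y_0$ for all $x\in B_{5r}(x_0)$. Yet it must be small enough that its volume is comparable to $|B_{5r}(x_0)\cap 2B|$, including near $\partial(2B)$ and for $r$ close to $4$. Corollary \ref{coro2.2} and Remark \ref{rmk2.3} are meant to handle exactly these two constraints.
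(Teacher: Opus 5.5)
Your proof is correct. Its skeleton matches the paper's: split into local and far pieces, control the local piece by weak $(1,1)$ of $\mathbf{M}$ together with \eqref{eq5.3}, control the far piece by Lemma~\ref{lm5.4}, and take $N^2=4N_0^2$. The genuine difference is how you control the far piece at the base point.

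\textbf{The paper's route.} It meets the same obstruction you flagged: $\mathbf{M}(|Tf_1|^2)$ at the good point is not controlled by \eqref{eq5.3} at small scales. It resolves this by a second selection. Weak $(1,1)$ shows that $\{\mathbf{M}(|Tf_1|^2)>1\}$ fills less than half of $B_{5r}(x_0)\cap 2B$ once $\delta$ is small. Combined with \eqref{eq5.29}, this gives a point $x_2$ where also $\mathbf{M}(|Tf_1|^2)(x_2)\le 1$, hence $\mathbf{M}(|Tf_2|^2)(x_2)\le 4$. Lemma~\ref{lm5.5} is then quoted as a black box, with radius $4r$ and cutoff $B_{25r}(x_0)$.

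\textbf{Your route.} You stay at $y_0$ and replace the pointwise value $|Tf_2|(y_0)$ by its average over $B_r(y_0)$. Only scale-$r$ information on $Tf_1$ then enters, and \eqref{eq5.3} suffices. This saves the second selection. The cost is that the hypothesis $\mathbf{M}(|Tf_2|^2)(y_0)\le a^2$ of Lemma~\ref{lm5.5} is unavailable to you, so you must redo its three-scale argument by hand. At scales $s\ge r$ you bound the $Tf_1$ contributions via \eqref{eq5.3} and $\|f_1\|_{L^2}^2\le |B_{55r}(y_0)|\,\delta^2$, using Remark~\ref{rmk2.3} for large radii.

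\textbf{Two small points.} First, Lemma~\ref{lm5.4} with the single base point $y_0$, plus the triangle inequality, already gives $|Tf_2|(x)\le |Tf_2|(z)+2C\delta$ for $z\in B_r(y_0)$. There is no need to re-center at each $z$, which would produce $\mathbf{M}(|f_2|)(z)$ instead of $\mathbf{M}(|f_2|)(y_0)$. Second, your closing estimate $C\delta^2|B_{5r}(x_0)\cap 2B|$, obtained via Corollary~\ref{coro2.2}, is the scale-correct relative bound the lemma asserts. The paper's last line instead bounds the bad set by the absolute quantity $\delta^2$ and sets $\delta=\sqrt{\varepsilon}$. Its intermediate bound $C N^{-2}|B_{30r}(x_1)|\delta^2$, combined with Corollary~\ref{coro2.2}, would give the correct version.
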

	
	\begin{proof}
		Define $f_1=f \chi_{B_{25r}(x_0)}$ where $\chi$ is the character function of a set.
        Then $f_1(x)=f(x)$ if $x \in B_{25r}(x_0)$ and $f_1(x)=0$ otherwise.
        Define $f_2=f-f_1$.
        By \eqref{eq5.29}, there exists a point $x_1\in B_{5r}(x_0)\cap 2B$ such that  
		\begin{equation*}
			\begin{split}
				\mathbf{M}(|f_1|^2)(x_1)\leq \mathbf{M}(|f|^2)(x_1)\leq \delta^2 .
			\end{split}
		\end{equation*}
		It follows that (cf. Remark \ref{rmk2.3})
		\begin{equation}
        \label{eqn:HC18_1}
			\begin{split}
				\int_{B_{30r}(x_1)} |f_1|^2 \leq  |B_{30r}(x_1)| \cdot \mathbf{M}(|f_1|^2)(x_1)
                \leq  |B_{30r}(x_1)| \delta^2. 
			\end{split}
		\end{equation}
		Since $\mathbf{M}$ is of weak type $(1,1)$ and $T$ is of strong type $(2,2)$, we have
		\begin{equation*}
			\begin{split}
				&\quad |\{x\in B_{5r}(x_0) \cap 2B: \mathbf{M}(|Tf_1|^2)(x)>1\}| \\
                &\leq C \|Tf_1\|^2_{L^2(B_{30r}(x_1))}\leq C\|f_1\|^2_{L^2(B_{30r}(x_1))}.
			\end{split}
		\end{equation*}
		Thus, by \eqref{eqn:HC18_1},
		\begin{equation}
			\begin{split}
				|\{x\in B_{5r}(x_0)\cap 2B:\mathbf{M}(|Tf_1|^2)(x)>1\}|\leq C\delta^2 |B_{30r}(x_1)|.
			\end{split}
		\end{equation}
		Thanks to Corollary \ref{coro2.2}, we can choose $\delta$ so small such that
		\begin{equation}\label{eq5.36}
			\begin{split}
				|\{x\in B_{5r}(x_0)\cap 2B:\mathbf{M}(|Tf_1|^2)(x)>1\}|
                <\frac12|B_{5r}(x_0)\cap 2B|.
			\end{split}
		\end{equation}
		From \eqref{eq5.29}  and \eqref{eq5.36}, there exists $x_2\in B_{5r}(x_0)\cap 2B$ such that
		\begin{equation*}
			\begin{split}
				\mathbf{M}(f^2)(x_2)\leq \delta^2,\quad 	\mathbf{M}(|Tf|^2)(x_2)\leq 1, \quad \mathbf{M}(|Tf_1|^2)(x_2)\leq 1.
			\end{split}
		\end{equation*}
		On the other hand, 
		\begin{equation*}
			\begin{split}
				\mathbf{M}(|f_2|^2)(x_2)\leq \mathbf{M}(f^2)(x_2)\leq \delta^2.
			\end{split}
		\end{equation*}
	    Note that 
		\begin{equation*}
			\begin{split}
				|Tf_2|^2=|Tf-Tf_1|^2\leq 2(|Tf|^2+|Tf_1|^2).
			\end{split}
		\end{equation*}
		By the sub-additive of $\mathbf{M}$, we have 
		\begin{equation*}
			\begin{split}
				\mathbf{M}(|Tf_2|^2)(x_2)\leq 2(\mathbf{M}(|Tf|^2)(x_2)+\mathbf{M}(|Tf_1|^2)(x_2))\leq 4.
			\end{split}
		\end{equation*}
		Applying Lemma \ref{lm5.5} with $x_2$ and radius $4r$, and noting that $B_{5r}(x_0) \subset B_{12r}(x_2)$, we have 
        \begin{equation*}
          \mathbf{M}(|Tf_2|^2)(x)
          \leq C \left\{\mathbf{M}(|f_2|^2)(x_2) + \mathbf{M}(|Tf_2|^2)(x_2)\right\}
          \leq C(4+\delta^2) \leq 5C
        \end{equation*}
        for all $x \in B_{5r}(x_0)\cap 2B$. 
        Fix the last $C$ in the above inequality and choose
        \begin{align*}
            N^2 >\mathrm{max}\{5^n, 5C\},  
        \end{align*}
        where $N$ is a large number whose exact value will be determined later (cf. (\ref{eqn:HC18_2})). 
        Then we have
		\begin{equation*}
			\begin{split}
				\mathbf{M}(|Tf|^2)(x)
                &\leq 2 \left\{\mathbf{M}(|Tf_1|^2)(x)+\mathbf{M}(|Tf_2|^2)(x) \right\}\\
                &\leq 2 \tilde{C}^2 + 2\mathbf{M}(|Tf_1|^2)(x)
                \leq \frac{N^2}{2} + 2\mathbf{M}(|Tf_1|^2)(x), 
			\end{split}
		\end{equation*}
        for any $x\in B_{5r}(x_0)\cap 2B$. Rewriting the above inequality as
        \begin{align*}
            \mathbf{M}(|Tf_1|^2)(x)> \frac12 \mathbf{M}(|Tf|^2)(x) -\frac{N^2}{4},  
        \end{align*}
        it is clear that 
		\begin{equation*}
			\begin{split}
				 \{x\in B_{5r}(x_0)\cap 2B:\mathbf{M}(|Tf|^2)(x)>N^2\}
				 \subset \{x\in B_{5r}(x_0)\cap 2B:\mathbf{M}(|Tf_1|^2)(x)>N^2/4\}.
			\end{split}
		\end{equation*}
		Applying the weak type $(1,1)$ inequality of $\mathbf{M}$, the volume comparison
        and inequality (\ref{eqn:HC18_1}), we have 
		\begin{equation*}
			\begin{split}
				&\quad	|\{x\in B_{5r}(x_0)\cap 2B:\mathbf{M}(|Tf|^2)(x)>N^2\}|\\
				&\leq |\{x\in B_{5r}(x_0)\cap 2B:\mathbf{M}(|Tf_1|^2)(x)>N^2/4\}|\\
				&\leq C\frac{\|Tf_1\|^2_{L^2}}{N^2/4}\leq \frac{4CC_2}{N^2}\|f_1\|^2_{L^2} 
                \leq  \frac{4CC_2 \cdot |B_{30r}(x_1)|}{N^2}  \delta^2.  
			\end{split}
		\end{equation*}
        By volume comparison, $|B_{30r}(x_1)| \leq |B_{120}(x_1)|$ is bounded by a uniform constant $C'$ depending only on $n,\Lambda_0$.  Thus, we can choose a uniform $N$ sufficiently large such that
        \begin{equation}
         \label{eqn:HC18_2}
			\begin{split}
				|\{x\in B_{5r}(x_0)\cap 2B:\mathbf{M}(|Tf|^2)(x)>N^2\}|
                \leq  \frac{4CC_2 \cdot |B_{30r}(x_1)|}{N^2}  \delta^2
                <\delta^2. 
			\end{split}
		\end{equation}
		 The proof is complete by setting $\delta=\sqrt{\varepsilon}$. 
	\end{proof}
	
	\begin{lemma}\label{lm5.7}
		Let $\varepsilon, \delta$ be given by Lemma \ref{lm5.6}. Suppose that
		\begin{equation*}
			\begin{split}
				|\{x\in 2B:\mathbf{M}(|Tf|^2)(x)\geq N^2\}|\leq \varepsilon|2B|.
			\end{split}
		\end{equation*}
		Then 
		\begin{equation*}
			\begin{split}
				&\quad |\{x\in 2B:\mathbf{M}(|Tf|^2)(x)> N^2\}|\\
                &\leq \varepsilon_1 \bigg(|\{x\in 2B:\mathbf{M}(|Tf|^2)(x)>1 \}|
				   +|\{x\in 2B:\mathbf{M}(|f|^2)(x)> \delta^2\}|	\bigg)\\
			\end{split}
		\end{equation*}
		where $\varepsilon_1=C(\Lambda_0,n)\varepsilon$.
	\end{lemma}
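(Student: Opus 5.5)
This is the standard Vitali-type covering argument (Caffarelli–Peral / Wang style) applied inside $2B$. Set
\[
E:=\{x\in 2B:\mathbf{M}(|Tf|^2)(x)>N^2\},\qquad F:=\{x\in 2B:\mathbf{M}(|Tf|^2)(x)>1\}\cup\{x\in 2B:\mathbf{M}(|f|^2)(x)>\delta^2\}.
\]
Since $N\geq 1$, we have $E\subset F$. The goal is $|E|\leq \varepsilon_1|F|$, which suffices because $|F|$ is at most the sum of the two measures on the right-hand side.

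\textbf{Stopping radius.} For a.e. $x\in E$, take $x$ to be a Lebesgue density point of $E$ relative to $2B$. Define
\[
\phi_x(r):=\frac{|E\cap B_{5r}(x)|}{|B_{5r}(x)\cap 2B|}.
\]
\begin{itemize}
\item As $r\to 0$, $\phi_x(r)\to 1>\varepsilon$.
\item For $r=4$, the ball $B_{20}(x)$ contains $2B$, so $\phi_x(4)=|E|/|2B|\leq\varepsilon$ by hypothesis.
\item $\phi_x$ is continuous in $r$.
\end{itemize}
So there is a largest radius $r_x\in(0,4)$ with $\phi_x(r_x)=\varepsilon$, and $\phi_x(r)<\varepsilon$ for all $r\in(r_x,4]$.

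\textbf{Covering.} By the Vitali covering lemma on the locally doubling space (radii are bounded by $4$), select disjoint balls $B_{5r_i}(x_i)$ from $\{B_{5r_x}(x)\}$ so that $E$ is covered, up to a null set, by $\bigcup_i B_{25r_i}(x_i)$. Since $5r_i<25r_i$, maximality gives
\[
|E\cap B_{25r_i}(x_i)|\leq\varepsilon\,|B_{25r_i}(x_i)\cap 2B|\quad\text{when }5r_i\leq 4.
\]
If instead $5r_i>4$, the ball $B_{25r_i}(x_i)$ contains $2B$ and the bound $|E|\leq\varepsilon|2B|$ is used directly. Corollary \ref{coro2.2} (volume doubling of $B_{\cdot}(x_i)\cap 2B$) gives
\[
|B_{25r_i}(x_i)\cap 2B|\leq C\,|B_{5r_i}(x_i)\cap 2B|,
\]
and hence
\[
|E|\leq \sum_i|E\cap B_{25r_i}(x_i)|\leq C\varepsilon\sum_i|B_{5r_i}(x_i)\cap 2B|.
\]

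\textbf{Key step.} It remains to show that each selected ball satisfies
\[
|B_{5r_i}(x_i)\cap 2B|\leq 2|F\cap B_{5r_i}(x_i)|.
\]
This is exactly the contrapositive of Lemma \ref{lm5.6}. Suppose instead that more than half of $B_{5r_i}(x_i)\cap 2B$ lies outside $F$, that is, on the set where $\mathbf{M}(|Tf|^2)\leq1$ and $\mathbf{M}(|f|^2)\leq\delta^2$. Then Lemma \ref{lm5.6} gives
\[
|E\cap B_{5r_i}(x_i)|\leq\varepsilon|B_{5r_i}(x_i)\cap 2B|.
\]
One needs strict inequality here to contradict $\phi_{x_i}(r_i)=\varepsilon$, and the final estimate \eqref{eqn:HC18_2} in that proof is strict, so the contradiction goes through. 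Using disjointness of the selected balls, summing yields
\[
|E|\leq 2C\varepsilon\sum_i|F\cap B_{5r_i}(x_i)|\leq 2C\varepsilon|F|,
\]
so we may take $\varepsilon_1=2C(n,\Lambda_0)\varepsilon$.

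The main obstacle is the boundary and collapsing issue: all measures are taken relative to $2B$, so the doubling constant needed for the covering must be uniform for the truncated balls $B_{s}(x)\cap 2B$. That is precisely what Corollary \ref{coro2.2} provides for $r\leq4$. The large-radius case, where the balls swallow $2B$, is handled separately via the hypothesis on $|E|$.
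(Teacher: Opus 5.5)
Your proposal is correct and follows the paper's proof. The ingredients are the same: stopping radii at which the density of $\{\mathbf{M}(|Tf|^2)>N^2\}$ in $B_s(x)\cap 2B$ equals $\varepsilon$, a Vitali covering, the contrapositive of Lemma \ref{lm5.6} on each selected ball, Corollary \ref{coro2.2} for doubling of the truncated balls, and disjointness. Your $B_{5r}$ normalization and your explicit treatment of the large-radius case and of the strict inequality needed from Lemma \ref{lm5.6} are points the paper passes over.

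On the strictness: \eqref{eqn:HC18_2} as written bounds the measure by $\delta^2$ rather than by $\varepsilon|B_{5r}(x_0)\cap 2B|$, so citing it does not give the strict inequality you need. It is cleaner to take $\delta$ from Lemma \ref{lm5.6} applied with $\varepsilon/2$; that $\delta$ also serves for $\varepsilon$, and it makes the inequality strict.
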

	
	\begin{proof}	
		Define
		\begin{align*}
			&U:=\{x\in 2B: \mathbf{M}(|Tf|^2)(x)>N^2\}, \\
			&V:=\{x\in 2B: \mathbf{M}(|Tf|^2)(x)>1\}\cup \{x\in 2B: \mathbf{M}(|f|^2)(x)>\delta^2\}.
		\end{align*}
		Since $|U|\leq \varepsilon |2B|$, we see that  for almost  every $x\in U$, there exists an $r_x<4$ such that $|U\cap B_{r_x}(x)|=\varepsilon |B_{r_x}(x)\cap 2B|$ and 
		\begin{equation*}
			|U\cap B_{r}(x)|<\varepsilon |B_{r}(x)\cap 2B|
		\end{equation*}
		for all $r_x<r\leq 4$. By Vitali's covering lemma, there exist countably many disjoint balls $\{B_{r_{x_k}}(x_k)\}$ such that $\cup_{k}B_{5r_{x_k}}(x_k)\cap 2B\supset U$.
		
		By the choice of $B_{r_x}(x)$ and applying Lemma \ref{lm5.6}, we have 
		\begin{equation}\label{eq5.51}
			|V \cap B_{r_{x_k}}(x_k)| \geq \frac{1}{2} |B_{r_{x_k}}(x_k) \cap 2B|.
		\end{equation}
		It follows that
		\begin{equation}\label{eq5.52}
			\begin{split}
				|U| \leq \sum_k |U \cap B_{5r_{x_k}}(x_k)| 
				\leq \varepsilon \sum_k |B_{5r_{x_k}}(x_k) \cap 2B|.
			\end{split}
		\end{equation}
		By volume comparison (cf. Corollary \ref{coro2.2}), there exists a constant $C > 0$ such that
		\begin{equation}\label{eq5.53}
			|B_{5r_{x_k}}(x_k) \cap 2B| \leq C |B_{r_{x_k}}(x_k) \cap 2B|.
		\end{equation}
		Plugging \eqref{eq5.51} and \eqref{eq5.53} into \eqref{eq5.52}, we obtain
		\begin{equation*}
			\begin{split}
				|U| \leq C \varepsilon \sum_k |B_{r_{x_k}}(x_k) \cap 2B| 
				\leq 2C \varepsilon \sum_k |B_{r_{x_k}}(x_k) \cap V|.
			\end{split}
		\end{equation*}
		Since the balls $\{B_{r_{x_k}}(x_k)\}_k$ are pairwise disjoint, it follows that 
		\begin{equation*}
			\begin{split}
				|U| \leq 2C \varepsilon \left| \left( \bigcup_k B_{r_{x_k}}(x_k) \right) \cap V \right|
				\leq \varepsilon_1 |V|,
			\end{split}
		\end{equation*}
		where we choose $\varepsilon_1 = 2C \varepsilon$.
	\end{proof}
	We then establish the good-$\lambda$ inequality, a technique originating from the seminal work of Burkholder and Gundy \cite{BG70} (see also Coifman and Fefferman \cite{CF74}).
    
	\begin{corollary}
        \label{coro5.8}
		For  $\varepsilon_1=2C\varepsilon$ and $\lambda\geq 1$, if $\|f\|^2_{L^2}\leq C^{-1}\varepsilon N^2|2B|$, then the following holds
		\begin{equation}
        \label{eqn:HC18_3}
			\begin{split}
				&\quad |\{x\in 2B:\mathbf{M}(|Tf|^2)(x)>\lambda^2 N^2\}|\\
                &\leq \varepsilon_1
                \bigg(|\{x\in 2B:\mathbf{M}(|Tf|^2)(x)>\lambda^2 \}|
				+|\{x\in 2B:\mathbf{M}(|f|^2)(x)>\lambda^2 \delta^2\}|	\bigg). 
			\end{split}
		\end{equation}
	\end{corollary}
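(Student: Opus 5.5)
The plan is to derive Corollary~\ref{coro5.8} from Lemma~\ref{lm5.7} by a scaling argument. Since $T$ is linear, we have $T(f/\lambda)=(Tf)/\lambda$. Likewise $\mathbf{M}(|g/\lambda|^2)=\lambda^{-2}\mathbf{M}(|g|^2)$. So for $\lambda\geq 1$ we set $f_\lambda:=f/\lambda$. Then $\mathrm{supp}f_\lambda\subset B$ and $f_\lambda\in L^2(B)$. Each set in \eqref{eqn:HC18_3} becomes the corresponding set for $f_\lambda$ with $\lambda=1$:
\begin{equation*}
\{x\in 2B:\mathbf{M}(|Tf|^2)(x)>\lambda^2N^2\}=\{x\in 2B:\mathbf{M}(|Tf_\lambda|^2)(x)>N^2\},
\end{equation*}
and similarly for the thresholds $\lambda^2$ and $\lambda^2\delta^2$. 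Hence it suffices to apply Lemma~\ref{lm5.7} to $f_\lambda$, and the only thing to check is its hypothesis
\begin{equation*}
|\{x\in 2B:\mathbf{M}(|Tf_\lambda|^2)(x)\geq N^2\}|\leq \varepsilon|2B|.
\end{equation*}

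To verify this hypothesis, we use the weak type $(1,1)$ bound of $\mathbf{M}$ from Theorem~\ref{thm2.3}, applied to $|Tf_\lambda|^2$. Combined with the $L^2$ bound \eqref{eq5.3}, it gives
\begin{equation*}
|\{\mathbf{M}(|Tf_\lambda|^2)\geq N^2\}|\leq \frac{C}{N^2}\|Tf_\lambda\|_{L^2}^2\leq \frac{CC_3^2}{N^2\lambda^2}\|f\|_{L^2}^2\leq \frac{CC_3^2}{N^2}\|f\|_{L^2}^2 .
\end{equation*}
The last step uses $\lambda\ge1$. Now take the constant $C$ in the hypothesis $\|f\|^2_{L^2}\le C^{-1}\varepsilon N^2|2B|$ to be this same product $CC_3^2$, which depends only on $n,\Lambda_0,C_3$. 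The right-hand side is then at most $\varepsilon|2B|$, so Lemma~\ref{lm5.7} applies to $f_\lambda$.

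Lemma~\ref{lm5.7} then yields
\begin{equation*}
|\{\mathbf{M}(|Tf_\lambda|^2)>N^2\}|\le\varepsilon_1\bigl(|\{\mathbf{M}(|Tf_\lambda|^2)>1\}|+|\{\mathbf{M}(|f_\lambda|^2)>\delta^2\}|\bigr),
\end{equation*}
with all sets taken in $2B$ and $\varepsilon_1=2C\varepsilon$. Undoing the scaling gives exactly \eqref{eqn:HC18_3}.

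The only delicate point is that $\delta$ and $N$ in Lemmas~\ref{lm5.6} and~\ref{lm5.7} are chosen independently of $f$. This holds because those constants depend only on $n,\rho,\alpha,\Lambda_0,C_0,\dots,C_3$ and $\varepsilon$, so applying the lemmas to $f_\lambda$ with the same $\varepsilon,\delta,N$ is legitimate. I do not expect any real obstacle beyond this constant bookkeeping.
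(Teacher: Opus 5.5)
Your proposal is correct and takes essentially the same route as the paper. You reduce to $\lambda=1$ by replacing $f$ with $f/\lambda$, verify the hypothesis of Lemma~\ref{lm5.7} via the weak type $(1,1)$ bound for $\mathbf{M}$ and the $L^2$ bound \eqref{eq5.3} for $T$, and then apply Lemma~\ref{lm5.7}; your explicit remark that $\lambda\geq 1$ is what carries the smallness hypothesis $\|f\|_{L^2}^2\leq C^{-1}\varepsilon N^2|2B|$ over to $f/\lambda$ is a step the paper leaves implicit.
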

	
	\begin{proof}
    Changing $f$ to $f/\lambda$ if necessary, we can always assume $\lambda=1$.
	Since $\mathbf{M}$ is of weak type $(1,1)$ and $T$ is of strong type $(2,2)$,   we have
		\begin{equation*}
			|\{x\in 2B:\mathbf{M}(|Tf|^2)(x)\geq N^2\}|\leq \frac{C}{N^{2}}\int_{M}|Tf|^2
            \leq \frac{C}{N^{2}}\int_{M}|f|^2. 
		\end{equation*}
		By the condition, we have
		\begin{equation*}
			\begin{split}
				&|\{x\in 2B:\mathbf{M}(|Tf|^2)(x)\geq N^2\}|\leq \varepsilon|2B|.
			\end{split}
		\end{equation*}
        Then Lemma \ref{lm5.7} applies and (\ref{eqn:HC18_3}) follows directly. 
	\end{proof}

	\begin{proof}[Proof of Proposition \ref{prop5.3}:]
		Define
		\begin{equation}
			\tilde{f}:=\frac{f}{\omega},  \quad \omega:=\sqrt{C\varepsilon^{-1}|2B|^{-1}}N^{-1}\|f\|_{L^2}.
        \label{eqn:HC21_1}    
		\end{equation}
        The layer cake representation implies 
        \begin{align}
          \int_{2B}(\mathbf{M}(|T\tilde{f}|^2)(x))^{\frac{p}{2}}
		 =\frac{p}{2}\int_0^\infty t^{\frac{p}{2}-1}|\{x\in  2B:\mathbf{M}(|T\tilde{f}|^2)(x)>t\}|dt.   
        \label{eqn:HC20_1}
        \end{align}
        Separating $(0,\infty)$ as $(0, N) \cup [N, \infty)$ and changing variable, we have 
		\begin{equation}
        \label{eqn:HC20_2}
			\begin{split}
				&\quad  \int_0^\infty t^{\frac{p}{2}-1}|\{x\in  2B:\mathbf{M}(|T\tilde{f}|^2)(x)>t\}|dt\\
				&=N^p \left\{ \int_1^\infty +\int_0^1 \right\} t^{\frac{p}{2}-1}|\{x\in  2B:\mathbf{M}(|T\tilde{f}|^2)(x)>N^2t\}|dt\\
                &\leq N^p  \int_1^\infty t^{\frac{p}{2}-1}|\{x\in  2B:\mathbf{M}(|T\tilde{f}|^2)(x)>N^2t\}|dt +N^p |2B|.
			\end{split}
		\end{equation}
		By Corollary \ref{coro5.8}, we have
		\begin{equation*}
			\begin{split}
				& \quad \int_1^\infty t^{\frac{p}{2}-1}|\{x\in  2B:\mathbf{M}(|T\tilde{f}|^2)(x)>N^2t\}|dt\\
				&\leq \varepsilon_1 \int_1^\infty t^{\frac{p}{2}-1} \bigg(|\{x\in  2B:\mathbf{M}(|T\tilde{f}|^2)(x)>t \}|
				  +|\{x\in  2B:\mathbf{M}(|\tilde{f}|^2)(x)>t \delta^2\}|	\bigg)dt\\
				&\leq \varepsilon_1 \int_1^\infty t^{\frac{p}{2}-1} |\{x\in  2B:\mathbf{M}(|T\tilde{f}|^2)(x)>t \}|dt\\
				&\quad +\frac{\varepsilon_1}{\delta^p}\int_{\delta^2}^\infty t^{\frac{p}{2}-1}	|\{x\in 2B:\mathbf{M}(|\tilde{f}|^2)(x)>t \}|dt\\
                &\leq \varepsilon_1 \int_{0}^\infty t^{\frac{p}{2}-1} |\{x\in  2B:\mathbf{M}(|T\tilde{f}|^2)(x)>t \}|dt\\
				&\quad +\frac{\varepsilon_1}{\delta^p}\int_{0}^\infty t^{\frac{p}{2}-1}	|\{x\in 2B:\mathbf{M}(|\tilde{f}|^2)(x)>t \}|dt. 
			\end{split}
		\end{equation*}
		Plugging the above inequality and (\ref{eqn:HC20_2}) into (\ref{eqn:HC20_1}), we obtain
		\begin{equation*}
			\begin{split}
				&\quad \frac{p}{2}\int_0^\infty t^{\frac{p}{2}-1}|\{x\in  2B:\mathbf{M}(|T\tilde{f}|^2)(x)>t\}|dt\\
				&\leq(\varepsilon_1N^p)\frac{p}{2}\int_0^\infty t^{\frac{p}{2}-1} 
                |\{x\in  2B:\mathbf{M}(|T\tilde{f}|^2)(x)>t \}|dt\\
				&\quad +(\varepsilon_1N^p)\frac{p}{2 \delta^p} 
                \|\mathbf{M} (|\tilde{f}|^2)\|_{L^p(2B)}+\frac{p}{2}N^p|2B|.
			\end{split}
		\end{equation*}
        Choosing $\varepsilon$ sufficiently small such that  $\varepsilon_1<\frac{1}{2N^p}$, we have
		\begin{equation}
        \label{eqn:HC21_4}
			\begin{split}
				&\quad \frac{p}{2}\int_0^\infty t^{\frac{p}{2}-1}|\{x\in  2B:\mathbf{M}(|T\tilde{f}|^2)(x)>t\}|dt\\
				& \leq \frac{p}{2\delta^p}\|\mathbf{M} (|\tilde{f}|^2)\|_{L^{\frac{p}{2}}(2B)}+pN^p|2B|\\
				& \leq \frac{Cp}{\delta^p}\| \tilde{f}\|_{L^p(2B)}+pN^p|2B|, 
			\end{split}
		\end{equation}
		where we use the strong type $(\frac{p}{2},\frac{p}{2})$ inequality of $\mathbf{M}$.
        Recall (\ref{eqn:HC20_1}). It follows from (\ref{eqn:HC21_4}) that
        \begin{align*}
          \int_{2B}(\mathbf{M}(|T\tilde{f}|^2)(x))^{\frac{p}{2}}
          \leq \frac{C}{\delta^p}\| \tilde{f}\|_{L^p(2B)}+pN^p|2B|. 
        \end{align*}
		Since $\tilde{f}$ is a rescaling of $f$ by (\ref{eqn:HC21_1}), the above inequality is equivalent to 
		\begin{equation}\label{eq5.68}
			\begin{split}
				\int_{2B}(\mathbf{M}(|Tf|^2)(x))^{\frac{p}{2}}\leq \frac{2Cp}{\delta^p}\int_{2B}|f|^p+Cp\varepsilon^{-\frac{p}{2}}|2B|^{1-\frac{p}{2}}\|f\|^{p}_{L^2}.
			\end{split}
		\end{equation}
		The H\"older inequality implies that
		\begin{equation*}
			\begin{split}
				|2B|^{1-\frac{p}{2}}\|f\|^{p}_{L^2}=\bigg(|2B|^{\frac{2}{p}-1}\int_{2B}|f|^2\bigg)^{\frac{p}{2}}\leq \int_{2B} |f|^p.
			\end{split}
		\end{equation*}
		Plugging it into \eqref{eq5.68} and absorbing $p,\epsilon$ and $\delta$ into $C$, we arrive at 
		\begin{equation*}
			\int_{2B}(\mathbf{M}(|Tf|^2)(x))^{\frac{p}{2}}\leq C\int_{B} |f|^p, 
		\end{equation*}
		which  yields  (\ref{eqn:HC21_5}). 
	\end{proof}
    
	Next, we estimate $\|Tf\|_{L^p(M\setminus 2B)}$.
    
	\begin{proposition}\label{lm5.9}
		For $1\leq p<\infty$ and $f\in L^p(\Lambda^jT^*M)$ with $\mathrm{supp}f\subset B$, we have
		\begin{equation}
			\int_{M\setminus 2B}|Tf|^p\leq C\int_{B}|f|^p. 
         \label{eqn:HC18_4}   
		\end{equation}
	\end{proposition}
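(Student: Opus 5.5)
The plan is to estimate $Tf$ pointwise on $M\setminus 2B$ directly from the size condition (a) of Definition~\ref{def5.1}. The key point is that for $x\notin 2B$ and $y\in B$ we have $d(x,y)\geq \frac12 d(x,q)\geq 1$, so the kernel is off-diagonal. Hence the time integral in $T$ has no singularity, and no cancellation or $L^2$ bound is needed. For $x\in M\setminus 2B$ I would write
\begin{equation*}
|Tf|(x)\leq C_2\int_0^\infty t^{-1}V_x^{-1}(\sqrt t)e^{-C_0C_1t}\int_B e^{-\frac{d^2(x,y)}{C_1t}}|f|(y)\,dy\,dt .
\end{equation*}
Since $d(x,y)\geq d(x,q)/2$, the inner Gaussian is bounded by $e^{-\frac{d^2(x,q)}{4C_1t}}$. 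By H\"older, $\int_B|f|\leq |B|^{1-1/p}\|f\|_{L^p(B)}$. This gives
\begin{equation*}
|Tf|(x)\leq C\,|B|^{1-\frac1p}\|f\|_{L^p}\,\Phi(x),\qquad \Phi(x):=\int_0^\infty t^{-1}V_x^{-1}(\sqrt t)e^{-C_0C_1t}e^{-\frac{d^2(x,q)}{4C_1t}}\,dt .
\end{equation*}

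The next step is to bound $\Phi$ using Theorem~\ref{thm2.1}. I would compare $V_x(\sqrt t)$ with $V_q$ through $B_{\sqrt t+d}(x)\supset B_1(q)$, where $d=d(x,q)$. This yields
\begin{equation*}
V_x^{-1}(\sqrt t)\leq C\,|B|^{-1}e^{\sqrt{(n-1)\Lambda_0}(\sqrt t+d)}\Big(\frac{\sqrt t+d}{\sqrt t}\Big)^n\Big(1+\frac1{\sqrt t}\Big)^{n},
\end{equation*}
where the last factor handles $\sqrt t<1$. The exponential volume growth $e^{C\sqrt{\Lambda_0}d}$ is the main obstacle. To control it, I would use the two decaying factors $e^{-C_0C_1t}$ and $e^{-d^2/(4C_1t)}$. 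By AM--GM,
\begin{equation*}
C_0C_1t+\frac{d^2}{4C_1t}\geq \sqrt{C_0}\,d .
\end{equation*}
Keeping half of each term, one gets decay $e^{-\frac12\sqrt{C_0}d}$. Because $C_0=64n(\sqrt{\Lambda_0}+1)^2$, we have $\frac12\sqrt{C_0}\geq 4\sqrt n(\sqrt{\Lambda_0}+1)$, which beats $\sqrt{(n-1)\Lambda_0}$ with margin. So the growth factors $e^{C\sqrt{\Lambda_0}(\sqrt t+d)}$ are absorbed, leaving a net factor $e^{-c(\sqrt{\Lambda_0}+1)d}$. This is presumably exactly why $C_0$ was chosen this way.

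The remaining polynomial factors in $t$ and $d$ are integrable. Near $t=0$, the remaining Gaussian $e^{-d^2/(8C_1t)}$ with $d\geq 2$ kills every negative power of $t$. Near $t=\infty$, the remaining $e^{-C_0C_1t/2}$ kills the polynomial growth. The result is
\begin{equation*}
\Phi(x)\leq C|B|^{-1}e^{-c\,d(x,q)} .
\end{equation*}

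It then remains to integrate over $M\setminus 2B$:
\begin{equation*}
\int_{M\setminus 2B}|Tf|^p\leq C|B|^{p-1}|B|^{-p}\|f\|_{L^p}^p\int_{M\setminus 2B}e^{-cp\,d(x,q)}\,dx .
\end{equation*}
I would decompose $M\setminus 2B$ into annuli $2^{i+1}B\setminus 2^iB$. Theorem~\ref{thm2.1} gives $|2^{i+1}B|\leq C|B|e^{\sqrt{\Lambda_0}2^{i+1}}2^{n(i+1)}$. Again the constant $c$ inherited from $C_0$ must dominate $\sqrt{\Lambda_0}$, and here $p\geq1$ helps. Granting this, the sum is at most $C|B|$, and the factors of $|B|$ cancel to give \eqref{eqn:HC18_4}.

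The delicate bookkeeping is making sure the exponential decay margin survives both absorptions: once in $\Phi$, and once in the annular sum. If the margin is too tight, I would split the AM--GM gain more carefully between the two steps, since $\sqrt{C_0}$ exceeds $8\sqrt{\Lambda_0}$.
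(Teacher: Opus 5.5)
Your argument is correct, but it is organized differently from the paper's. The paper never normalizes by the fixed ball. For $x$ in the annulus $2^{i+1}B\setminus 2^iB$ it compares $V_x(\sqrt t)$ with $V_x(2^{i+2})\ge |2^{i+1}B|$, using $B_{2^{i+1}}(q)\subset B_{2^{i+2}}(x)$. This bounds the time integral by $C|2^{i+1}B|^{-1}2^{-2i}$, and the factor $e^{\sqrt{\Lambda_0}2^{i+2}}$ is killed by the same AM--GM split you use. Because the kernel bound is normalized by the volume of the ball containing the annulus, integrating over the annulus costs only $|2^{i+1}B|^{1-p}$. After H\"older the sum is $\sum_i 2^{-2ip}(|B|/|2^{i+1}B|)^{p-1}\le\sum_i 2^{-2ip}$. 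So the growth of the annuli is absorbed for free, and only polynomial decay in $i$ is needed; the exponential margin is spent once, inside the time integral.

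You instead normalize by $|B|$. This gives a stronger pointwise statement, $|Tf|(x)\le C|B|^{-1/p}\|f\|_{L^p}e^{-c\,d(x,q)}$ off $2B$. The price is that you must spend the exponential margin a second time, against the growth of $|2^{i+1}B|$. The bookkeeping you left as ``granting this'' does close. Let $a$ be the exponential rate in Theorem~\ref{thm2.1}. The first absorption allows any $c<\tfrac12\sqrt{C_0}-a$, and the second needs $cp>2a$. The worst case $p=1$ therefore requires $\sqrt{C_0}>6a$, which $C_0=64n(\sqrt{\Lambda_0}+1)^2$ satisfies. The paper's comparison up to radius $2^{i+2}$ uses $\sqrt{C_0}\ge 8a$, so your double absorption is not more demanding.

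One small point: the inclusion you need is $B_{\sqrt t+d+1}(x)\supset B_1(q)$, not $B_{\sqrt t+d}(x)\supset B_1(q)$. Your factor $(1+1/\sqrt t)^n$ already accounts for the difference.
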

	\begin{proof}
	Define $B^i := 2^{i+1}B \setminus 2^iB$. By direct calculation, the heat kernel upper bounds imply
	\begin{equation}
    \label{eq5.73}
		\begin{split}
			&\quad\int_{M\setminus 2B}\bigg|\int_0^\infty t^{-\frac{1}{2}}\int_{B}\mathcal{K}(x,y,t)f(y)dydt\bigg|^p dx \\
			&\leq C\int_{M\setminus 2B}\bigg(\int_0^\infty \int_{B}V^{-1}_x(\sqrt{t})t^{-1}e^{-C_0C_1t}e^{-\frac{d^2(x,y)}{C_1t}}|f(y)|dydt\bigg)^p dx \\
			& = C\sum_{i=1}^{\infty}\int_{B^i}\bigg(\int_0^\infty \int_{B}V^{-1}_x(\sqrt{t})t^{-1}e^{-C_0C_1t}e^{-\frac{d^2(x,y)}{C_1t}}|f(y)|dydt\bigg)^p dx.
		\end{split}
	\end{equation}
	For any $x \in B^i$ and $y \in B$, we have $d(x,y) \geq 2^{i-1}$. Hence,
	\begin{equation}
    \label{eq5.74}
		\begin{split}
			&\quad\int_0^\infty\int_{B}V^{-1}_x(\sqrt{t})t^{-1}e^{-C_0C_1t}e^{-\frac{d^2(x,y)}{C_1t}}|f(y)|dydt \\
			&\leq \int_0^\infty\int_{B}V^{-1}_x(\sqrt{t})t^{-1}e^{-C_0C_1t}e^{-\frac{2^{2i}}{4C_1t}}|f(y)|dydt \\
			&=\left\{\int_0^\infty V^{-1}_x(\sqrt{t})t^{-1}e^{-C_0C_1t}e^{-\frac{2^{2i}}{4C_1t}}dt \right\} \int_B |f| dy.
		\end{split}
	\end{equation}
    
    We claim that
    \begin{align}
    \label{eqn:HC21_2}
       \int_0^\infty V^{-1}_x(\sqrt{t})t^{-1}e^{-C_0C_1t}e^{-\frac{2^{2i}}{4C_1t}}dt
       \leq C|2^{i+1}B|^{-1}2^{-2i}. 
    \end{align}
	In fact, for $t \leq 2^{2(i+2)}$, by volume comparison and the fact that $B_{2^{i+1}}(q) \subset B_{2^{i+2}}(x)$, we obtain
	\begin{equation*}
		V^{-1}_x(\sqrt{t}) \leq Ce^{\sqrt{\Lambda_0}2^{i+2}} \left( \frac{t}{2^{2i}} \right)^{-n/2} V^{-1}_x(2^{i+2}) \leq Ce^{\sqrt{\Lambda_0}2^{i+2}} \left( \frac{t}{2^{2i}} \right)^{-n/2} |2^{i+1}B|^{-1}.
	\end{equation*}
	For $t \geq 2^{2(i+2)}$, we simply have
	\begin{equation*}
		V^{-1}_x(\sqrt{t}) \leq V^{-1}_x(2^{i+2}) \leq |2^{i+1}B|^{-1}.
	\end{equation*}
	Consequently, for the small time interval
	\begin{equation}\label{eq5.77}
		\begin{split}
			&\quad\int_0^{2^{2(i+2)}} V^{-1}_x(\sqrt{t})t^{-1}e^{-C_0C_1t}e^{-\frac{2^{2i}}{4C_1t}}dt \\
			& \leq C|2^{i+1}B|^{-1}2^{-2i} \int_0^{2^{2(i+2)}} \left( \frac{t}{2^{2i}} \right)^{-(n+2)/2} \exp \left\{ \sqrt{\Lambda_0}2^{i+2} - C_0C_1t - \frac{2^{2i}}{4C_1t} \right\} dt \\
			& \leq C|2^{i+1}B|^{-1}2^{-2i}.
		\end{split}
	\end{equation}
	By the Cauchy-Schwarz inequality,
	\begin{equation*}
		\frac{1}{2}C_0C_1t + \frac{2^{2i}}{8C_1t} \geq \frac{1}{2}\sqrt{C_0}2^i \geq \sqrt{\Lambda_0}2^{i+2}, 
	\end{equation*}
    we have
	\begin{equation*}
		\begin{split}
			& \quad\int_0^{2^{2(i+2)}} \left( \frac{t}{2^{2i}} \right)^{-(n+2)/2} \exp \left\{ \sqrt{\Lambda_0}2^{i+2} - C_0C_1t - \frac{2^{2i}}{4C_1t} \right\} dt \\
			& \leq  \int_0^{2^{2(i+2)}} \bigg(\frac{t}{2^{2i}}\bigg)^{-(n+2)/2}e^{-\frac{2^{2i}}{8C_1t}}e^{-\frac12C_0C_1t} dt\\
			& \leq C\int_0^{2^{2(i+2)}} e^{-\frac12C_0C_1t}dt\leq C, 
		\end{split}
	\end{equation*}
	where we use the fact 
    \begin{align*}
        \sup_{s>0} s^{-(n+2)/2}e^{-\frac{1}{8C_1s}} < C. 
    \end{align*}
    Thus, for the large time interval we have
	\begin{equation}
    \label{eq5.80}
		\begin{split}
			&\quad \int_{2^{2(i+2)}}^\infty V^{-1}_x(\sqrt{t})t^{-1}e^{-C_0C_1t}e^{-\frac{2^{2i}}{4C_1t}}dt\\ 
			&\leq |2^{i+1}B|^{-1} \int_{2^{2(i+2)}}^\infty t^{-1}e^{-C_0C_1t} dt 
			 \leq C|2^{i+1}B|^{-1}2^{-2(i+2)}.
		\end{split}
	\end{equation}
	Combining \eqref{eq5.77} and \eqref{eq5.80},  we obtain (\ref{eqn:HC21_2}) and finish the proof of the claim.

    Plugging (\ref{eqn:HC21_2}) into (\ref{eq5.74}) and then into (\ref{eq5.73}), we arrive at 
	\begin{equation}
    \label{eqn:HC21_6}
		\begin{split}
			&\quad\int_{M\setminus 2B}\bigg|\int_0^\infty t^{-\frac{1}{2}}\int_{B} 
            \mathcal{K}(x,y,t)f(y)dydt\bigg|^p dx \\
			& \leq C\sum_{i=1}^{\infty} \int_{B^i} \left( |2^{i+1}B|^{-1}2^{-2i} \int_B |f| dy \right)^p dx \\
			&\leq C \left\{ \sum_{i=1}^{\infty} \int_{B^i} |2^{i+1}B|^{-p} 2^{-2ip} dx \right\} 
            \cdot \left\{ \int_B |f| dy \right\}^{p}. 
		\end{split}
	\end{equation}
    Recall that
    \begin{align*}
        \int_{B^i} |2^{i+1}B|^{-p} 2^{-2ip} dx&=\int_{2^{i+1}B \backslash 2^i B} |2^{i+1}B|^{-p} 2^{-2ip} dx\\
        &=2^{-2ip}\frac{|2^{i+1}B \backslash 2^i B|}{|2^{i+1}B|^p} \leq 2^{-2ip} |2^{i+1}B|^{1-p}. 
    \end{align*}
    The H\"older inequality yields that
    \begin{align*}
        \left\{ \int_B |f| dy \right\}^{p} \leq \left\{ \int_B |f|^p dy \right\} |B|^{p-1}. 
    \end{align*}
    Combining the previous two inequalities with (\ref{eqn:HC21_6}), we obtain 
    \begin{align*}
        &\quad \int_{M\setminus 2B}\bigg|\int_0^\infty t^{-\frac{1}{2}}\int_{B} 
            \mathcal{K}(x,y,t)f(y)dydt\bigg|^p dx\\
        &\leq C \left\{ \sum_{i=1}^{\infty} 2^{-2ip}  \left(\frac{|B|}{|2^{i+1}B|} \right)^{p-1} \right\} 
            \cdot \left\{ \int_B |f|^p dy \right\}\\
        &\leq  C \left\{ \sum_{i=1}^{\infty} 2^{-2ip} \right\} 
            \cdot \left\{ \int_B |f|^p dy \right\}\\
        &\leq C \int_B |f|^p dy, 
    \end{align*}
    which is exactly (\ref{eqn:HC18_4}). 
	\end{proof}
	
	\section{Proof of Theorem \ref{thm1.4}}
	We first introduce the well-known lemma (cf.\cite{BDG23,Str83}).
	\begin{lemma}\label{lm6.1}
	Assume that $\|Rm\|\leq \Lambda_0$. Then there is a $\kappa_0=\kappa_0(n,\Lambda_0)>0$  such that 
	\begin{equation}
    \label{eqn:HC21_7}
		\begin{split}
			\|\nabla (\Delta_j+\kappa)^{-1/2}f\|_{L^2}\leq \|f\|_{L^2}
		\end{split}
	\end{equation}
	for any $\kappa \geq \kappa_0$ and all $f\in \Gamma_{C^\infty_c}(M,\Lambda^jT^*M)$.
	\end{lemma}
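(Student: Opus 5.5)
The plan is to prove the $L^2$ bound by an integration-by-parts (energy) argument combined with the Bochner--Lichnerowicz formula \eqref{eq2-10} and spectral calculus. Fix $f\in \Gamma_{C^\infty_c}(M,\Lambda^jT^*M)$ and set $u:=(\Delta_j+\kappa)^{-1/2}f$. Since $\Delta_j$ is essentially self-adjoint and nonnegative on the complete manifold $M$, the operator $(\Delta_j+\kappa)^{-1/2}$ is defined by the functional calculus. It is bounded on $L^2$ with norm at most $\kappa^{-1/2}$, and it maps $L^2$ into the form domain of $\Delta_j$. In particular $u\in L^2$ and
\begin{equation*}
\langle (\Delta_j+\kappa)u,u\rangle_{L^2}=\|(\Delta_j+\kappa)^{1/2}u\|_{L^2}^2=\|f\|_{L^2}^2 .
\end{equation*}

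Next I would use $\Delta_j=\nabla^*\nabla+V_j$, which gives, formally,
\begin{equation*}
\|\nabla u\|_{L^2}^2=\langle \nabla^*\nabla u,u\rangle=\langle(\Delta_j+\kappa)u,u\rangle-\langle (V_j+\kappa)u,u\rangle .
\end{equation*}
Since $|V_j|\le C(n)\Lambda_0$ pointwise, choosing $\kappa_0:=C(n)\Lambda_0$ makes $V_j+\kappa\ge 0$ as a fiberwise self-adjoint endomorphism for all $\kappa\ge\kappa_0$. Hence
\begin{equation*}
\|\nabla u\|_{L^2}^2\le \|f\|_{L^2}^2,
\end{equation*}
which is \eqref{eqn:HC21_7}.

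The only delicate step is justifying the identity $\langle\nabla^*\nabla u,u\rangle=\|\nabla u\|^2$ for $u$, which is not compactly supported. I would handle it by density. On a complete manifold, $C_c^\infty$ forms are a core for the quadratic form $q(u)=\langle(\Delta_j+\kappa)^{1/2}u,(\Delta_j+\kappa)^{1/2}u\rangle$; this is Gaffney/Chernoff essential self-adjointness, together with the boundedness of $V_j$, which makes $\Delta_j$ and $\nabla^*\nabla$ have the same form domain.

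So I would take $u_k\in C_c^\infty$ with $u_k\to u$ in the form norm. For each $u_k$, integration by parts gives the identity exactly, so that
\begin{equation*}
\|\nabla u_k\|^2+\langle (V_j+\kappa)u_k,u_k\rangle=q(u_k).
\end{equation*}
Then $\nabla u_k$ is Cauchy in $L^2$, and its limit is the distributional $\nabla u$. Passing to the limit yields the inequality for $u$, and by density of $C_c^\infty$ in $L^2$ the bound extends to all $f$.
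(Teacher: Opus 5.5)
Your proposal is correct and follows the paper's proof. The key step is the same: apply the Bochner--Lichnerowicz identity $\|\nabla h\|_{L^2}^2=\langle(\Delta_j+\kappa)h,h\rangle-\langle(V_j+\kappa)h,h\rangle$ with $\kappa_0\ge\|V_j\|_{L^\infty}$ (so $\kappa_0\le C(n)\Lambda_0$) to $h=(\Delta_j+\kappa)^{-1/2}f$. Your density argument via essential self-adjointness makes explicit the integration by parts for non-compactly supported $h$, which the paper simply asserts for $h\in W^1_2$.
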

	\begin{proof}
		By \eqref{eq2-10}, $\Delta_j=\nabla^*\nabla +V_j$. 
        For any $h \in W_2^1(M,\Lambda^jT^*M)$, 
        we have
		\begin{equation*}
			\begin{split}
				\|\nabla h\|^2_{L^2}&=\langle \nabla h,\nabla h\rangle
				 =\langle \nabla^*\nabla h, h\rangle
				 =\langle (\Delta_j+\kappa)h ,h\rangle-\langle (V_j+\kappa)h,h\rangle. 
			\end{split}
		\end{equation*}
		Since we choose $\kappa \geq \kappa_0 :=\|V_j\|_{L^\infty}$, the last term is nonnegative and can be dropped. Thus, we have
        \begin{align*}
            \|\nabla h\|^2_{L^2} \leq \langle (\Delta_j+\kappa)h ,h\rangle
            =\|(\Delta_j+\kappa)^{1/2}h\|^2_{L^2}.
        \end{align*}        
		Taking $h=(\Delta_j+\kappa )^{-1/2}f$, we arrive at (\ref{eqn:HC21_7}). 
	\end{proof}
    
	We are now ready to prove Theorem \ref{thm1.4}.
	\begin{theorem}\label{thm6.2}
		Let $M$ be an $n$-manifold with $\|Rm\|\leq \Lambda_0$. Then for each $p\in (1,\infty)$ there exists a $\kappa_0=\kappa_0(n,p,\Lambda_0)$ such that for every $\kappa\geq \kappa_0$ and $j \in \{0,\cdots,n\}$, we have
        \begin{align}
            \|\nabla (\Delta_j+\kappa)^{-\frac12}\|_{p,p}<C(n,p,\Lambda_0, \kappa)<\infty. 
        \label{eqn:HC21_8}    
        \end{align}
	\end{theorem}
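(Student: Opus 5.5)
We split into the ranges $p=2$, $p>2$ and $1<p<2$. For $p=2$ the bound is Lemma~\ref{lm6.1}. For $p>2$ we realize $T:=\nabla(\Delta_j+\kappa)^{-1/2}$ as a Riesz type operator in the sense of Definition~\ref{def5.1}. By the subordination formula in the introduction,
\begin{equation*}
T f(x)=\Gamma(\tfrac12)^{-1}\int_0^\infty t^{-\frac12}\int_M e^{-\kappa t}\nabla_x H_j(x,y,t)f(y)\,dy\,dt ,
\end{equation*}
so we set $\mathcal{K}(x,y,t):=\Gamma(\frac12)^{-1}e^{-\kappa t}\nabla_xH_j(x,y,t)$. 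Theorem~\ref{thm4-3}(i) gives
\begin{equation*}
|\mathcal{K}|\le C_1 t^{-\frac12}V_x^{-1}(\sqrt t)\,e^{(C_2\Lambda_0-\kappa)t}e^{-d^2/(C_2t)} .
\end{equation*}
Choosing $\kappa_0\ge C_2\Lambda_0+C_0C_2$ with $C_0=64n(\sqrt{\Lambda_0}+1)^2$ (so $\kappa_0\sim C(n)(\Lambda_0+1)$) produces condition (a) with the constant $C_1$ of Definition~\ref{def5.1} taken equal to $C_2$. Theorem~\ref{thm4-3}(ii) gives condition (b) in the same way, with the same $\rho,\alpha$. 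The $L^2$ bound \eqref{eq5.3} is Lemma~\ref{lm6.1}, after enlarging $\kappa_0$ to $\ge\|V_j\|_\infty$.

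Next we globalize Theorem~\ref{thm5.2}, which only handles $f$ supported in a unit ball $B=B_1(q)$. Pick a maximal $1$-separated set $\{q_k\}$ in $M$. By volume comparison (Theorem~\ref{thm2.1}) the balls $B_1(q_k)$ cover $M$ with bounded multiplicity $m=m(n,\Lambda_0)$. Take a partition $f=\sum_k f_k$ with $\mathrm{supp}f_k\subset B_1(q_k)$ and $\sum_k|f_k|^p\le|f|^p$, for instance by disjointifying the cover. Pointwise
\begin{equation*}
|Tf|\le \sum_k|Tf_k| .
\end{equation*}
For $x\in M$ we split this sum into the $k$ with $d(x,q_k)\le 4$, at most $m'(n,\Lambda_0)$ of them, and the remaining terms. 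The near part is handled by H\"older's inequality together with Proposition~\ref{prop5.3}, summed over $k$. The far part is handled by the pointwise kernel decay computed in Proposition~\ref{lm5.9}: for $d(x,q_k)\sim 2^i$,
\begin{equation*}
|Tf_k(x)|\le C|B_{2^{i+1}}(q_k)|^{-1}2^{-2i}\|f_k\|_{L^1}.
\end{equation*}
Combining this with H\"older and $\|f_k\|_{L^1}\le |B_1(q_k)|^{1-1/p}\|f_k\|_{L^p}$, a Schur-test argument gives
\begin{equation*}
\Big\|\sum_{\text{far}}|Tf_k|\Big\|_{L^p}^p\le C\sum_k\|f_k\|_p^p .
\end{equation*}
The point is that the weights $2^{-2i}$ beat the exponential volume growth. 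This is exactly where the factor $e^{-C_0C_1t}$ was used in \eqref{eqn:HC21_2}, giving $|2^{i+1}B|^{-1}2^{-2i}$ with no $e^{c2^i}$ loss. Together these give $\|Tf\|_p\le C\|f\|_p$ for $2<p<\infty$.

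For $1<p<2$ we could invoke Theorem~\ref{BDG23}, but it assumes a bound on $\nabla Rm$, so we avoid it. Instead we argue by weak $(1,1)$ plus interpolation. The kernel bounds (a)–(b) give a H\"ormander-type condition in the $y$ variable only after passing to the adjoint, so it is cleaner to use the pointwise smoothness in $x$ differently. The route we would take first is the standard Calder\'on–Zygmund decomposition on the locally doubling space, restricted to scales $\le 1$ (large scales are controlled by the exponential decay as in Proposition~\ref{lm5.9}). The difficulty is that the bad part needs regularity of $\mathcal{K}$ in $y$, whereas Theorem~\ref{thm4-3} provides regularity in $x$. To obtain $y$-regularity we would use the symmetry $H_j(x,y,t)=H_j(y,x,t)^*$ and the semigroup property
\begin{equation*}
\nabla_xH_j(x,y,t)=\int_M\nabla_xH_j(x,z,t/2)H_j(z,y,t/2)\,dz ,
\end{equation*}
then apply the H\"older estimate of Section~\ref{sec3} to $H_j(z,\cdot,t/2)$ as a heat solution in $y$. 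This gives $[\,\cdot\,]_\alpha$ bounds in $y$ of the same Gaussian form, and the Hörmander integral condition then follows by the same annulus decomposition as in Lemma~\ref{lm5.4}. Weak $(1,1)$ combined with the $L^2$ bound then yields $1<p<2$ by Marcinkiewicz. I expect this $p<2$ step, namely transferring the regularity to the $y$ variable and handling the bundle-valued (parallel transport) H\"ormander condition, to be the main obstacle. The $p>2$ range is mostly assembly of Section~5.
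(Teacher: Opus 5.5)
Your proposal is correct. For $p>2$ it has the same core as the paper: verify Definition~\ref{def5.1} from Theorem~\ref{thm4-3} and Lemma~\ref{lm6.1}, then apply Theorem~\ref{thm5.2} on unit balls. The difference is the globalization, and here your version is the one that actually closes. The paper writes $\|Tf\|_{L^p}\le\sum_i\|Tf_i\|_{L^p}\le C\sum_i\|f_i\|_{L^p}\le C\|f\|_{L^p}$. The last inequality fails for $p>1$: take $\|f_i\|_{L^p}\sim a_i$ with $(a_i)\in\ell^p\setminus\ell^1$, since only $\sum_i\|f_i\|_{L^p}^p\le C\|f\|_{L^p}^p$ holds. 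Even with $p$-th powers, one cannot add up the $\|Tf_i\|_{L^p}$ without off-diagonal control, and your near/far splitting supplies exactly that.

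Two points need fixing when you write it out. First, the near part lives on $B_4(q_k)\not\subset B_2(q_k)$, so you need all of Theorem~\ref{thm5.2} (Proposition~\ref{prop5.3} together with Proposition~\ref{lm5.9}), not Proposition~\ref{prop5.3} alone. Second, and more importantly, polynomial weights $2^{-2i}$ do not beat exponential volume growth. With the kernel bound in the form $|B_{2^{i+1}}(q_k)|^{-1}2^{-2i}$, the Schur row sum $\sum_k|B_1(q_k)|\,|B_{2^{i+1}}(q_k)|^{-1}$ over the $q_k$ at distance $\sim 2^i$ from $x$ is in general only controlled via Bishop--Gromov, with a loss $e^{c2^i}$. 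Use instead the bound $C\,V_x(2^{i+2})^{-1}2^{-2i}\|f_k\|_{L^1}$, which is what the proof of \eqref{eqn:HC21_2} produces before $V_x(2^{i+2})$ is replaced by $|2^{i+1}B|$. Then the row sum is at most $m\,2^{-2i}$ per dyadic level, by bounded overlap of the $B_1(q_k)\subset B_{2^{i+2}}(x)$. The column integral of $V_x(2^{i+2})^{-1}$ over $\{d(x,q_k)<2^{i+1}\}$ is at most $1$, because $B_{2^{i+2}}(x)\supset B_{2^{i+1}}(q_k)$ there. Alternatively, keep the discarded half of the AM--GM step, which leaves a factor $e^{-c2^i}$, and enlarge $\kappa$.

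For $1<p<2$ the routes genuinely differ. The paper only sketches this step, following Coulhon--Duong. There the bad part $b_i$ is replaced by $(I-e^{-r_i^2\Delta_j})b_i$, so one needs only the Gaussian bound \eqref{eqA11} on $\nabla_xH_j$: no regularity in $y$, and no cancellation condition for a bundle-valued $b_i$. Your classical H\"ormander route is also sound and needs no more input. From $\nabla_xH_j(x,y,t)=\int_M\nabla_xH_j(x,z,t/2)H_j(z,y,t/2)\,dz$, the symmetry $H_j(z,y,t)=H_j(y,z,t)^*$ and \eqref{eqA11}, you get a Lipschitz bound in $y$ of order $\frac{d(y,y_0)}{\sqrt t}\,t^{-1/2}V_{y_0}^{-1}(\sqrt t)e^{ct}e^{-d^2(x,y_0)/(ct)}$ for $d(y,y_0)\le\sqrt t$. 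Splitting the time integral at $t=d(y,y_0)^2$ then gives the H\"ormander constant once $\kappa$ absorbs $e^{ct}$, so the H\"older part of Theorem~\ref{thm4-3} is not needed. The price is that the cancellation of $b_i$ must be defined by parallel transport to the center of its ball. Since collapse is allowed, these balls may meet the cut locus, so you need a measurable choice of minimizing geodesics. That is harmless, because Definition~\ref{def4.1} takes the supremum over all of them. The paper's route is shorter given the literature; yours is more self-contained.
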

	
	\begin{proof}
   We divide the proof into the following three steps.
    
     \textbf{Step 1: $p=2$.}
     
   By Lemma \ref{lm6.1}, we get the theorem for $p=2$.
   
    \textbf{Step 2: $1<p<2$.}

 In Theorem \ref{thm4-3}, we obtain the $C^1$ estimate \eqref{eqA11} for the heat kernel of $\Delta_j$. Following the methodology in \cite[Theorem 1.2]{CD99}, which was also adopted in \cite[Corollary 1.4]{BDG23}, one can show  that  $\nabla (\Delta_j+\kappa)^{-1/2}$ is of weak $(1,1)$ for $\kappa$ large enough, i.e.,
 \begin{equation*}
     \mu(\{\nabla (\Delta_j+\kappa)^{-1/2}f|>\lambda\})\leq \frac{C}{\lambda}\|f\|_{L^1}
 \end{equation*}
 for some $C=C(n,\Lambda_0,\kappa)>0$.
 By Marcinkiewicz interpolation theorem and Lemma \ref{lm6.1}, the theorem holds for $1<p<2$. We omit the details in this paper.
    
     \textbf{Step 3: $p>2$.}
     
		Let $H_j(x,y,t)$ be the heat kernel of $\Delta_j$ on $M$. If $\kappa$ is large enough($\kappa>C(n)(\Lambda_0+1)$ for example), by Theorem \ref{thm4-3} and Lemma \ref{lm6.1}, the operator
		\begin{equation*}
			\begin{split}
				Tf(x):=  \nabla (\Delta_j+\kappa)^{-\frac12}f(x) =\int_0^\infty t^{-\frac12}\int_M e^{-\kappa t}\nabla_x H_j(x,y,t)f(y)dydt
			\end{split}
		\end{equation*}
		is a Riesz type operator. 
		
		Since $M$ has bounded Ricci curvature, there exists $\{x_i\}_{i\in \IN}\subset M$ such that the balls $B_{1}(x_i)$  cover $M$, while the balls $B_{\frac{1}{2}}(x_i)$ are disjoint. Moreover, $B_1(x_i)$ has the finite intersection property. Let $\phi_i$ be a partition of unity subordinate to the covering $\{B_1(x_i)\}_{i=1}^{\infty}$. 
        Fix $f$ and set $f_i :=\phi_i f$. By the finite intersection property, there is a constant $C>0$ such that 
		\begin{equation*}
			C^{-1}\|f\|_{L^p}\leq \sum_i\|f_i\|_{L^p}\leq C\|f\|_{L^p}.
		\end{equation*}
		By Theorem \ref{thm5.2}, we have 
		\begin{equation*}
			\|Tf_i\|_{L^p}\leq  C\|f_i\|_{L^p}.
		\end{equation*}
		Summing them up, we obtain
		\begin{equation*}
			\|Tf\|_{L^p}\leq \sum_i\|Tf_i\|_{L^p}\leq C\sum_i\|f_i\|_{L^p}\leq C\|f\|_{L^p}.
		\end{equation*}
        By the arbitrary choice of $f$, this finishes the proof of (\ref{eqn:HC21_8}). 
	\end{proof}

\end{document}